\title{Notes on the coexistence of limit notions}
\author{Takashi Yamazoe}
\address{Graduate School of System Informatics, Kobe University,
	Rokko--dai 1--1, Nada--ku, 657--8501 Kobe, Japan}
\email{212x502x@cloud.kobe-u.jp}
\begin{document}

	\begin{abstract}
	We summarize the current knowledge on the three limit notions: ultrafilter-limits, closed-ultrafilter-limits and FAM-limits. Also, we consider the possibility to perform an iteration which has all the three limits and clarify the problem we face. 
\end{abstract}
	\maketitle
	
%

	\section{Introduction}\label{sec_int}
	\subsection{Cicho\'n's maximum}
Cicho\'n's maximum is a model where all the cardinal invariants in Cicho\'n's diagram (except for the two dependent numbers $\addm$ and $\cofm$) are pairwise different. Goldstern, Kellner and Shelah \cite{GKS} proved that $\aleph_1<\addn<\covn<\bb<\nonm<\covm<\dd<\nonn<\cofn<2^{\aleph_0}$ consistently holds assuming four strongly compact cardinals.
Later, they and Mej\'{\i}a \cite{GKMS} eliminated the large cardinal assumption and hence proved that Cicho\'n's maximum is consistent with ZFC.
We study what kinds of cardinal invariants can be added to Cicho\'n's maximum with distinct values. In this article, we deal with the following numbers:


\begin{dfn}
	\begin{itemize}
		
		\item A pair $\pi=(D,\{\pi_n:n\in D\})$ is a predictor if $D\in\ooo$ and each $\pi_n$ is a function $\pi_n\colon\omega^n\to\omega$. $Pred$ denotes the set of all predictors.
		\item $\pi\in Pred$ predicts $f\in\oo$ if $f(n)=\pi_n(f\on n)$ for all but finitely many $n\in D$. 
		$f$ evades $\pi$ if $\pi$ does not predict $f$.
		\item The prediction number $\pre$ and the evasion number $\ee$ are defined as follows\footnote{While the name ``prediction number'' and the notation ``$\pre$'' are not common, we use them in this article.}:
		\begin{gather*}
			\mathfrak{pr}\coloneq\min\{|\Pi|:\Pi\subseteq Pred,\forall f\in\oo~\exists\pi\in \Pi~ \pi \text{ predicts } f\},\\
			\mathfrak{e}\coloneq\min\{|F|:F\subseteq\oo,\forall \pi\in Pred~\exists f\in F~ f\text{ evades }\pi\}.
		\end{gather*}
		
	\end{itemize}
\end{dfn}
There are variants of the evasion/prediction numbers.
\begin{dfn}
	\label{dfn_variant}
	\begin{enumerate}
		\item A predictor $\pi$ \textit{bounding-predicts} $f\in\oo$ if $f(n)\leq\pi(f\on n)$ for all but finitely many $n\in D$.
		$\preb$ and $\eeb$ denote the prediction/evasion number respectively with respect to the bounding-prediction.
		\item Let $g\in\left(\omega+1\setminus2\right)^\omega$. ( ``$\setminus2$'' is required to exclude trivial cases.)
			\textit{$g$-prediction} is the prediction where the range of functions $f$ is restricted to $\prod_{n<\omega}g(n)$ and $\pre_g$ and $\ee_g$ denote the prediction/evasion number respectively with respect to the $g$-prediction.
			Namely,
			\begin{gather*}
				\mathfrak{pr}_g\coloneq\min\{|\Pi|:\Pi\subseteq Pred,\forall f\in\textstyle{\prod_{n<\omega}g(n)}~\exists\pi\in \Pi~\pi \text{ predicts } f\},\\
				\mathfrak{e}_g\coloneq\min\{|F|:F\subseteq\textstyle{\prod_{n<\omega}g(n)},\forall \pi\in Pred~\exists f\in F~ f\text{ evades }\pi\}.
			\end{gather*}
			Define:
			\begin{gather*}
				\mathfrak{pr}_{ubd}\coloneq\sup\left\{\mathfrak{pr}_g:g\in\left(\omega\setminus2\right)^\omega\right\},\\
				\ee_{ubd}\coloneq\min\left\{\ee_g:g\in\left(\omega\setminus2\right)^\omega\right\}.
			\end{gather*}

		\end{enumerate}
		
\end{dfn}
$\mathcal{E}$ denotes the $\sigma$-ideal generated by closed null sets and we define the four numbers $\add(\mathcal{E}),\non(\mathcal{E}),\cov(\mathcal{E})$ and $\cof(\mathcal{E})$ in the usual way. 

The above cardinal invariants can be added to Cicho\'n's diagram as in Figure \ref{fig_Cd_E}.
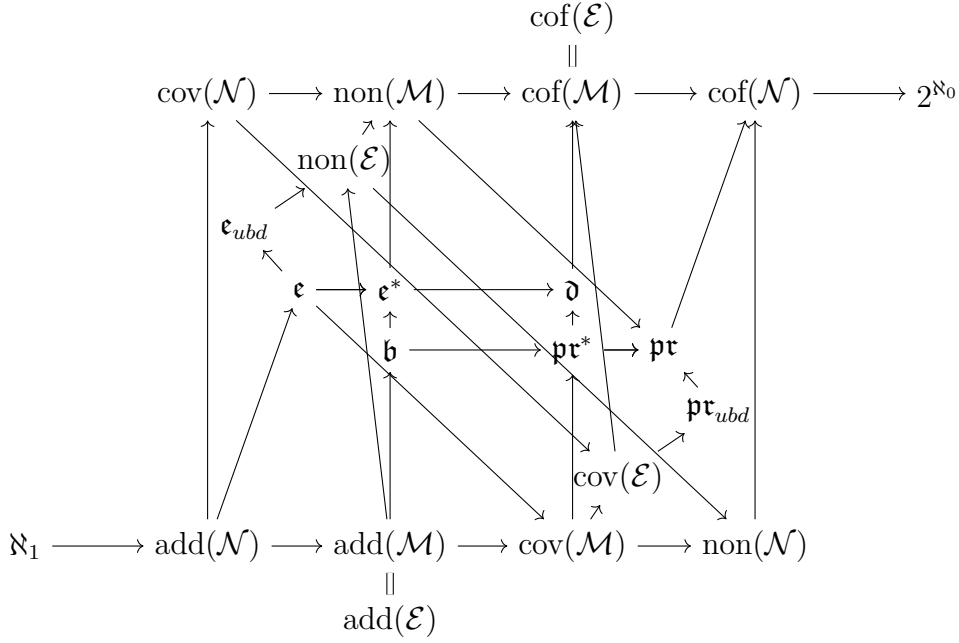
\begin{figure}
	\centering
	\begin{tikzpicture}
		\tikzset{
			textnode/.style={text=black}, 
		}
		\tikzset{
			edge/.style={color=black, thin}, 
		}
		\tikzset{cross/.style={preaction={-,draw=white,line width=7pt}}}
		\newcommand{\w}{2.4}
		\newcommand{\h}{2.0}
		
		\node[textnode] (addN) at (0,  0) {$\addn$};
		\node[textnode] (covN) at (0,  \h*3) {$\covn$};

		\node[textnode] (addM) at (\w,  0) {$\addm$};
		\node[textnode] (addE) at (\w,  -0.5*\h) {$\add(\mathcal{E})$};
		
		\node[textnode] (b) at (\w,  1.3*\h) {$\bb$};
		\node[textnode] (nonM) at (\w,  \h*3) {$\nonm$};
		
		\node[textnode] (covM) at (\w*2,  0) {$\covm$};
		\node[textnode] (d) at (\w*2,  1.7*\h) {$\dd$};
		\node[textnode] (cofM) at (\w*2,  \h*3) {$\cofm$};
		\node[textnode] (cofE) at (\w*2,  3.5*\h) {$\cof(\mathcal{E})$};

		\node[textnode] (nonN) at (\w*3,  0) {$\nonn$};
		\node[textnode] (cofN) at (\w*3,  \h*3) {$\cofn$};
		
		\node[textnode] (aleph1) at (-\w,  0) {$\aleph_1$};
		\node[textnode] (c) at (\w*4,  \h*3) {$2^{\aleph_0}$};
		
		\node[textnode] (e) at (0.5*\w,  1.7*\h) {$\mathfrak{e}$}; 
		\node[textnode] (pr) at (2.5*\w,  1.3*\h) {$\mathfrak{pr}$}; 
		
		\node[textnode] (estar) at (\w,  1.7*\h) {$\ee^*$};
		\node[textnode] (prstar) at (\w*2,  1.3*\h) {$\mathfrak{pr}^*$};

		\node[textnode] (eubd) at (\w*0.2,  \h*2.1) {$\ee_{ubd}$};
		\node[textnode] (prubd) at (\w*2.8,  \h*0.9) {$\mathfrak{pr}_{ubd}$};
		
		\node[textnode] (nonE) at (\w*0.75,  \h*2.55) {$\non(\mathcal{E})$};
		\node[textnode] (covE) at (\w*2.25,  \h*0.45) {$\cov(\mathcal{E})$};

		\draw[->, edge] (addM) to (nonE);
		\draw[->, edge] (covE) to (cofM);		
		\draw[->, edge] (covM) to (covE);		
		\draw[->, edge] (e) to (estar);
		\draw[->, edge] (e) to (covM);
		\draw[->, edge] (addM) to (b);
		\draw[->, edge] (e) to (estar);
		\draw[->, edge] (b) to (prstar);
		\draw[->, edge] (nonM) to (pr);
		\draw[->, edge] (prstar) to (pr);
		
		\draw[->, edge] (addN) to (covN);
		\draw[->, edge] (addN) to (addM);
		\draw[->, edge] (covN) to (nonM);	
		
		\draw[->, edge] (addM) to (covM);
		\draw[->, edge] (nonM) to (cofM);
		\draw[->, edge] (d) to (cofM);
		
		\draw[->, edge] (covM) to (nonN);
		\draw[->, edge] (cofM) to (cofN);
		\draw[->, edge] (nonN) to (cofN);
		\draw[->, edge] (aleph1) to (addN);
		\draw[->, edge] (cofN) to (c);
		
		
		\draw[->, edge] (addN) to (e);
		
		\draw[->, edge] (covM) to (prstar);
		
		\draw[->, edge] (pr) to (cofN);

		\draw[->, edge] (b) to (estar);C
		\draw[->, edge] (estar) to (nonM);
		\draw[->, edge] (estar) to (d);
		\draw[->, edge] (e) to (eubd);
		
		\draw[->, edge] (prstar) to (d);
		\draw[->, edge] (prstar) to (pr);

		\draw[->, edge] (prubd) to (pr);

		\draw[->, edge,cross] (eubd) to (nonE);
		\draw[->, edge] (nonE) to (nonM);
		
		\draw[->, edge] (covE) to (prubd);
		\draw[->, edge] (nonE) to (nonN);
		\draw[->, edge] (covN) to (covE);

		\draw[double distance=2pt, edge] (addM) to (addE);
		\draw[double distance=2pt, edge] (cofM) to (cofE);

	\end{tikzpicture}
	\caption{Cicho\'n's diagram and other cardinal invariants.}\label{fig_Cd_E}
\end{figure}
	\subsection{Limit notions}
The construction of Cicho\'n's maximum consists of two steps: the first one is to separate the left side of the diagram with additional properties and the second one is to separate the right side (point-symmetrically) using these properties. In \cite{GKS}, the large cardinal assumption was used in the second step to apply \textit{Boolean Ultrapowers}. In \cite{GKMS}, they introduced the \textit{submodel} method instead, which is a general technique to separate the right side without large cardinals.

Let us focus on the first step. The main work to separate the left side is \textit{to keep the bounding number $\bb$ small} through the forcing iteration, since the naive bookkeeping iteration to increase the cardinal invariants in the left side guarantees the smallness of the other numbers but not of $\bb$.
To tackle the problem, in \cite{GKS} they used the \textit{ultrafilter-limit} 
method, which was first introduced by Goldstern, Mej\'{\i}a and Shelah \cite{GMS16} to separate the left side of the diagram. 

\begin{thm}(\cite[Main Lemma 4.6]{GMS16})
	Ultrafilter-limits keep $\bb$ small.
\end{thm}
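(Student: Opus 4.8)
The plan is to read ``keeps $\bb$ small'' as the assertion that an iteration assembled from forcings carrying ultrafilter-limits preserves the unboundedness of a prescribed $\leq^*$-unbounded family, so that no dominating real is added over the witnessing set and $\bb$ remains at its intended value. Concretely, I would fix a non-principal ultrafilter $\ult$ on $\omega$ and recall the defining feature of an $\ult$-limit: for each ``linked'' component of the forcing $\mathbb{Q}$ and each sequence $\bar q=\langle q_n:n<\omega\rangle$ drawn from that component, there is a single condition $q^*=\lim_{\ult}\bar q$ forcing that the generic filter meets $\{q_n:n<\omega\}$ on a $\ult$-large set, i.e. $q^*\Vdash\{n:q_n\in\dot G\}\in\ult$. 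The whole argument then hinges on turning this $\ult$-largeness of the generic selection into control over the values of a new real.

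First I would reduce the preservation of unboundedness to a single-step combinatorial statement: given a $\mathbb{Q}$-name $\dot f\in\oo$ and a condition $p$, produce a ground-model function $g\in\oo$ and an extension of $p$ forcing $\exists^\infty n\,(g(n)>\dot f(n))$, so that $\dot f$ cannot dominate the family. To get such a $g$, I would, for each $n$, thin $p$ inside one linked component to a condition $q_n$ deciding a value of $\dot f$ at a suitably chosen argument, and let $g$ record these decided values. Taking $q^*=\lim_{\ult}\bar q$, the defining property forces that $\dot f$ agrees with the recorded values along a $\ult$-large, hence infinite, set of coordinates; reading this off against $g$ yields the required infinitely many reversals. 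This is the heart of the matter and is where the ultrafilter-limit does its work.

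Next I would run the induction along the finite-support iteration. At successor stages the single-step statement above, applied to the iterand, suffices. The genuinely delicate point, and the step I expect to be the main obstacle, is the limit stage: one must show that the ultrafilter-limit structure is inherited by the iteration itself, defining $\lim_{\ult}$ on names for conditions of $\mathbb{P}_\delta$ coordinatewise from the limits of the iterands while keeping the ``linked'' decomposition coherent across stages, and verifying that the resulting limit condition still forces the $\ult$-largeness of the generic selection. This coherence, together with a bookkeeping argument arranging, for each potential dominating name, a stage at which the preceding real is found non-dominated, is what lets the unboundedness of the witnessing family propagate to $\mathbb{P}_\delta$.

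Finally, once unboundedness is preserved through every stage, the prescribed family stays $\leq^*$-unbounded in the final model, so no dominating real has been added over it and $\bb$ takes the intended small value. I would phrase the preservation in the relational-system language, via an $\lcu$-type sequence for $\bb$, so that the single-step lemma and the limit-stage coherence combine cleanly into the global conclusion.
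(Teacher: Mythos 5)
You have the right germ---a limit condition forces infinitely many previously decided facts to hold simultaneously, contradicting domination---but the architecture you build around it does not prove the theorem, and the successor step of your induction is false as stated. The precise form of the statement (Theorem \ref{thm_uf_limit_keeps_b_small}, which the paper cites rather than proves) is that $\p_\gamma$ forces $C_{[\gamma]^{<\kappa}}\lq\mathbf{D}$: every $f\in\oo$ of the \emph{final} extension dominates fewer than $\kappa$ of the Cohen reals added \emph{by the iteration itself}; the witnesses are not a prescribed ground-model family. Your single-step lemma (``a $\sigma$-uf-lim-linked poset adds no real dominating the ground-model reals'') is correct but is the easy half, and it does not iterate: what is preserved by finite support iterations is $\theta$-$\mathbf{D}$-goodness (Section \ref{sec_RS_PT}), which is strictly stronger than ``adds no dominating real'', and $\sigma$-uf-lim-linkedness is not known to imply it---if it did, the whole ultrafilter-limit machinery would be superfluous. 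Concretely, ``no dominating real over $V^{\p_\xi}$'' does not imply that a fixed unbounded family from an earlier stage stays unbounded: Hechler forcing restricted to a countable model (an iterand of exactly the kind occurring in these constructions, and trivially $\sigma$-uf-lim-linked since singletons are uf-lim-linked) adds no dominating real, yet dominates every real of that model, killing the unboundedness of any family inside it. A warning sign is that your argument never uses the hypotheses that actually drive the proof: that $H$ is complete and of size $<\kappa$.

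The proof the paper points to (\cite[Main Lemma 4.6]{GMS16}, \cite[Lemma 1.31]{GKS}) is not a stage-by-stage preservation induction at all. Induction along the iteration is used only to construct the coherent ultrafilter names $\dot{D}^h_\xi$ (successor step: Lemma \ref{lem_uf_const_succ}; limit step: Lemma \ref{lem_uf_const_all}, where centeredness is needed---the very point stressed in Section \ref{sec_E}). Given these, one runs a single global argument for each $\p_\gamma$-name $\dot f$: assume $\kappa$-many Cohen reals $c_{\xi_i}$ are dominated, extract by $\kappa$-cc, pigeonhole and a $\Delta$-system countably many conditions $p_k\Vdash\forall n\geq m^*\ c_{\xi_k}(n)\leq\dot f(n)$ with one fixed $m^*$, the same finite Cohen stem $s^*$ at distinct increasing coordinates $\xi_k$, and all following a single guardrail $h\in H$ (this is where completeness of $H$ enters). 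The key idea you are missing is the \emph{diagonal} modification: replace $p_k$ by $r_k\leq p_k$ whose Cohen part forces $c_{\xi_k}(n^*)\geq k$ at one fixed $n^*\geq\max(m^*,|s^*|)$; since the $\xi_k$ are distinct coordinates, the modified sequence still follows one guardrail. Taking a single ultrafilter limit of $\langle r_k:k<\omega\rangle$---using clause \eqref{eq_constant} on the root and clause \eqref{eq_increasing} for the increasing coordinates, not a coordinatewise limit structure on all of $\p_\delta$ as you suggest---produces one condition forcing $\dot f(n^*)\geq k$ for infinitely many $k$, an outright contradiction at a single integer. Your variant, recording decided values of $\dot f$ into a ground-model $g$, cannot yield this quantitative ``$<\kappa$ many'' conclusion because the decided values are never made to grow along the sequence and the limit is taken inside a single iterand rather than across the iteration.
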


Recently, other kinds of limit notions have been studied.

After the first construction of Cicho\'n's maximum in \cite{GKS}, Kellner, Shelah and T{\u{a}}nasie \cite{KST} constructed\footnote{It was constructed under the same large cardinal assumption as \cite{GKS}. Later, the assumption was eliminated in \cite{GKMS} introducing the submodel method.} Cicho\'n's maximum for another order $\aleph_1<\addn<\bb<\covn<\nonm<\covm<\nonn<\dd<\cofn<2^{\aleph_0}$,  
introducing the \textit{FAM-limit}\footnote{While they did not use the name ``FAM-limit'' but ``strong FAM-limit for intervals'', we use ``FAM-limit'' in this article. 
Also, the original idea of the notion is from \cite{She00}.} method, which focuses on (and actually is short for) finitely additive measures on $\omega$ and keeps the bounding number $\bb$ small as the ultrafilter-limit does. Recently, Uribe-Zapata \cite{Uri} formalized the theory of the FAM-limits and he, Cardona and Mej\'{\i}a proved that the limits also control $\non(\mathcal{E})$:

\begin{thm}(\cite{Car23RIMS})
\label{thm_FAM_nonE}
FAM-limits keep $\non(\mathcal{E})$ small.
\end{thm}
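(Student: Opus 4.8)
The plan is to imitate the ultrafilter-limit argument for $\bb$ (the Theorem cited above from \cite{GMS16}) inside the relational-system preservation framework, replacing the purely combinatorial ``$\exists^\infty$/$\forall^\infty$'' control by the finer \emph{measure} control that a finitely additive measure provides. First I would pin down a combinatorial presentation of $\non(\mathcal{E})$. Since $\mathcal{E}$ is exactly the collection of sets coverable by an $F_\sigma$ null set, and the $F_\sigma$ null sets of \emph{interval type} are cofinal in $\mathcal{E}$, one codes a member of $\mathcal{E}$ by a pair $(\bar I,\bar J)$ where $\bar I=\langle I_k\rangle_{k<\omega}$ is an interval partition of $\omega$ and $J_k\subseteq 2^{I_k}$ with $\sum_k\bigl(1-|J_k|2^{-|I_k|}\bigr)=\infty$, the coded set being $N(\bar I,\bar J)=\{x\in 2^\omega:\forall^\infty k~x\on I_k\in J_k\}$. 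Writing $x\sqsubset(\bar I,\bar J)$ for $x\in N(\bar I,\bar J)$, the resulting relational system $\R=(2^\omega,\mathrm{codes},\sqsubset)$ satisfies $\non(\mathcal{E})=\mathfrak b(\R)$ and $\cov(\mathcal{E})=\mathfrak d(\R)$. Thus ``keeping $\non(\mathcal{E})$ small'' means exhibiting a fixed, small family $X\subseteq 2^\omega$ (of size $\aleph_1$, or the ground-model continuum) that remains $\R$-unbounded throughout the iteration: every iteration-name $(\dot{\bar I},\dot{\bar J})$ for a code is forced to omit some $x\in X$, i.e.\ $\exists^\infty k~x\on I_k\notin J_k$.

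Next I would isolate the single-step lemma that the FAM-limit is built to deliver. Fix a finitely additive measure $\Xi$ on $\omega$ and a poset $\mathbb{Q}$ with a strong FAM-limit for intervals in the sense of \cite{Uri}. Given a sequence $\langle q_n\rangle_{n<\omega}$ of conditions each deciding a sufficiently long initial segment of the name $(\dot{\bar I},\dot{\bar J})$, the limit condition $q=\lim_\Xi\langle q_n\rangle$ forces the measure of each clopen slice $[\dot J_k]$ to be approximated, uniformly, by the $\Xi$-average $\hat a_k:=\int \mu([\dot J_k])\,d\Xi$, which is computed in the ground model. Granting that this averaging does not destroy the divergence $\sum_k(1-\hat a_k)=\infty$, a routine second-Borel--Cantelli computation (the slices at distinct $I_k$ are independent) yields a ground-model $x\in 2^\omega$ with $x\on I_k\notin J_k$ for infinitely many $k$ on the averaged data, and the defining inequality of the FAM-limit transfers this to the forced statement $q\Vdash\exists^\infty k~x\on I_k\notin J_k$, that is $q\Vdash x\not\sqsubset(\dot{\bar I},\dot{\bar J})$. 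This is precisely the step where a finitely additive \emph{measure} is indispensable and an ultrafilter would not suffice: escaping a null set is a quantitative, summable condition on measures of clopen sets, whereas an ultrafilter only records a $0/1$ majority verdict, which is adequate only for the purely combinatorial invariant $\bb$.

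Finally I would lift the single-step lemma to the whole finite-support iteration. Following the preservation template of \cite{GMS16,GKMS}, one arranges the FAM-limits \emph{coherently} (continuously) along the iteration, so that limits at successor and at limit stages cohere, and then argues by induction on the length that the fixed family $X$ stays $\R$-unbounded: any name for a code appearing at some stage is reflected, via the coherent limits and the single-step lemma, to be omitted by a member of $X$. Since $\mathfrak b(\R)=\non(\mathcal{E})$, this bounds $\non(\mathcal{E})$ by $|X|$ in the final model.

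I expect the main obstacle to be the compatibility between the two measures involved: the finitely additive $\Xi$ on $\omega$ used to form the limit, and the countably additive Lebesgue measure on $2^\omega$ underlying $\mathcal{E}$. Concretely, the crux is proving that $\Xi$-averaging preserves the divergence $\sum_k(1-\hat a_k)=\infty$ and that avoidance of the averaged slices is genuinely \emph{forced} by the limit condition; this is the measure-theoretic estimate packaged into the definition of a strong FAM-limit for intervals, and it is strictly finer than what is needed to keep $\bb$ small. A secondary burden is the bookkeeping that keeps the FAM-limits coherent across the iteration while the measure $\Xi$ is itself extended at limit stages.
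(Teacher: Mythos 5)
A preliminary remark: the paper contains no proof of this theorem --- it is imported from \cite{Car23RIMS}, and the FAM-limit subsection explicitly omits all proofs, deferring to \cite{Uri}; the only checkable content here is the precise formulation, Theorem \ref{thm_FAM_keeps_nonE}. Measured against that, your first step is fine (the interval-type coding of $\mathcal{E}$-sets with $\sum_k(1-|J_k|2^{-|I_k|})=\infty$ is indeed the Bartoszy\'nski--Shelah presentation underlying the cited work, and you correctly identify why a finitely additive measure, not an ultrafilter, is needed), but the core of your plan has the wrong logical form, and not reparably so. You propose to fix a ground-model family $X$ of size $\aleph_1$ (or of size the ground-model continuum) and show it stays outside $\mathcal{E}$. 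Theorem \ref{thm_FAM_keeps_nonE} instead bounds $\non(\mathcal{E})$ by $\kappa$, the width parameter of the FAM-iteration (the cardinal bounding the $\theta_\xi$'s and the size of the complete guardrail set $H$), and this is forced: the trivial stages of a $\kappa$-FAM-iteration may be arbitrary posets with dense subsets of size $<\kappa$, e.g.\ the restricted $\pr_g$'s of Construction \ref{con_P7}, which drive $\ee_{ubd}$, and hence $\non(\mathcal{E})\geq\ee_{ubd}$, up to $\kappa>\aleph_1$. The same already happens in the iteration of \cite{KST}, where FAM-limits are present yet $\non(\mathcal{E})\geq\bb>\aleph_1$ in the final model. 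So in exactly the iterations this theorem is built for, \emph{every} family of size $<\kappa$ --- in particular every ground-model family and every family of size $\aleph_1$ --- ends up inside $\mathcal{E}$; no fixed small ground-model $X$ can survive, and your single-step lemma (``the limit condition forces some ground-model $x$ to escape the named code'') is false as a statement about FAM-iterations.

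The actual argument is dual to yours. The witnesses are the generic reals $\langle x_i:i<\gamma\rangle$ added cofinally along the iteration itself (Cohen reals suffice, since $\mathcal{E}\subseteq\mathcal{M}$), and one proves the Tukey statement $C_{[\gamma]^{<\kappa}}\lq C_\mathcal{E}$ via Fact \ref{fac_Tukey_order_equivalence_condition}\eqref{item_small_equiv}: no single name $(\dot{\bar I},\dot{\bar J})$ for a code captures $\kappa$-many $x_i$. Assuming it does, one picks $\kappa$-many conditions $p_i\Vdash x_i\in N(\dot{\bar I},\dot{\bar J})$, uniformizes them (same guardrail, same capture threshold, $\Delta$-system) --- this is precisely where ``$\kappa$ regular'', ``$H$ complete'' and ``$|H|<\kappa$'' are used, hypotheses that never appear in your proposal --- extracts a countable subsequence sequentially following one guardrail, forms the limit condition, and derives a contradiction with $\sum_k(1-|J_k|2^{-|I_k|})=\infty$ by a counting argument. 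Two further points of your mechanism do not match the machinery. First, what $\lim^\Xi\bar q$ forces is a lower bound on $\int\dns_{I_k}(\dot{W}(\bar q))\,d\dot{\Xi}'$, i.e.\ on the $\dot{\Xi}'$-integrated densities of the \emph{index} set $\{m:q_m\in\dot G\}$ over an interval partition of the indices $m$; it is not an approximation of the Lebesgue measures of the named slices $[\dot J_k]$ by ground-model $\Xi$-averages. Second, your Borel--Cantelli construction of a ground-model escapee treats the name as if its interpretation were independent of the generic; it is not (a name can ``know'' any prescribed ground-model real), and this circularity is exactly why the proof must run through iteration generics and a contradiction rather than through a direct escape construction. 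Your instinct about where the measure-theoretic content sits (the verification that $\widetilde{\mathbb{E}}$ is $(\Xi,\bar I,\varepsilon)$-linked, Lemma \ref{lem_E_FAM}) is sound, but the preservation theorem itself cannot be proved along the route you outline.
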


The author introduced in \cite{Yam24} a new limit notion \textit{closed-ultrafilter-limit}, which is a specific kind of the ordinary ultrafilter-limit and proved that it controls $\ee^*$:

\begin{thm}(\cite[Main Lemma 3.2.4]{Yam24})
	Closed-ultrafilter-limits keep $\eeb$ small.
\end{thm}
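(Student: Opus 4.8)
The plan is to recast ``keeping $\eeb$ small'' as a preservation statement and prove it by a local ultrafilter-limit argument propagated through the iteration. Recall that $\eeb=\mathfrak{e}^*$ is the least size of a family $F\subseteq\oo$ such that no single predictor bounding-predicts every member of $F$; thus $\eeb$ is the unboundedness number of the relation ``$f$ is bounding-predicted by $\pi$'', and keeping it small means exhibiting, in the final extension, an evading family of size $\aleph_1$. I would take as candidate the sequence $\langle f_\xi:\xi<\omega_1\rangle$ of generic reals added cofinally along the (finite-support) iteration of closed-ultrafilter-limit forcings, and aim to show via the $\lcu$ machinery that for every predictor $\pi$ in the extension, cofinally many $f_\xi$ evade $\pi$. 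Since predictors are coded by reals and the iteration is ccc, a reflection/$\Delta$-system argument reduces this to the following local statement: given a stage $\alpha$ and a $\mathbb{P}_\alpha$-name $\dot\pi=(\dot D,\langle\dot\pi_n\rangle)$ for a predictor, some later generic real is forced to evade $\dot\pi$ in the bounding sense.

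The core is this local statement, and here the closed-ultrafilter-limit does the work. Fix the ultrafilter $\ult$ on $\omega$. By bookkeeping I would build a sequence $\langle q_n:n<\omega\rangle$ of conditions deciding longer and longer initial segments of both $\dot\pi$ and of the relevant generic real $f$, and then form the closed $\ult$-limit $q=\lim_{\ult}\langle q_n\rangle$. The defining feature of the closed-ultrafilter-limit is that $q$ forces the $\ult$-averaged behaviour of $f$ to realise a point of a prescribed closed set; I would choose that closed set to consist of reals whose values exceed the predictor's guesses on an infinite set. Because the $\ult$-limit of points lying in a closed set again lies in that set, the limit condition forces $f$ itself into this set, i.e.\ forces infinitely many $n\in\dot D$ with $f(n)>\dot\pi_n(f\on n)$, which is precisely that $f$ evades $\dot\pi$.

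To pass from one step to the whole iteration I would invoke the standard fact (as in the $\bb$-case of \cite{GMS16}) that a finite-support iteration of forcings carrying $\ult$-limits again carries $\ult$-limits, building compatible limit/master conditions stage by stage so that the evasion established locally is forced at the final stage. Counting over the cofinally many $\xi<\omega_1$ at which the relevant reals appear, the $\lcu$ framework then yields that $\langle f_\xi:\xi<\omega_1\rangle$ remains an evading family, so $\eeb=\aleph_1$ holds in the extension.

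The hard part will be verifying the defining property of the closed-ultrafilter-limit for bounding-prediction specifically. Unlike plain evasion, bounding-prediction compares $f(n)$ against $\dot\pi_n(f\on n)$, whose value depends on the generic initial segment $f\on n$; as $n$ grows the target moves, so the sequence $\langle q_n\rangle$ and the closed set must be arranged so that the $\ult$-limit simultaneously pins down enough of $f\on n$ to evaluate $\dot\pi_n(f\on n)$ and forces $f(n)$ above it, on a $\ult$-large set of coordinates. Making this compactness/closedness bookkeeping work uniformly over the name $\dot\pi$---so that the ultralimit genuinely lands in the intended closed set rather than merely avoiding eventual domination---is the technical heart of the argument.
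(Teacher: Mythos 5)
There is a genuine gap --- in fact two, and each is fatal on its own. First, your proposal rests on a misreading of what ``closed'' means in ``closed-ultrafilter-limit''. In Definition~\ref{dfn_UF_linked}, $Q\subseteq\p$ is c-$D$-lim-linked iff the limit map satisfies $\ran(\lim^D)\subseteq Q$: the $D$-limit of a sequence of \emph{conditions} from the linked component $Q$ is again a condition in $Q$. This is a property of a piece of the poset, not of any set of reals. There is no mechanism by which the limit condition ``forces the generic real to realise a point of a prescribed closed set'', and the step ``the $\ult$-limit of points lying in a closed set again lies in that set'' --- besides failing in general for closed non-compact subsets of $\oo$, where pointwise $D$-limits need not even exist --- is not the defining feature of the notion, so the core of your local argument invokes machinery that does not exist. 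What the closedness actually buys (see, e.g., the proof of Lemma~\ref{lem_E_cuf}: the limit $q^\infty$ of conditions in $Q_{t,\varepsilon}$ lies again in $Q_{t,\varepsilon}$, with the same stem) is precise knowledge of which constraints the limit condition \emph{itself} places on the generic object at the active coordinate; this extra information, unavailable for plain uf-limits, is what the bounding-prediction argument needs.

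Second, your reduction to the local statement ``given a $\p_\alpha$-name $\dot\pi$ of a predictor, some later generic real evades it'' cannot prove the theorem. The precise content (Theorem~\ref{thm_Main_Lemma}, via Fact~\ref{fac_Tukey_order_equivalence_condition}\eqref{item_small_equiv}) is that $\p_\gamma$ forces the existence of a family $\langle f_\xi:\xi<\gamma\rangle$ such that \emph{every} predictor in the final extension bounding-predicts fewer than $\kappa$-many $f_\xi$, where $\kappa<\gamma$; any $\kappa$-sized subfamily is then evading. Since $\kappa<\gamma$, predictors keep appearing at stages after \emph{all} members of such a subfamily (indeed a predictor may be a $\p_\gamma$-name), and for those predictors genericity of ``later'' reals says nothing --- handling responses that appear \emph{after} the challenges is exactly what the limit machinery is for. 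The actual argument is by contradiction: if some condition forces that a single $\dot\pi$ bounding-predicts $\kappa$-many $f_\xi$, one picks witnessing conditions $p_\xi$ deciding the relevant finite data, uses regularity of $\kappa$ together with completeness of $H$ and $|H|<\kappa$ to thin out to a countable subsequence following a single guardrail with uniform data, takes the closed-ultrafilter-limit of that subsequence with respect to the ultrafilter names $\dot{D}^h_\xi$ built recursively along the iteration (Definition~\ref{dfn_UF_iteration}, Lemma~\ref{lem_uf_const_all}) --- not a single ground-model ultrafilter $\ult$ --- and derives a contradiction from the fact that the limit condition forces $\{m:p_{\xi_m}\in\dot{G}_\gamma\}$ into an ultrafilter, hence to be infinite. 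That the hypotheses ``$\kappa$ regular'', ``$H$ complete'', ``$|H|<\kappa$'' never enter your proposal is a symptom that this counting/thinning step, the heart of the proof, is missing.
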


In \cite{Yam24}, the author also performed an iteration which has both ultrafilter-limits and closed-ultrafilter-limits and controlled both $\bb$ and $\eeb$. Thus, the following natural question arises:

\begin{ques}
	\label{ques_3_mix}
	Can we perform an iteration which has \textit{all the three limit notions}?
\end{ques}
If such an iteration is possible, we can control $\bb$, $\eeb$ and $\non(\mathcal{E})$ simultaneously and finally obtain a new separation constellation (see Figure \ref{fig_p7} in Section \ref{sec_E}).

However, at the moment we cannot answer Question \ref{ques_3_mix} positively, but we are close to achieving the construction.

In this article, we study the possibility of the construction. Also, this article is a detailed analysis of \cite[Question 5.4]{Yam24}.
	\subsection{Structure of the article}
In Section \ref{sec_RS_PT}, we review the relational systems, the Tukey order and the general preservation theory of fsi (finite support iteration), such as \textit{goodness}. In Section \ref{sec_uflimit}, we present the three limit notions: \textit{ultrafilter-limit}, \textit{closed-ultrafilter-limit} and \textit{FAM-limit}.
In Section \ref{sec_E}, we analyze the possibility to construct an iteration with all the three limit notions and clarify where the problem lies.
	
	\section{Relational systems and preservation theory} \label{sec_RS_PT}
	\begin{dfn}
	
	\begin{itemize}
		\item $\R=\langle X,Y,\sqsubset\rangle$ is a relational system if $X$ and $Y$ are non-empty sets and $\sqsubset \subseteq X\times Y$.
		\item We call an element of $X$ a \textit{challenge}, an element of $Y$ a \textit{response}, and ``$x\sqsubset y$''  ``$x$ is \textit{responded by }$y$''.
		\item $F\subseteq X$ is $\R$-unbounded if no response responds all challenges in $X$.
		\item $F\subseteq Y$ is $\R$-dominating if any challenge is responded by some challenge in $Y$.
		\item $\R$ is non-trivial if $X$ is $\R$-unbounded and $Y$ is $\R$-dominating. For non-trivial $\R$, define
		\begin{itemize}
			\item $\bb(\R)\coloneq\min\{|F|:F\subseteq X \text{ is }\R\text{ -unbounded}\}$, and
			\item $\dd(\R)\coloneq\min\{|F|:F\subseteq Y \text{ is }\R\text{ -dominating}\}$.
		\end{itemize}

	\end{itemize}
	
\end{dfn}
In this section, we assume $\R$ is non-trivial.

\begin{exa}
	\label{exa_RS}
	\begin{itemize}
		\item $\mathbf{D}\coloneq\langle \oo,\oo,\leq^*\rangle$. Note $\bb(\mathbf{D})=\bb, \dd(\mathbf{D})=\dd$.
		\item 
		$\mathbf{PR}\coloneq\langle\oo, Pred,\sqsubset^\mathrm{p}\rangle$, where $f\sqsubset^\mathrm{p}\pi\colon\Leftrightarrow f$ is predicted by $\pi$.
		Also, $\mathbf{BPR}\coloneq\langle\oo, Pred,\sqsubset^\mathrm{bp}\rangle$, where $f\sqsubset^\mathrm{bp}\pi\colon\Leftrightarrow f$ is bounding-predicted by $\pi$ and $\mathbf{PR}_g\coloneq\langle\prod_{n<\omega}g(n), Pred,\sqsubset^\mathrm{p}\rangle$  where $g\in(\omega+1\setminus2)^\omega$. Note $\bb(\mathbf{PR})=\ee, \dd(\mathbf{PR})=\pre$, $\bb(\mathbf{BPR})=\eeb, \dd(\mathbf{BPR})=\preb$, $\bb(\mathbf{PR}_g)=\ee_g, \dd(\mathbf{PR}_g)=\pre_g$.
		\item For an ideal $I$ on $X$, define two relational systems $\bar{I}\coloneq\langle I,I,\subseteq\rangle$ and $C_I\coloneq\langle X,I,\in\rangle$.
		Note $\bb(\bar{I})=\addi,\dd(\bar{I})=\cofi$ and $\bb(C_I)=\noni,~\dd(C_I)=\covi$.
		We write just $I$ instead of $\bar{I}$.
	\end{itemize}
	
\end{exa}

\begin{dfn}
	$\R^\bot$ denotes the dual of $\R=\langle X,Y,\sqsubset\rangle$,
	i.e.,
	$\R^\bot\coloneq\langle Y,X,\sqsubset^\bot\rangle$ where $y\sqsubset^\bot x:\Leftrightarrow \lnot(x\sqsubset y)$.
\end{dfn}

\begin{dfn}
	
	For relational systems $\R=\langle X,Y,\sqsubset \rangle, \R^{\prime}=\langle X^{\prime},Y^{\prime},\sqsubset^{\prime}~\rangle$,
	$(\Phi_-,\Phi_+):\R\rightarrow\R^\prime$ is a Tukey connection from $\R$ into $\R^{\prime}$ if $\Phi_-:X\rightarrow X^{\prime}$ and $\Phi_+:Y^{\prime}\rightarrow Y$ are functions such that:
	\begin{equation*}
		\forall x\in X~\forall y^{\prime}\in Y^{\prime}~\Phi_-(x)\sqsubset^{\prime} y^{\prime}\Rightarrow x \sqsubset \Phi_{+} (y^{\prime}).
	\end{equation*}

	We write $\R\preceq_T\R^{\prime}$ if there is a Tukey connection from $\R$ into $\R^{\prime}$ and call $\preceq_T$ the Tukey order.
	Tukey equivalence is defined by $\R\cong_T\R^{\prime}$ iff $\R\preceq_T\R^{\prime}$ and $\R^{\prime}\preceq_T\R$ hold. 
	
\end{dfn}
	
\begin{fac}
	\label{Tukey order and b and d}
	\begin{enumerate}
		\item $\R\preceq_T\R^{\prime}$ implies $(\R^{\prime})^\bot\preceq_T(\R)^\bot$.
		\item $\R\preceq_T\R^{\prime}$ implies $\mathfrak{b}(\R^{\prime})\leq\mathfrak{b}(\R)$ and $\mathfrak{d}(\R)\leq\mathfrak{d}(\R^{\prime})$.
	\end{enumerate}
\end{fac}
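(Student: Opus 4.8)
The plan is to handle both parts by the usual diagram-chasing for Tukey connections; the whole statement is a routine verification once the roles of challenges and responses are tracked carefully through the dualization. Throughout, write $\R=\langle X,Y,\sqsubset\rangle$, $\R'=\langle X',Y',\sqsubset'\rangle$, and fix a Tukey connection $(\Phi_-,\Phi_+)\colon\R\to\R'$, so that $\Phi_-\colon X\to X'$, $\Phi_+\colon Y'\to Y$, and $\Phi_-(x)\sqsubset' y'\Rightarrow x\sqsubset\Phi_+(y')$ for all $x\in X$, $y'\in Y'$.

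For part (1), recall that $(\R')^\bot=\langle Y',X',(\sqsubset')^\bot\rangle$ and $\R^\bot=\langle Y,X,\sqsubset^\bot\rangle$, so a Tukey connection $(\R')^\bot\to\R^\bot$ requires a map $Y'\to Y$ on the challenge side and a map $X\to X'$ on the response side. The natural guess is simply to \emph{swap the two components}, i.e.\ set $\Psi_-\coloneq\Phi_+\colon Y'\to Y$ and $\Psi_+\coloneq\Phi_-\colon X\to X'$. I would then verify the Tukey condition for $(\Psi_-,\Psi_+)$, which unwinds to the implication that $\neg(x\sqsubset\Phi_+(y'))$ entails $\neg(\Phi_-(x)\sqsubset' y')$. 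This is exactly the contrapositive of the defining implication of $(\Phi_-,\Phi_+)$, so the verification is one line.

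For the $\dd$-inequality in part (2), I would push a dominating family forward along $\Phi_+$. Let $F'\subseteq Y'$ be $\R'$-dominating with $|F'|=\dd(\R')$ and put $F\coloneq\Phi_+[F']$. Given any $x\in X$, its image $\Phi_-(x)$ is responded by some $y'\in F'$, i.e.\ $\Phi_-(x)\sqsubset' y'$; the Tukey condition then gives $x\sqsubset\Phi_+(y')\in F$, so $F$ is $\R$-dominating and $\dd(\R)\le|F|\le|F'|=\dd(\R')$. For $\bb(\R')\le\bb(\R)$ I would dually push an unbounded family forward along $\Phi_-$: if $F\subseteq X$ is $\R$-unbounded, then $\Phi_-[F]$ is $\R'$-unbounded, since any $y'$ responding all of $\Phi_-[F]$ would force $\Phi_+(y')$ to respond all of $F$, contradicting unboundedness. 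Alternatively, this $\bb$-inequality drops out formally from part (1) together with the identities $\bb(\R)=\dd(\R^\bot)$ and $\dd(\R)=\bb(\R^\bot)$, which follow immediately from the definitions of unbounded and dominating sets applied to the dual system.

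There is no real obstacle here; the only place demanding care is the bookkeeping in part (1), namely remembering that dualizing interchanges the challenge and response sides, so the connection maps must be swapped and the defining implication read contrapositively. Everything else is a single chase through the Tukey condition.
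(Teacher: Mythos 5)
Your proof is correct: the swap $(\Psi_-,\Psi_+)=(\Phi_+,\Phi_-)$ with the contrapositive reading gives part (1), and pushing dominating families along $\Phi_+$ and unbounded families along $\Phi_-$ (or invoking $\bb(\R)=\dd(\R^\bot)$) gives part (2). The paper states this as a Fact without proof, and your argument is exactly the standard verification it implicitly relies on.
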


Using a relational system of the form $C_{[A]^{<\theta}}$
($[A]^{<\theta}$ is an ideal on $A$), $\bb(\R)$ and $\dd(\R)$ can be calculated from ``outside'' and ``inside'' .
\begin{cor}
	Let $\theta$ be regular uncountable and $A$ a set of size $\geq\theta$.
	\begin{itemize}
		\item[(outside)] If $\R\lq C_{[A]^{<\theta}}$, then $\theta\leq\bb(\R)$ and $\dd(\R)\leq|A|$.
		\item[(inside)] If $C_{[A]^{<\theta}}\lq \R$, then $\bb(\R)\leq\theta$ and $|A|\leq\dd(\R)$.
	\end{itemize}
\end{cor}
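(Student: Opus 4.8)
The plan is to reduce the whole statement to two cardinal computations for the relational system $C_{[A]^{<\theta}}=\langle A,[A]^{<\theta},\in\rangle$ and then to invoke the monotonicity of $\bb$ and $\dd$ under $\lq$ recorded in Fact~\ref{Tukey order and b and d}(2). Concretely, I would establish
\[
	\bb(C_{[A]^{<\theta}})=\non([A]^{<\theta})=\theta,\qquad \dd(C_{[A]^{<\theta}})=\cov([A]^{<\theta})=|A|.
\]
Granting these, all four inequalities are immediate. For the (outside) case, $\R\lq C_{[A]^{<\theta}}$ gives, by Fact~\ref{Tukey order and b and d}(2), $\theta=\bb(C_{[A]^{<\theta}})\leq\bb(\R)$ and $\dd(\R)\leq\dd(C_{[A]^{<\theta}})=|A|$. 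For the (inside) case, $C_{[A]^{<\theta}}\lq\R$ gives by the same Fact $\bb(\R)\leq\bb(C_{[A]^{<\theta}})=\theta$ and $|A|=\dd(C_{[A]^{<\theta}})\leq\dd(\R)$.

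It remains to carry out the two computations. For $\bb(C_{[A]^{<\theta}})$, a set $F\subseteq A$ is $C_{[A]^{<\theta}}$-unbounded exactly when no single $B\in[A]^{<\theta}$ responds every element of $F$, i.e.\ when $F\not\subseteq B$ for all $B$ of size $<\theta$; since $[A]^{<\theta}$ is downward closed this is precisely the condition $F\notin[A]^{<\theta}$, that is $|F|\geq\theta$. As $|A|\geq\theta$, such an $F$ of size $\theta$ exists, so the least size of an unbounded family is $\theta$. For $\dd(C_{[A]^{<\theta}})=\cov([A]^{<\theta})$, the bound $\leq|A|$ follows by covering $A$ with its singletons, each of which lies in $[A]^{<\theta}$ because $\theta$ is uncountable. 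For the reverse bound I would use the regularity of $\theta$: given a family $\langle B_i:i<\lambda\rangle$ in $[A]^{<\theta}$ with $\lambda<|A|$, if $\lambda\geq\theta$ then $\bigl|\bigcup_i B_i\bigr|\leq\lambda\cdot\theta=\lambda<|A|$, while if $\lambda<\theta$ then regularity of $\theta$ gives $\sup_i|B_i|<\theta$ and hence $\bigl|\bigcup_i B_i\bigr|\leq\lambda\cdot\sup_i|B_i|<\theta\leq|A|$. In either case the union has size $<|A|$ and so fails to cover $A$, which yields $\cov([A]^{<\theta})\geq|A|$.

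The computations are elementary, and the only step requiring real care is the lower bound for $\cov([A]^{<\theta})$, where the regularity of $\theta$ is genuinely used to prevent a union of fewer than $|A|$ small sets from exhausting $A$. This is most delicate in the boundary case $|A|=\theta$: there one must invoke precisely that a union of $<\theta$ many sets, each of size $<\theta$, again has size $<\theta$, so that no sub-$\theta$-sized family can cover a set of size $\theta$.
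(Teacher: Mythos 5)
Your proposal is correct and takes essentially the same route as the paper, which states this corollary without proof as an immediate consequence of Fact~\ref{Tukey order and b and d}(2): one only needs the elementary computations $\bb(C_{[A]^{<\theta}})=\non([A]^{<\theta})=\theta$ and $\dd(C_{[A]^{<\theta}})=\cov([A]^{<\theta})=|A|$, which you carry out correctly, including the genuinely necessary use of regularity of $\theta$ for the lower bound on $\cov([A]^{<\theta})$.
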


Both ``$\R\lq C_{[A]^{<\theta}}$'' and ``$C_{[A]^{<\theta}}\lq \R$'' have the following characterizations.
\begin{fac}(\cite[Lemma 1.16.]{forcing_constellations})
	\label{fac_Tukey_order_equivalence_condition}
	Let $\theta$ be an infinite cardinal, $I$ a (index) set of size $\geq\theta$ and $\R=\langle X,Y,\sqsubset\rangle$ a relational system.
	\begin{enumerate}
		\item If $|X|\geq\theta$, then $\R\lq C_{[X]^{<\theta}}$ iff any subset of $X$ of size $<\theta$ is $\R$-bounded (i.e., not $\R$-unbounded).
		\item \label{item_small_equiv}
		$C_{[I]^{<\theta}}\lq\R$ iff there exists $\langle x_i:i\in I\rangle$ such that every $y\in Y$ responds only $<\theta$-many $x_i$.
	\end{enumerate}
\end{fac}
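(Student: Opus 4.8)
The plan is to prove both equivalences by directly unfolding the definition of a Tukey connection: in one direction I produce explicit maps, and in the other I extract them from a given connection. Throughout I use that the comparison system is $C_{[X]^{<\theta}}=\langle X,[X]^{<\theta},\in\rangle$, so that a response $A\in[X]^{<\theta}$ responds a challenge $x$ exactly when $x\in A$.

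For (1), a Tukey connection $(\Phi_-,\Phi_+)$ witnessing $\R\lq C_{[X]^{<\theta}}$ consists of $\Phi_-\colon X\to X$ and $\Phi_+\colon [X]^{<\theta}\to Y$ satisfying $\Phi_-(x)\in A\Rightarrow x\sqsubset\Phi_+(A)$. For the forward direction, assume such a connection and let $F\subseteq X$ with $|F|<\theta$; then $A\coloneq\Phi_-[F]\in[X]^{<\theta}$, and for every $x\in F$ we have $\Phi_-(x)\in A$, whence $x\sqsubset\Phi_+(A)$. Thus the single response $\Phi_+(A)$ responds all of $F$, so $F$ is $\R$-bounded. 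For the converse, set $\Phi_-\coloneq\mathrm{id}_X$; by hypothesis each $A\in[X]^{<\theta}$ is $\R$-bounded, so using choice pick $\Phi_+(A)\in Y$ responding every element of $A$. Then $\Phi_-(x)=x\in A$ forces $x\sqsubset\Phi_+(A)$, giving the required connection.

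For (2), a Tukey connection $(\Psi_-,\Psi_+)$ witnessing $C_{[I]^{<\theta}}\lq\R$ consists of $\Psi_-\colon I\to X$ and $\Psi_+\colon Y\to[I]^{<\theta}$ with $\Psi_-(i)\sqsubset y\Rightarrow i\in\Psi_+(y)$. Forward: given such a connection, put $x_i\coloneq\Psi_-(i)$; for fixed $y\in Y$ the set $\{i\in I: x_i\sqsubset y\}$ is contained in $\Psi_+(y)$ by the connection property, hence has size $<\theta$, which is exactly the asserted condition. Converse: given $\langle x_i:i\in I\rangle$ as stated, define $\Psi_-(i)\coloneq x_i$ and $\Psi_+(y)\coloneq\{i\in I: x_i\sqsubset y\}$; the latter has size $<\theta$ by hypothesis, so $\Psi_+(y)\in[I]^{<\theta}$, and the connection property $x_i\sqsubset y\Rightarrow i\in\Psi_+(y)$ holds by the very definition of $\Psi_+$.

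Both arguments are bookkeeping with the definition, so no genuine obstacle arises; the only points requiring care are matching the directions of $\Phi_\pm$ and $\Psi_\pm$ against the (contravariant) convention for Tukey connections, and invoking the axiom of choice to define $\Phi_+$ in the converse of (1). The hypothesis $|X|\geq\theta$ (resp.\ $|I|\geq\theta$) plays no role in the equivalence itself and is present only to guarantee that the comparison system $C_{[X]^{<\theta}}$ is non-trivial, as assumed throughout the section.
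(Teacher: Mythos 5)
Your proof is correct: the paper states this result as a Fact, citing \cite[Lemma 1.16]{forcing_constellations} without giving a proof, and your direct unfolding of the Tukey-connection definition (identity map plus a choice function for the converse of (1), and the map $y\mapsto\{i\in I: x_i\sqsubset y\}$ in (2)) is exactly the standard argument behind the cited lemma. Your closing remark is also accurate: the cardinality hypotheses are not used in the equivalences and serve only to make $C_{[X]^{<\theta}}$ and $C_{[I]^{<\theta}}$ non-trivial, which is what the surrounding corollary on $\bb(\R)$ and $\dd(\R)$ requires.
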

%
\begin{fac}(\cite[Lemma 1.15.]{forcing_constellations})
	\label{fac_suff_eq_CI_and_I}
	If $\theta$ is regular and $A$ is a set with $|A|^{<\theta}=|A|$,
	then $C_{[A]^{<\theta}}\cong_T [A]^{<\theta}$.
\end{fac}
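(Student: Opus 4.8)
The plan is to exhibit Tukey connections in both directions. Throughout write $I=[A]^{<\theta}$, so that the relational system $\bar{I}$ (abbreviated $I$) is $\langle I,I,\subseteq\rangle$ and $C_I=\langle A,I,\in\rangle$. Note that the assumption $|A|^{<\theta}=|A|$ is exactly the statement $|I|=|A|$, while the regularity of $\theta$ will be used to keep certain unions inside $I$.

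First I would treat the direction $C_I\preceq_T I$, which needs no cardinal arithmetic. Define $\Phi_-\colon A\to I$ by $\Phi_-(a)=\{a\}$ (a singleton, hence in $I$ since $1<\theta$), and let $\Phi_+\colon I\to I$ be the identity. For $a\in A$ and $B\in I$, the hypothesis $\Phi_-(a)\subseteq B$ reads $\{a\}\subseteq B$, which gives $a\in B=\Phi_+(B)$; this is precisely the Tukey condition $\Phi_-(a)\sqsubset' B\Rightarrow a\in\Phi_+(B)$ for $\sqsubset'=\subseteq$.

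For the reverse direction $I\preceq_T C_I$, I would invoke the bijection $e\colon I\to A$ furnished by $|I|=|A|$. Set $\Phi_-\colon I\to A$, $\Phi_-(B)=e(B)$, and define $\Phi_+\colon I\to I$ by $\Phi_+(C)=\bigcup\{e^{-1}(a):a\in C\}=\bigcup e^{-1}[C]$, the union of all the small sets whose code under $e$ lies in $C$. The verification is immediate: if $\Phi_-(B)=e(B)\in C$, then $B=e^{-1}(e(B))\in e^{-1}[C]$, whence $B\subseteq\Phi_+(C)$, as required. Combining the two connections yields $C_{[A]^{<\theta}}\cong_T[A]^{<\theta}$.

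The single point demanding care, and the only place the hypotheses really bite, is confirming that $\Phi_+(C)$ genuinely lies in $I$, i.e.\ has size $<\theta$. Since $C\in I$ we have $|C|<\theta$, and each $e^{-1}(a)$ is a single element of $I$, so of size $<\theta$; as $\theta$ is regular, a union of fewer than $\theta$ sets each of size less than $\theta$ again has size less than $\theta$. Thus $\Phi_+$ is well defined, and this regularity check is the whole substance of the argument; everything else is the routine unwinding of the Tukey condition.
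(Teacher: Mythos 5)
Your proof is correct: both Tukey connections are valid, the verification of the Tukey condition in each direction is exactly right, and you correctly identify that the only substantive point is that $\Phi_+(C)=\bigcup e^{-1}[C]$ has size $<\theta$, which is where the regularity of $\theta$ is used (with $|A|^{<\theta}=|A|$ supplying the bijection $e$). The paper itself states this as a Fact without proof, citing \cite{forcing_constellations}; your argument is the standard one behind that citation, so there is nothing to compare beyond noting agreement.
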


\begin{fac}(\cite[Lemma 2.11.]{forcing_constellations})
	\label{fac_cap_V}
	Let $\theta$ be uncountable regular and $A$ be a set of size $\geq\theta$.
	Then, any $\theta$-cc poset forces $[A]^{<\theta}\cong_T[A]^{<\theta}\cap V$ and $C_{[A]^{<\theta}}\cong_T C_{[A]^{<\theta}}\cap V$.
	Moreover, $\mathfrak{x}([A]^{<\theta})=\mathfrak{x}^V([A]^{<\theta})$ where $\mathfrak{x}$ represents ``$\add$'', ``$\cov$'', ``$\non$'' or ``$\cof$''.
\end{fac}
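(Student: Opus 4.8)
The engine of the whole statement is the standard \emph{covering property} of $\theta$-cc forcing, and everything else is bookkeeping with Tukey connections. The plan is to first prove, in $V[G]$, that every $X\in[A]^{<\theta}$ is contained in some $Y\in[A]^{<\theta}\cap V$. Given a $\mathbb{P}$-name $\dot X$ forced to have size $<\theta$, I would fix a name $\dot f\colon\alpha\to A$ (with $\alpha<\theta$) enumerating it, and for each $\beta<\alpha$ set $Y_\beta=\{a\in A:\exists p\in\mathbb{P}\ p\Vdash\dot f(\beta)=a\}$. Since an antichain of conditions deciding pairwise distinct values of $\dot f(\beta)$ has size $<\theta$ by the $\theta$-cc, each $Y_\beta$ has size $<\theta$; as $\theta$ is regular and $\alpha<\theta$, the set $Y=\bigcup_{\beta<\alpha}Y_\beta\in V$ still has size $<\theta$, and clearly $X\subseteq Y$. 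Thus $[A]^{<\theta}\cap V$ is $\subseteq$-cofinal in $[A]^{<\theta}$ in $V[G]$, which is exactly what the Tukey connections will use.

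Next I would write down the Tukey connections explicitly. For $[A]^{<\theta}\cong_T[A]^{<\theta}\cap V$: in one direction take $\Phi_-$ to be a covering map $X\mapsto g(X)\in[A]^{<\theta}\cap V$ with $X\subseteq g(X)$ and $\Phi_+$ the inclusion; in the other direction take $\Phi_-$ the inclusion and $\Phi_+$ the covering map. In both cases the defining implication of a Tukey connection reduces to the transitivity of $\subseteq$ together with $X\subseteq g(X)$, so the verification is immediate. For $C_{[A]^{<\theta}}\cong_T C_{[A]^{<\theta}}\cap V$ the challenge set $A$ lies in $V$ and is unchanged, so I would take $\Phi_-=\mathrm{id}_A$ in both directions, pairing it with the inclusion on one side and the covering map on the other; the condition $a\in y\Rightarrow a\in\Phi_+(y)$ holds because the covering map only enlarges $y$. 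This establishes both equivalences.

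For the ``moreover'', I would combine these equivalences with Fact \ref{Tukey order and b and d}(2): from $[A]^{<\theta}\cong_T[A]^{<\theta}\cap V$ we get $\add$ and $\cof$ of $[A]^{<\theta}$ (in $V[G]$) equal to those of $[A]^{<\theta}\cap V$, and from the $C$-equivalence we get the same for $\non$ and $\cov$. It then remains to see that the four invariants of the fixed ground-model system $[A]^{<\theta}\cap V$ are computed identically in $V$ and in $V[G]$. Three of them are pinned down by cardinal arithmetic that $\theta$-cc forcing does not disturb: since $\theta$ stays regular and $|A|\geq\theta$ is preserved, $\add([A]^{<\theta})=\non([A]^{<\theta})=\theta$ and $\cov([A]^{<\theta})=|A|$ in both models. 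The only genuinely non-trivial case is $\cof$, and here I would invoke the covering property once more, now for families of size $\geq\theta$: any cofinal family has size $\geq\theta$ (otherwise its union would be a $<\theta$ set covering all of $A$, contradicting $|A|\geq\theta$), a $\theta$-cc extension covers any set of size $\kappa\geq\theta$ of ground-model objects by a ground-model set of the same size, and being cofinal in the $V$-ideal is absolute between $V$ and $V[G]$; this yields $\cof^{V[G]}([A]^{<\theta}\cap V)=\cof^V([A]^{<\theta})$.

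The main obstacle is concentrated in the covering property of the first paragraph: once it is in hand, the Tukey connections are formal and the invariant computations are routine cardinal arithmetic, the single delicate point being the $\cof$ case, where one must upgrade the covering property from $<\theta$-sized sets to $\geq\theta$-sized families and check that cofinality of a ground-model family is absolute.
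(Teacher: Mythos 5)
Your proposal is correct, and it is essentially the intended argument: the paper does not prove this Fact at all (it is quoted from \cite[Lemma 2.11]{forcing_constellations}), and your route---the $\theta$-cc covering property, the four explicit Tukey connections built from a covering map $g$ with $X\subseteq g(X)$, and the reduction of the ``moreover'' part to $\add=\non=\theta$, $\cov=|A|$ plus the absoluteness/covering argument for $\cof$---is exactly how the cited lemma is established. The only point to polish is the choice of a single $\alpha<\theta$ with a name $\dot{f}\colon\alpha\to A$ enumerating $\dot{X}$: the weakest condition need not decide $|\dot{X}|$, but by $\theta$-cc the possible values of the least bound on $|\dot{X}|$ form a set of size $<\theta$, so (by regularity of $\theta$) their supremum can serve as $\alpha$; alternatively, run the covering argument densely below an arbitrary condition.
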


\begin{dfn}
	$\R=\langle X,Y,\sqsubset\rangle$ is a Polish relational system (Prs) if:
	\begin{enumerate}
		\item $X$ is a perfect Polish space.
		\item  $Y$ is analytic in a Polish space $Z$.
		\item $\sqsubset =\bigcup_{n<\omega}\sqsubset_n$ where $\langle \sqsubset_n:n<\omega\rangle$ is an ($\subseteq$-)increasing sequence of closed subsets of $X\times Z$ such that for any $n<\omega$ and any $y\in Y$, $\{x\in X:x\sqsubset_n y\}$ is closed nowhere dense.
	\end{enumerate}
	When dealing with a Prs, we interpret it depending on the model we are working in.
\end{dfn}

In this section, $\R=\langle X,Y,\sqsubset\rangle$ denotes a Prs. 
\begin{dfn}
	A poset $\p$ is $\theta$-$\R$-good if for any $\p$-name $\dot{y}$ for a member of $Y$, there is a non-empty set $Y_0\subseteq Y$ of size $<\theta$ such that for any $x\in X$, if $x$ is not responded by any $y\in Y_0$, then $\p$ forces $x$ is not responded by $\dot{y}$.
	If $\theta=\aleph_1$, we say ``$\R$-good'' instead of ``$\aleph_1$-$\R$-good''.
\end{dfn}
%
\begin{fac}(\cite{JS90}, \cite[Corollary 4.10.]{BCMseparating})
	Any fsi of $\theta$-cc $\theta$-$\R$-good posets is again $\theta$-$\R$-good whenever $\theta$ is regular uncountable.
\end{fac}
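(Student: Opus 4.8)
The plan is to proceed by induction on the length $\pi$ of the iteration $\langle\p_\alpha,\qd_\alpha:\alpha<\pi\rangle$, the inductive hypothesis being that every $\p_\alpha$ with $\alpha<\pi$ is $\theta$-cc and $\theta$-$\R$-good (that $\theta$-cc is preserved by fsi of $\theta$-cc posets for regular $\theta$ is standard, so I treat it as given). Throughout I would work with the equivalent \emph{covering reformulation} of goodness: $\p$ is $\theta$-$\R$-good iff for every $\p$-name $\dot y$ for a member of $Y$ there is $Y_0\in[Y]^{<\theta}$ in the ground model such that $\p$ forces $\{x\in X^V:x\sqsubset\dot y\}\subseteq\bigcup_{y_0\in Y_0}\{x:x\sqsubset y_0\}$. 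This is just the contrapositive of the definition and is more convenient for chasing implications.

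For the routine cases I would argue as follows. At a \emph{successor} $\pi=\alpha+1$, where $\p_{\alpha+1}=\p_\alpha\ast\qd_\alpha$, I would prove a two-step composition lemma: given a $\p_{\alpha+1}$-name $\dot y$, first work in $V^{\p_\alpha}$ and apply goodness of $\qd_\alpha$ to obtain a $\p_\alpha$-name $\dot Y_1$ for a member of $[Y]^{<\theta}$ covering $\dot y$; then, using $\theta$-cc and regularity of $\theta$, enumerate $\dot Y_1=\{\dot y^i:i<\delta\}$ by $\p_\alpha$-names for some fixed $\delta<\theta$, apply goodness of $\p_\alpha$ to each $\dot y^i$ to get $Y_0^i\in[Y]^{<\theta}$, and take $Y_0\coloneq\bigcup_{i<\delta}Y_0^i$, which has size $<\theta$. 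A short chase finishes it: if a ground-model $x$ is responded by no member of $Y_0$, then $\p_\alpha$ forces $x\not\sqsubset\dot y^i$ for all $i$, so $x$ is responded by no member of $\dot Y_1$, whence $\p_{\alpha+1}$ forces $x\not\sqsubset\dot y$. At a \emph{limit} $\pi$ with $\cf(\pi)\geq\theta$, every $\p_\pi$-name $\dot y$ for a member of the analytic set $Y$ is coded by a name for a real, which is decided by countably many antichains each of size $<\theta$; since $\theta$ is regular uncountable and $\cf(\pi)\geq\theta$, the union of their finite supports is bounded below $\pi$, so $\dot y$ is already a $\p_\alpha$-name for some $\alpha<\pi$ and the inductive hypothesis applies.

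The hard part will be the \emph{limit} case $\cf(\pi)=\lambda<\theta$ (for $\theta=\aleph_1$, the cofinality-$\omega$ case). Here reflection fails and, crucially, one cannot reduce to a shorter iteration, since the tail $\p_\pi/\p_{\alpha_\xi}$ is again cofinal in $\pi$; a direct argument is needed. Fixing a cofinal sequence $\langle\alpha_\xi:\xi<\lambda\rangle$, the idea is to exploit the Polish structure $\sqsubset=\bigcup_n\sqsubset_n$: for a ground-model challenge $x$, the condition $x\sqsubset\dot y$ unfolds as $\exists n\,x\sqsubset_n\dot y$ with each $\sqsubset_n$ closed, so forcing $x\sqsubset_n\dot y$ depends only on an approximation of $\dot y$. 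One uses $\theta$-cc to approximate, at each stage $\alpha_\xi$, the behaviour of $\dot y$ relative to the closed relations $\sqsubset_n$ by a $\p_{\alpha_\xi}$-name, applies the inductive hypothesis at $\alpha_\xi$ to get $Y_0^\xi\in[Y]^{<\theta}$, and sets $Y_0\coloneq\bigcup_{\xi<\lambda}Y_0^\xi$, which has size $<\theta$ because $\lambda<\theta$ and $\theta$ is regular. The delicate point, where I expect the real work to lie, is verifying that this $Y_0$ covers the response-region of the genuinely $\p_\pi$-dependent object $\dot y$ itself: one must pass from the stagewise $\sqsubset_n$-approximations to the full relation $\sqsubset$, and it is precisely here that the increasing union together with the nowhere-density of the sections $\{x:x\sqsubset_n y\}$ is used to control the limit passage. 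Making this approximation-and-assembly argument precise is the crux of the whole preservation theorem.
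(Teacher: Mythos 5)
The paper does not prove this Fact at all --- it quotes it from \cite{JS90} and \cite[Corollary 4.10]{BCMseparating} --- so your proposal has to be measured against the standard argument in those references. Your covering reformulation, your successor step (the two-step composition lemma, including the use of $\theta$-cc and regularity to bound $|\dot Y_1|$ by a single $\delta<\theta$), and your limit step for $\cf(\pi)\geq\theta$ (reflection of nice names for reals to some $\p_\alpha$, $\alpha<\pi$) are all correct and are exactly the standard arguments.

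The genuine gap is the case $\cf(\pi)=\lambda<\theta$, which is the entire content of the theorem and which you do not prove: you describe a plan (``approximate, at each stage $\alpha_\xi$, the behaviour of $\dot y$ relative to $\sqsubset_n$ by a $\p_{\alpha_\xi}$-name'') and then explicitly defer its verification (``making this approximation-and-assembly argument precise is the crux''). A proposal that defers the crux is not a proof, and in fact your sketch points at the wrong mechanism: nowhere density of the sections $\{x\in X: x\sqsubset_n y\}$ plays no role in this case (it is what makes Cohen reals unbounded, i.e., it is used for the \emph{next} Fact in the paper, that the iteration forces $C_{[\gamma]^{<\theta}}\lq\R$). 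What the standard proof actually does is the following. (i) Since supports are \emph{finite}, every $p\in\p_\pi$ already lies in some $\p_{\alpha_\xi}$, so the response region $B(\dot y)=\{x\in X\cap V:\exists p~(p\Vdash x\sqsubset\dot y)\}$ decomposes as $\bigcup_{\xi<\lambda}\bigcup_{n<\omega}B_{\xi,n}$, where $B_{\xi,n}$ collects those $x$ for which some $p\in\p_{\alpha_\xi}$ forces $x\sqsubset_n\dot y$. (ii) For each $\xi$, working in $V^{\p_{\alpha_\xi}}$, one builds an \emph{interpretation} $\dot y_\xi$ of $\dot y$: a decreasing sequence of conditions in the quotient $\p_\pi/\p_{\alpha_\xi}$ deciding $\dot y$ within basic sets of vanishing diameter --- together with a name for a witness to $\dot y\in Y$ coming from the closed set projecting onto the analytic set $Y$, a point your sketch ignores --- whose limit $\dot y_\xi$ is a genuine $\p_{\alpha_\xi}$-name for a member of $Y$; this is the object to which the inductive hypothesis can be applied, and your proposal never says what it is. (iii) If $p\in\p_{\alpha_\xi}$ forces $x\sqsubset_n\dot y$, then below $p$ the \emph{whole} quotient forces $x\sqsubset_n\dot y$ (this is precisely why the decomposition in (i) insists that $p$ lie in $\p_{\alpha_\xi}$), so every condition of the interpreting sequence forces it; hence each decided basic set contains a point related to $x$ by $\sqsubset_n$, and \emph{closedness} (not nowhere density) of $\{z\in Z: x\sqsubset_n z\}$ yields that $\p_{\alpha_\xi}$ does not force $\lnot(x\sqsubset_n\dot y_\xi)$. (iv) Goodness of $\p_{\alpha_\xi}$ applied to $\dot y_\xi$ then traps such $x$ in a set $Y_0^\xi\in[Y]^{<\theta}$, and $Y_0\coloneq\bigcup_{\xi<\lambda}Y_0^\xi$ works, with $|Y_0|<\theta$ by regularity. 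Steps (i)--(iii) are the missing idea: without them there is no way to pass from stagewise names to the single $\p_\pi$-name $\dot y$, which is exactly the passage your proposal flags but does not carry out.
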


\begin{fac}(\cite{FM21},\cite[Theorem 4.11.]{BCMseparating})
	Let $\p$ be a fsi of non-trivial $\theta$-cc $\theta$-$\R$-good posets of length $\gamma\geq\theta$. 
	Then, $\p$ forces $C_{[\gamma]^{<\theta}}\lq\R$.
\end{fac}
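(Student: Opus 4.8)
The plan is to verify the Tukey connection $C_{[\gamma]^{<\theta}}\lq\R$ through the inside characterization of Fact~\ref{fac_Tukey_order_equivalence_condition}(\ref{item_small_equiv}): it suffices to produce, in the extension $V^\p$, a family $\langle x_\alpha:\alpha<\gamma\rangle$ of challenges in $X$ such that every response $y\in Y$ satisfies $|\{\alpha<\gamma:x_\alpha\sqsubset y\}|<\theta$. Since $\p$ is a fsi of non-trivial $\theta$-cc $\theta$-$\R$-good posets, the whole iteration, and each of its tails, is again $\theta$-$\R$-good by the preservation result stated just above; these two features---genericity of the $x_\alpha$ on the one hand, goodness of the tails on the other---are what I would play against each other.

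First I would fix the challenges. Writing $V_\alpha\coloneq V^{\p_\alpha}$ for the intermediate extensions, I would take $\dot x_\alpha$ to be (a name for) a challenge in $X$ that is \emph{$\R$-unbounded over $V_\alpha$}, i.e.\ $x_\alpha\not\sqsubset y$ for every $y\in Y\cap V_\alpha$. Such reals exist because a finite support iteration of non-trivial posets adds Cohen reals over its intermediate models, and a Cohen real in the perfect Polish space $X$ avoids every meager Borel set coded in $V_\alpha$; in particular it avoids each response set $\{x\in X:x\sqsubset y\}=\bigcup_{n<\omega}\{x\in X:x\sqsubset_n y\}$ with $y\in Y\cap V_\alpha$, which is meager precisely because $\R$ is a Prs (each $\{x:x\sqsubset_n y\}$ is closed nowhere dense). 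Re-indexing if necessary, I may assume one such $x_\alpha$ is assigned to each $\alpha<\gamma$.

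Next I would bound, for a fixed $\p$-name $\dot y$ for a response, the set $\dot S\coloneq\{\alpha<\gamma:x_\alpha\sqsubset\dot y\}$. By $\theta$-cc a nice name for $\dot y$ uses only countably many maximal antichains, each of size $<\theta$, so its support is some $A\in[\gamma]^{<\theta}$ (here $\theta$ regular uncountable is used). For every $\alpha$ with $A\subseteq\alpha$ we have $y\in V_\alpha$, so $\R$-unboundedness of $x_\alpha$ over $V_\alpha$ gives $x_\alpha\not\sqsubset y$, i.e.\ $\alpha\notin\dot S$. Thus the only possible members of $\dot S$ are stages $\alpha$ with $A\not\subseteq\alpha$. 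When $A$ is bounded in $\gamma$ this already confines $\dot S$ below $\sup A$, but that is not yet enough (and when $\cf(\gamma)<\theta$ the support $A$ may even be cofinal), so the remaining work is to rule out the stages $\alpha\notin A$ at which $\dot y$ still ``reaches above'' $\alpha$.

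This is where goodness enters and is the crux of the argument. For $\alpha\notin A$ the value of $\dot y$ does not depend on the generic added at stage $\alpha$, and I would exploit this by regarding $\dot y$ as a name for the $\theta$-$\R$-good tail iteration over a model that contains $y$ but over which $x_\alpha$ is still $\R$-unbounded---obtained by separating stage $\alpha$ from the coordinates in $A$. Goodness then supplies a set $Y_0\in[Y]^{<\theta}$ predicting $\dot y$, and since $x_\alpha$ is unbounded it avoids $Y_0$ and is therefore forced not to be responded by $\dot y$; carried out uniformly this shows $\dot S\subseteq A$ up to a set of size $<\theta$, whence $|\dot S|<\theta$ as required. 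The main obstacle is exactly this last step: in a genuine product the conclusion $x_\alpha\not\sqsubset y$ for $\alpha\notin A$ would be immediate from mutual genericity, but a finite support iteration is not a product, and the delicate point---especially in the limit case where the support of $\dot y$ can be unbounded below $\alpha$ or cofinal in $\gamma$---is to use goodness of the tails to recover this independence while keeping the exceptional set of stages strictly below $\theta$. An induction on $\gamma$ (successor steps adding a single challenge and preserving the connection, limit steps treated according to $\cf(\gamma)$ versus $\theta$) is the natural way to organize this bookkeeping.
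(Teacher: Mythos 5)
Your overall setup is the right one and matches the argument behind the cited result (the paper itself states this as a Fact, citing \cite{FM21} and \cite{BCMseparating}): use Fact~\ref{fac_Tukey_order_equivalence_condition}(\ref{item_small_equiv}), take as challenges the Cohen reals $x_\alpha$ added along the iteration (which are $\R$-unbounded over $V^{\p_\alpha}$ since response sets are meager), and reduce to bounding $\dot S=\{\alpha:x_\alpha\sqsubset\dot y\}$ using nice names and $\theta$-cc. The gap is in your crux step, and it is a genuine one: the mechanism ``goodness of a tail gives $Y_0$, and $x_\alpha$ avoids $Y_0$ by unboundedness'' cannot be made to work, because of a model mismatch. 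The paper's goodness only yields the implication for challenges $x\in X$ \emph{belonging to the model in which goodness is invoked}. If you apply goodness of $\p_\gamma/\p_\beta$ inside $V^{\p_\beta}$, you get $Y_0\in V^{\p_\beta}$ and the implication only for $x\in X\cap V^{\p_\beta}$; so for it to say anything about $x_\alpha$ you need $\alpha<\beta$, but then $Y_0$ lives in a \emph{later} model than $x_\alpha$, and unboundedness of $x_\alpha$ over $V^{\p_\alpha}$ gives no reason why $x_\alpha$ should avoid $Y_0$ (its members may have been built from $x_\alpha$ itself). Conversely, to get ``$x_\alpha$ avoids $Y_0$'' from genericity you need $Y_0\in V^{\p_\alpha}$, i.e.\ $\beta\leq\alpha$, and then goodness does not apply to $x_\alpha\notin V^{\p_\beta}$. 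Your proposed ``separation of stage $\alpha$ from the coordinates in $A$'' is exactly the product-style rearrangement that an iteration does not admit, so it cannot resolve this dilemma; moreover the conclusion you aim for, that $x_\alpha\not\sqsubset y$ for (essentially) all $\alpha\notin A$, is stronger than what is true: stages outside the support of the name can respond, one only gets that there are $<\theta$ many of them.

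The missing idea is that goodness must be fed into an \emph{induction hypothesis on shorter iterations}, rather than combined with genericity. One argues by induction on $\gamma$. If $\cf(\gamma)\geq\theta$, then $\dot y$ is a $\p_\beta$-name for some $\beta<\gamma$; stages $\alpha\geq\beta$ are excluded by genericity, and $\{\alpha<\beta:x_\alpha\sqsubset y\}$ has size $<\theta$ by the induction hypothesis for $\p_\beta$. If $\cf(\gamma)<\theta$, fix a cofinal sequence $\langle\gamma_i:i<\cf(\gamma)\rangle$ and apply goodness of each tail $\p_\gamma/\p_{\gamma_i}$ in $V^{\p_{\gamma_i}}$ to get $Y_0^i\in V^{\p_{\gamma_i}}$ of size $<\theta$. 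For $\alpha<\gamma_i$, if $x_\alpha\sqsubset y$ then, since $x_\alpha\in X\cap V^{\p_{\gamma_i}}$, goodness (contrapositively) gives some $y^\prime\in Y_0^i$ with $x_\alpha\sqsubset y^\prime$; and for each such $y^\prime$ the set $\{\alpha<\gamma_i:x_\alpha\sqsubset y^\prime\}$ has size $<\theta$ by the \emph{induction hypothesis} applied to $\p_{\gamma_i}$ --- not by genericity, since $y^\prime$ need not lie in any $V^{\p_\alpha}$ with $\alpha$ below it. Regularity of $\theta$ then bounds $\bigcup_{i<\cf(\gamma)}\bigcup_{y^\prime\in Y_0^i}\{\alpha<\gamma_i:x_\alpha\sqsubset y^\prime\}$, hence $\dot S$, by $<\theta$; successor steps are the same with goodness of the single iterand $\qd_\delta$ in $V^{\p_\delta}$. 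So the engine of the proof is goodness producing responses in intermediate models to which the induction hypothesis is applied member by member, not goodness certifying that the generic challenges themselves evade $\dot y$.
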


\begin{fac}
	(\cite[Lemma 4]{Mej13},
	\cite[Theorem 6.4.7]{BJ95})
	If $\theta$ is regular, then any poset of size $<\theta$ is $\theta$-$\R$-good.
	In particular, Cohen forcing is $\R$-good.
\end{fac}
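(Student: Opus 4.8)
The plan is to build, for each condition $p\in\p$, a single response $y_p\in Y$ that ``dominates $\dot y$ below $p$'' at every layer $\sqsubset_n$, and then set $Y_0:=\{y_p:p\in\p\}$, so that $|Y_0|\le|\p|<\theta$. First fix a $\p$-name $\dot y$ for a member of $Y$, and for $p\in\p$ and $n<\omega$ put $A_{p,n}:=\{x\in X:p\Vdash x\sqsubset_n\dot y\}$. Using that $\sqsubset_n$ is a closed subset of $X\times Z$, I would check that each $A_{p,n}$ is closed in $X$: if $x_k\to x$ with $x_k\in A_{p,n}$, then in any extension by a generic $G\ni p$ we have $x_k\sqsubset_n\dot y[G]$ for all $k$, so closedness of $\sqsubset_n$ gives $x\sqsubset_n\dot y[G]$, whence $p\Vdash x\sqsubset_n\dot y$. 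Thus each $A_{p,n}$ admits a closed code $c_{p,n}\in V$.

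The construction of $y_p$ is the crux. Consider the statement $\varphi_p$: ``there is $y\in Y$ such that $x\sqsubset_n y$ for every $n<\omega$ and every $x\in A_{p,n}$.'' This is $\Sigma^1_1$ in the ground-model parameters $\langle c_{p,n}:n<\omega\rangle$ together with the codes of the $\sqsubset_n$ and of $Y$. I would verify that $\varphi_p$ holds in any extension $V[G]$ with $p\in G$, taking $\dot y[G]\in Y$ as a witness: for $x$ in the reinterpretation of the closed set coded by $c_{p,n}$ one still has $x\sqsubset_n\dot y[G]$, since $\{x:x\sqsubset_n\dot y[G]\}$ is closed and contains $A_{p,n}\cap V$, hence contains the whole reinterpreted closed set. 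By $\Sigma^1_1$-absoluteness between $V$ and $V[G]$, $\varphi_p$ then holds in $V$; let $y_p\in Y$ be a witness. By construction, whenever $p\Vdash x\sqsubset_n\dot y$ we have $x\sqsubset_n y_p$, and in particular $x\sqsubset y_p$.

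Finally I set $Y_0:=\{y_p:p\in\p\}$, which is non-empty with $|Y_0|\le|\p|<\theta$. To verify goodness, suppose $x\in X$ is responded by no member of $Y_0$, and assume $\p\not\Vdash x\not\sqsubset\dot y$. Then some condition forces $x\sqsubset\dot y$; refining it and using $\sqsubset=\bigcup_{n<\omega}\sqsubset_n$ with the $\sqsubset_n$ increasing, there are $q\in\p$ and $n<\omega$ with $q\Vdash x\sqsubset_n\dot y$, i.e.\ $x\in A_{q,n}$. Then $x\sqsubset_n y_q$, so $x\sqsubset y_q$ with $y_q\in Y_0$, contradicting the choice of $x$. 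Hence $\p$ is $\theta$-$\R$-good. The ``in particular'' is then immediate, since Cohen forcing $2^{<\omega}$ has size $\aleph_0<\aleph_1$ and is therefore $\aleph_1$-$\R$-good, i.e.\ $\R$-good.

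I expect the main obstacle to be the second step, namely guaranteeing that the dominating response can be found \emph{inside} the analytic set $Y$ rather than merely in the ambient Polish space $Z$ (a naive limit of ground-model approximations to $\dot y$ need not land in $Y$). This is exactly where $\Sigma^1_1$-absoluteness is essential, and it must be combined with the care that the ground-model codes $c_{p,n}$ of the closed sets $A_{p,n}$ are correctly reinterpreted in $V[G]$, so that the generic value $\dot y[G]$ genuinely witnesses $\varphi_p$ there.
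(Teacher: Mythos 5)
The paper states this result as a Fact without proof, citing Mej\'{\i}a and Bartoszy\'{n}ski--Judah, and your argument is precisely the standard proof from those sources: form the closed sets $A_{p,n}=\{x\in X: p\Vdash x\sqsubset_n\dot y\}$ and use $\Sigma^1_1$-absoluteness to pull a responding $y_p\in Y$ back into the ground model, so your approach matches the intended one and the goodness verification at the end is correct. One point to tighten: as literally written, $\varphi_p$ universally quantifies over all $x\in A_{p,n}$ and is therefore only $\Sigma^1_2$ on its face; to make it genuinely $\Sigma^1_1$, replace that quantifier by quantification over a countable dense subset of $A_{p,n}$ (equivalently, by inclusion of pruned-tree codes, $T_{p,n}\subseteq R_{n,y}$), which is an equivalent condition because $\{x: x\sqsubset_n y\}$ is closed, and this same reformulation also handles the reinterpretation issue you flag in your final paragraph.
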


To treat goodness, we have to characterize cardinal invariants using a Prs.
While $\mathbf{D},\mathbf{PR},\mathbf{BPR}$ and $\mathbf{PR}_g$ are canonically Prs's, the cardinal invariants on ideals need other characterizations.

\begin{exa}
	\begin{enumerate}
		\item For $k<\omega$, let $\mathrm{id}^k\in\oo$ denote the function $i\mapsto i^k$ for each $i<\omega$ and let $\mathcal{H}\coloneq\{\mathrm{id}^{k+1}:k<\omega\}$.
		
		Let $\mathcal{S}=\mathcal{S}(\omega,\mathcal{H})$ be the set of all functions $\varphi\colon\omega\to\fin$ such that there is $h\in \mathcal{H}$ 
		with $|\varphi(i)|\leq h(i)$ for all $i<\omega$.
		Let $\mathbf{Lc}^*=\langle\oo,\mathcal{S},\in^*\rangle$ be the Prs where $x\in^*\varphi:\Leftrightarrow x(n)\in\varphi(n)$ for all but finitely many $n<\omega$.
		
		As a consequence of \cite{BartInv},
		$\mathbf{Lc}^*\cong_T\n$ holds.
		
		Any $\mu$-centered poset is $\mu^+$-$\mathbf{Lc}^*$-good (\cite{Bre91,JS90}).
		 
		Any Boolean algebra with a strictly positive finitely additive measure is $\mathbf{Lc}^*$-good (\cite{Kam89}). In particular, so is any subalgebra of random forcing.
		
		\item For each $n<\omega$,
		let $\Omega_n\coloneq\{a\in[\sq]^{<\omega}:\mathbf{Lb}_2(\bigcup_{s\in a}[s])\leq2^{-n}\}$ (endowed with the discrete topology) where $\mathbf{Lb}_2$ is the standard Lebesgue measure on $2^\omega$.
		Put $\Omega\coloneq\prod_{n<\omega}\Omega_n$ with the product topology, which is a perfect Polish space.
		For $x\in\Omega$, let $N_x^*\coloneq\bigcap_{n<\omega}\bigcup_{s\in x(n)}[s]$, a Borel null set in $2^\omega$.
		Define the Prs $\mathbf{Cn}\coloneq\langle\Omega,2^\omega,\sqsubset^{\mathbf{Cn}}\rangle$ where $x\sqsubset^\mathbf{Cn}z\colon\Leftrightarrow z\notin N^*_x$.
		Since $\langle N^*_x:x\in\Omega\rangle$ is cofinal in $\n(2^\omega)$ (the set of all null sets in $2^\omega$), $\mathbf{Cn}\cong_T C_\n^\bot$ holds. 
		
		Any $\mu$-centered poset is $\mu^+$-$\mathbf{Cn}$-good (\cite{Bre91}).
		
		\item Let $\Xi\coloneq\{f\in(\sq)^{\sq}:\forall s\in\sq, s\subseteq f(s)\}$ and define the Prs $\mathbf{Mg}\coloneq\langle2^\omega,\Xi,\in^\bullet\rangle$ where $x\in^\bullet f\colon\Leftrightarrow|\{s\in\sq:x\supseteq f(s)\}|<\omega$. Note that $\mathbf{Mg}\cong_T C_\m$.

	\end{enumerate}
\end{exa}
Summarizing the properties of the ``inside'' direction and the goodness, 
we obtain the following corollary, which will be actually applied to the iteration in Section \ref{sec_E}.
\begin{cor}
	\label{cor_smallness_for_addn_and_covn_and_nonm}
	Let $\theta$ be regular uncountable and $\p$ be a fsi of ccc forcings of length $\gamma\geq\theta$.
	
	\begin{enumerate}
		\item Assume that each iterand is either:
		\begin{itemize}
			\item of size $<\theta$,
			\item a subalgebra of random forcing, or
			\item $\sigma$-centered.
		\end{itemize}
		Then, $\p$ forces $C_{[\gamma]^{<\theta}}\lq\mathbf{Lc}^*$,
		in particular, $\addn\leq\theta$.
		\item  Assume that each iterand is either:
		\begin{itemize}
			\item of size $<\theta$, or
			\item $\sigma$-centered.
		\end{itemize}
		Then, $\p$ forces $C_{[\gamma]^{<\theta}}\lq\mathbf{Cn}$,
		in particular, $\covn\leq\theta$.
		\item Assume that each iterand is:
		\begin{itemize}
			\item of size $<\theta$.
		\end{itemize}
		Then, $\p$ forces $C_{[\gamma]^{<\theta}}\lq\mathbf{Mg}$,
		in particular, $\nonm\leq\theta$.
	\end{enumerate}

\end{cor}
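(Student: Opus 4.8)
The plan is to apply the three cited facts---that small and $\sigma$-centered (and random) posets are good, that goodness is preserved along fsi, and the ``inside'' theorem producing $C_{[\gamma]^{<\theta}}\lq\R$---to each of the Polish relational systems $\mathbf{Lc}^*$, $\mathbf{Cn}$ and $\mathbf{Mg}$ in turn, and then to read off the three cardinal inequalities from the respective Tukey equivalences. First I would record two preliminary remarks. The monotonicity of goodness: if $\theta'\leq\theta$ and a poset is $\theta'$-$\R$-good, then it is $\theta$-$\R$-good, since a witnessing set $Y_0$ of size $<\theta'$ automatically has size $<\theta$. And since $\theta$ is regular uncountable we have $\theta\geq\aleph_1$, so every ccc poset is in particular $\theta$-cc.

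Next I would check, part by part, that every allowed iterand is $\theta$-$\R$-good for the relevant $\R$. For (1), with $\R=\mathbf{Lc}^*$: a poset of size $<\theta$ is $\theta$-$\mathbf{Lc}^*$-good by the fact on small posets; a subalgebra of random forcing is $\mathbf{Lc}^*$-good (i.e.\ $\aleph_1$-$\mathbf{Lc}^*$-good), hence $\theta$-$\mathbf{Lc}^*$-good by monotonicity; and a $\sigma$-centered poset is $\aleph_0$-centered, hence $\aleph_1$-$\mathbf{Lc}^*$-good and again $\theta$-$\mathbf{Lc}^*$-good. Parts (2) and (3) are analogous: for $\mathbf{Cn}$ I would use that small posets are $\theta$-$\mathbf{Cn}$-good and that $\sigma$-centered posets are $\aleph_1$-$\mathbf{Cn}$-good; for $\mathbf{Mg}$ only the small-poset case occurs, which is immediate. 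With the goodness of all iterands established (and noting the iterands are non-trivial, as the relevant forcings add reals), the ``inside'' Fact applies directly to the fsi of length $\gamma\geq\theta$ and yields that $\p$ forces $C_{[\gamma]^{<\theta}}\lq\R$ for $\R$ equal to $\mathbf{Lc}^*$, $\mathbf{Cn}$ and $\mathbf{Mg}$ respectively, which is the first assertion in each case.

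For the ``in particular'' clauses I would invoke the ``inside'' Corollary, which turns $C_{[\gamma]^{<\theta}}\lq\R$ into $\bb(\R)\leq\theta$, and then translate through the stated Tukey equivalences using that $\R\cong_T\R'$ gives $\bb(\R)=\bb(\R')$. The cases $\mathbf{Lc}^*\cong_T\n$ and $\mathbf{Mg}\cong_T C_\m$ are direct, giving $\bb(\mathbf{Lc}^*)=\bb(\n)=\addn$ and $\bb(\mathbf{Mg})=\bb(C_\m)=\nonm$. The one computation demanding care---and the step most prone to a slip---is $\mathbf{Cn}$: since $\mathbf{Cn}\cong_T C_\n^\bot$ and the standard duality gives $\bb(\R^\bot)=\dd(\R)$, one gets $\bb(\mathbf{Cn})=\bb(C_\n^\bot)=\dd(C_\n)=\covn$, so here the dual interchanges $\bb$ and $\dd$. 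Combining, $\p$ forces $\addn\leq\theta$, $\covn\leq\theta$ and $\nonm\leq\theta$ respectively. There is no deep obstacle: the corollary is a bookkeeping assembly of the preceding facts, and the only genuine points of attention are the monotonicity remark (to pass from $\aleph_1$- or $\mu^+$-goodness to $\theta$-goodness) and the correct handling of the dual in the $\mathbf{Cn}$ case.
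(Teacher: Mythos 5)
Your proposal is correct and is essentially the paper's intended argument: the paper gives no separate proof, presenting the corollary precisely as the assembly of the goodness facts (small/$\sigma$-centered/random iterands), the preservation of goodness under fsi, the ``inside'' theorem, and the Tukey equivalences $\mathbf{Lc}^*\cong_T\n$, $\mathbf{Cn}\cong_T C_\n^\bot$, $\mathbf{Mg}\cong_T C_\m$. Your explicit attention to the monotonicity of goodness in $\theta$ and to the dual flip $\bb(\mathbf{Cn})=\dd(C_\n)=\covn$ fills in exactly the routine details the paper leaves implicit.
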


	\section{Limit notions}\label{sec_uflimit}
	
	\subsection{Ultrafilter-limit and closed-ultrafilter-limit}
	We basically follow the presentation of \cite{Mej_Two_FAM} to describe the general theory of (closed-)ultrafilter-limits. Also, the original ideas are already in \cite{GMS16} and \cite{Yam24}. Thus, we often omit the proofs in this subsection. For details, see \cite{Yam24}.
\begin{dfn}(\cite[Section 5]{Mej19})\label{dfn_linked_class}
	Let $\Gamma$ be a class for subsets of posets,
	i.e., $\Gamma\in\prod_{\p}\mathcal{P}(\mathcal{P}(\p))$, a (class) function. (e.g., $\Gamma=\Lambda({\text{centered}})\coloneq$ ``centered'' is an example of a class for subsets of poset and in this case $\Gamma(\p)$ denotes the set of all centered subsets of $\p$ for each poset $\p$.)
	\begin{itemize}

	\item A poset $\p$ is $\mu$-$\Gamma$-linked if $\p$ is a union of $\leq\mu$-many subsets in $\Gamma(\p)$.
	As usual, when $\mu=\aleph_0$, we use ``$\sigma$-$\Gamma$-linked'' instead of ``$\aleph_0$-$\Gamma$-linked''.
	\item Abusing notation, we write ``$\Gamma\subseteq\Gamma^\prime$'' if $\Gamma(\p)\subseteq\Gamma^\prime(\p)$ holds for every poset $\p$.
    \end{itemize}
\end{dfn}


In this paper, an ``ultrafilter'' means a non-principal ultrafilter.
\begin{dfn}
	\label{dfn_UF_linked}
	
	Let $D$ be an ultrafilter on $\omega$ and $\p$ be a poset.
	\begin{enumerate}
		\item $Q\subseteq \p$ is $D$-lim-linked ($\in\Lambda^\mathrm{lim}_D(\p)$) if there exists a function $\lim^D\colon Q^\omega\to\p$ and a $\p$-name $\dot{D}^\prime$ of an ultrafilter extending $D$ such that for any countable sequence 
		$\bar{q}=\langle q_m:m<\omega\rangle\in Q^\omega$, 
		\begin{equation}
			\textstyle{\lim^D\bar{q}} \Vdash \{m<\omega:q_m \in \dot{G}\}\in \dot{D}^\prime.
		\end{equation}

		Moreover, if $\ran(\lim^D)\subseteq Q$, we say $Q$ is c-$D$-lim-linked (closed-$D$-lim-linked, $\in\Lambda^\mathrm{lim}_{\mathrm{c}D}(\p)$).
		\item $Q$ is (c-)uf-lim-linked (short for (closed-)ultrafilter-limit-linked) if $Q$ is (c-)$D$-lim-linked for every ultrafilter $D$.
		\item $\Lambda^\mathrm{lim}_\mathrm{uf}\coloneq\bigcap_D\Lambda^\mathrm{lim}_D$ and 
		$\Lambda^\mathrm{lim}_\mathrm{cuf}\coloneq\bigcap_D\Lambda^\mathrm{lim}_{\mathrm{c}D}$.

		
	\end{enumerate}

	We often say ``$\p$ has (c-)uf-limits'' instead of ``$\p$ is $\sigma$-(c-)uf-lim-linked''.

\end{dfn}

\begin{exa}
	Singletons are c-uf-lim-linked and hence every poset $\p$ is $|\p|$-c-uf-lim-linked. 
\end{exa}


\begin{dfn}
	\label{dfn_Gamma_iteration}
	\begin{itemize}
		\item A $\kappa$-$\Gamma$-iteration is a fsi $\p_\gamma=\langle(\p_\xi,\qd_\xi):\xi<\gamma\rangle $ with witnesses $\langle\p_\xi^-:\xi<\gamma\rangle$ and $\langle\dot{Q}_{\xi,\zeta}:\zeta<\theta_\xi,\xi<\gamma\rangle$ satisfying for all $\xi<\gamma$:
		\begin{enumerate}
			\item $\p^-_\xi\lessdot\p_\xi$.
			\item $\theta_\xi<\kappa$.
			\item \label{item_Q_is_P_minus_name}
			$\qd_\xi$ and $\langle\dot{Q}_{\xi,\zeta}:\zeta<\theta_\xi\rangle$ are $\p^-_\xi$-names and $\p^-_\xi$ forces that 
			$\bigcup_{\zeta<\theta_\xi}\dot{Q}_{\xi,\zeta}=\qd_\xi$ and $\dot{Q}_{\xi,\zeta}\in\Gamma(\qd_\xi)$ for each $\zeta<\theta_\xi$.
		\end{enumerate}
		\item $\xi<\gamma$ is a trivial stage if $\Vdash_{\p^-_\xi}|\dot{Q}_{\xi,\zeta}|=1$ for all $\zeta<\theta_\xi$. $S^-$ is the set of all trivial stages and $S^+\coloneq\gamma\setminus S^-$.
		\item A guardrail for the iteration is a function $h\in\prod_{\xi<\gamma}\theta_\xi$.
		\item $H\subseteq\prod_{\xi<\gamma}\theta_\xi$ is complete if any countable partial function in $\prod_{\xi<\gamma}\theta_\xi$ is extended to some (total) function in $H$.
		\item $\p^h_\eta$ is the set of conditions $p\in\p_\eta$ following $h$, i.e., for each $\xi\in\dom(p)$, $p(\xi)$ is a $\p^-_\xi$-name 
		and $\Vdash_{\p^-_\xi}p(\xi)\in \dot{Q}_{\xi,h(\xi)}$.  	
	\end{itemize}
\end{dfn}

%



\begin{lem}
	\label{cor_complete}
	If $\aleph_1\leq\mu\leq|\gamma|\leq2^\mu$ and $\mu^+=\kappa$, 
	then there exists a complete set of guardrails (of length $\gamma$) of size $\leq\mu^{\aleph_0}$.
\end{lem}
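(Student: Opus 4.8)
The plan is to exploit the hypothesis $|\gamma|\le2^\mu$ by coding the index set $\gamma$ into the space ${}^\mu2$ of functions $\mu\to2$, and then to use that any countably many distinct elements of ${}^\mu2$ are already separated on a countable set of coordinates. First I would note that $\kappa=\mu^+$ forces $\theta_\xi\le\mu$ for every $\xi<\gamma$, so a guardrail is just a function $h\colon\gamma\to\mu$ with $h(\xi)<\theta_\xi$, and a countable partial guardrail is such a partial function with countable domain. The naive strategy---pick one total extension per countable partial function---fails here, since there are $|\gamma|^{\aleph_0}\le(2^\mu)^{\aleph_0}=2^\mu$ possible countable domains and $2^\mu$ may exceed the target $\mu^{\aleph_0}$. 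The whole point is to cut the effective number of ``domain shapes'' down from $2^\mu$ to $\cc$.

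Fix an injection $\xi\mapsto x_\xi$ of $\gamma$ into ${}^\mu2$ (possible since $|\gamma|\le2^\mu$), and for countable $C\in[\mu]^{\le\aleph_0}$ write $r_C(\xi)\coloneq x_\xi\upharpoonright C\in{}^C2$. The key observation is that for any countable $d\subseteq\gamma$ there is a countable $C\subseteq\mu$ with $r_C\upharpoonright d$ injective: for each pair of distinct $\xi,\xi'\in d$ pick one coordinate witnessing $x_\xi\ne x_{\xi'}$ and collect these countably many coordinates into $C$. Thus every countable partial guardrail factors, on its domain, through some $r_C$ into the much smaller space ${}^C2$, which has size only $\cc\le\mu^{\aleph_0}$.

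Now I would build $H$ as follows. For each $C\in[\mu]^{\le\aleph_0}$ fix a family $\mathcal F_C$ of total functions ${}^C2\to\mu$ such that every countable partial function ${}^C2\to\mu$ extends to a member of $\mathcal F_C$; since there are only $\cc^{\aleph_0}\cdot\mu^{\aleph_0}=\mu^{\aleph_0}$ such partial functions, the naive ``one extension each'' already yields $|\mathcal F_C|\le\mu^{\aleph_0}$. For $\Phi\in\mathcal F_C$ define $h_{C,\Phi}\colon\gamma\to\mu$ by $h_{C,\Phi}(\xi)\coloneq\Phi(r_C(\xi))$ when this value is $<\theta_\xi$, and $h_{C,\Phi}(\xi)\coloneq0$ otherwise; the clamping guarantees $h_{C,\Phi}\in\prod_{\xi<\gamma}\theta_\xi$. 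Setting $H\coloneq\{h_{C,\Phi}:C\in[\mu]^{\le\aleph_0},\ \Phi\in\mathcal F_C\}$ gives $|H|\le\mu^{\aleph_0}\cdot\mu^{\aleph_0}=\mu^{\aleph_0}$.

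For completeness, given a countable partial guardrail $p\colon d\to\mu$ I would choose $C$ with $r_C\upharpoonright d$ injective, transport $p$ to the well-defined countable partial function $\bar p\colon r_C[d]\to\mu$, $\bar p(r_C(\xi))=p(\xi)$, and extend $\bar p$ to some $\Phi\in\mathcal F_C$. Then for $\xi\in d$ we have $\Phi(r_C(\xi))=p(\xi)<\theta_\xi$, so no clamping occurs and $h_{C,\Phi}(\xi)=p(\xi)$; hence $h_{C,\Phi}$ extends $p$. The step I expect to be delicate is the bookkeeping that makes the separating-coordinate reduction pay off: one must verify that collapsing the index set to ${}^C2$ replaces the fatal factor $|\gamma|^{\aleph_0}=2^\mu$ by $\cc\le\mu^{\aleph_0}$, and that the clamping---needed to respect the constraints $h(\xi)<\theta_\xi$ off the domain---does no harm on the domain precisely because $p$ already takes values below the relevant $\theta_\xi$.
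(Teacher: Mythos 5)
Your proof is correct. The paper itself gives no proof of this lemma (it defers to [Yam24], where the statement is obtained from the Engelking--Karłowicz theorem), and your argument --- coding $\gamma$ into ${}^\mu 2$, separating any countable $d\subseteq\gamma$ by a countable set $C$ of coordinates, and closing off under extensions of the $\mu^{\aleph_0}$-many countable partial functions on each ${}^C 2$ --- is exactly the standard proof of that theorem, including the correct cardinal bookkeeping ($\mu^{\aleph_0}$ choices of $C$ times $\mu^{\aleph_0}$ choices of $\Phi$) and the harmless clamping needed to land in $\prod_{\xi<\gamma}\theta_\xi$.
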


In this section, let $\Gamma_\mathrm{uf}$ represent $\Lambda^\mathrm{lim}_\mathrm{uf}$ or $\Lambda^\mathrm{lim}_\mathrm{cuf}$.
\begin{dfn}
	\label{dfn_UF_iteration}
	A $\kappa$-$\Gamma_\mathrm{uf}$-iteration has $\Gamma_\mathrm{uf}$-limits on $H$ if
	\begin{enumerate}
		\item $H\subseteq\prod_{\xi<\gamma}\theta_\xi$, a set of guardrails.
		\item For $h\in H$, $\langle \dot{D}^h_\xi:\xi\leq\gamma \rangle$ is a sequence such that $\dot{D}^h_\xi$ is a $\p_\xi$-name of a non-principal ultrafilter on $\omega$. 
		\item If $\xi<\eta\leq\gamma$, then $\Vdash_{\p_\eta}\dot{D}^h_\xi\subseteq\dot{D}^h_\eta$.
		
		\item \label{item_D^-}
		For $\xi<\gamma$, $\Vdash_{\p_\xi} (\dot{D}^h_\xi)^-\in V^{\p^-_\xi}$ where $(\dot{D}^h_\xi)^-\coloneq\dot{D}^h_\xi\cap V^{\p^-_\xi}$  if $ \xi\in S^+$, otherwise let $(\dot{D}^h_\xi)^-$ be an arbitrary ultrafilter in $V^{\p^-_\xi}$ (hence this item is trivially satisfied in this case).
		\item whenever $\langle \xi_m:m<\omega \rangle\subseteq \gamma$ and $\bar{q}=\langle \dot{q}_m:m<\omega\rangle$ satisfying 
		
		$\Vdash_{\p^-_{\xi_m}}\dot{q}_m\in\dot{Q}_{\xi_m,h(\xi_m)}$ for each $m<\omega$:
		\begin{enumerate}
			\item If $\langle \xi_m:m<\omega \rangle$ is constant with value $\xi$, then
			\begin{equation}
				\label{eq_constant}
				\Vdash_{\p_\xi}\textstyle{\lim^{(\dot{D}^h_\xi)^-}}\bar{q}\Vdash_{\qd_\xi}\{m<\omega:\dot{q}_m\in \dot{H}_\xi\}
				\in\dot{D}^h_{\xi+1}.
			\end{equation}
			($\dot{H}_\xi$ denotes the canonical name of $\qd_\xi$-generic filter over $V^{\p_\xi}$.)
			\item If $\langle \xi_m:m<\omega \rangle$ is increasing, then
			\begin{equation}
				\label{eq_increasing}
				\Vdash_{\p_\gamma}\{m<\omega:\dot{q}_m\in \dot{G}_\gamma\}\in\dot{D}^h_\gamma.
			\end{equation}
		\end{enumerate}
		
	\end{enumerate}
\end{dfn}

\begin{lem}
	\label{lem_uf_const_succ}
	Let $\p_{\gamma+1}$ be a $\kappa$-$\Gamma_\mathrm{uf}$-iteration (of length $\gamma+1$) and suppose $\p_\gamma=\p_{\gamma+1}\on\gamma$ has $\Gamma_\mathrm{uf}$-limits on $H$.
	If:
	\begin{equation}
		\label{eq_minus}
		\Vdash_{\p_\gamma} (\dot{D}^h_\gamma)^-\in V^{\p^-_\gamma}\text{ for all }h\in H,
	\end{equation}
	then we can find $\{\dot{D}^h_{\gamma+1}:h\in H\}$ witnessing that $\p_{\gamma+1}$ has $\Gamma_\mathrm{uf}$-limits on $H$.
\end{lem}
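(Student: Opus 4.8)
Since $\p_{\gamma+1}=\p_\gamma*\qd_\gamma$ and the data $\langle\dot D^h_\xi:\xi\le\gamma\rangle$ already witness that $\p_\gamma$ has $\Gamma_\mathrm{uf}$-limits on $H$, all I have to do is produce, for each $h\in H$, a $\p_{\gamma+1}$-name $\dot D^h_{\gamma+1}$ for a non-principal ultrafilter on $\omega$, and then check the clauses of Definition \ref{dfn_UF_iteration} for the length-$(\gamma+1)$ iteration. Fix $h$ and set $D^-\coloneq(\dot D^h_\gamma)^-$; by the hypothesis \eqref{eq_minus} this is (forced to be) an ultrafilter lying in $V^{\p^-_\gamma}$, so I may treat it as a genuine ultrafilter of that inner model. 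As $\dot Q_{\gamma,h(\gamma)}\in\Gamma_\mathrm{uf}(\qd_\gamma)$ over $V^{\p^-_\gamma}$, it is uf-lim-linked, hence in particular $D^-$-lim-linked; this supplies the limit function $\lim^{D^-}$ together with a $\qd_\gamma$-name $\dot D'$ (over $V^{\p^-_\gamma}$) for an ultrafilter extending $D^-$ such that $\lim^{D^-}\bar q\Vdash_{\qd_\gamma}\{m:\dot q_m\in\dot H_\gamma\}\in\dot D'$ for every $\bar q\in(\dot Q_{\gamma,h(\gamma)})^\omega$. The plan is to let $\dot D^h_{\gamma+1}$ name any non-principal ultrafilter of $V^{\p_{\gamma+1}}$ extending the filter generated by $\dot D^h_\gamma\cup\dot D'$.

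The whole point is that this generated filter is proper, so that such an ultrafilter exists; since both ingredients are filters, this amounts to showing that $A\cap B$ is infinite whenever $A\in\dot D^h_\gamma$ and $B\in\dot D'$. Write $W\coloneq V^{\p^-_\gamma}$; since $\qd_\gamma$ is a $\p^-_\gamma$-name, the product lemma applied to the quotient $\p_\gamma/\p^-_\gamma$ and $\qd_\gamma$ shows that $W_1\coloneq V^{\p_\gamma}$ and $W_2\coloneq V^{\p^-_\gamma*\qd_\gamma}$ are mutually generic over $W$, their generics together giving $V^{\p_{\gamma+1}}$, which is thus the $(\p_\gamma/\p^-_\gamma)\times\qd_\gamma$-extension of $W$. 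Now $\dot D^h_\gamma\in W_1$ and $\dot D'\in W_2$ both extend the common ground-model ultrafilter $D^-\in W$. Represent $A,B$ by a $(\p_\gamma/\p^-_\gamma)$-name $\dot A$ and a $\qd_\gamma$-name $\dot B$ over $W$, and suppose toward a contradiction that some $(r,q)$ forced $A\cap B\subseteq n$ (strengthening, we may assume $r\Vdash\dot A\in\dot D^h_\gamma$ and $q\Vdash\dot B\in\dot D'$). In $W$ form the sets of possible values $A^\ast\coloneq\{k:\exists r'\le r,\ r'\Vdash k\in\dot A\}$ and $B^\ast\coloneq\{k:\exists q'\le q,\ q'\Vdash k\in\dot B\}$. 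Both lie in $D^-$: e.g.\ if $\omega\setminus A^\ast\in D^-$ then, as $r\Vdash\dot A\subseteq A^\ast$, $r$ would force $\dot A$ disjoint from the $D^-$-set $\omega\setminus A^\ast$, contradicting that $\dot A$ (being in the ultrafilter $\dot D^h_\gamma\supseteq D^-$) meets every member of $D^-$ infinitely; likewise for $B^\ast$. Hence $A^\ast\cap B^\ast\in D^-$ is infinite, and choosing $k\in A^\ast\cap B^\ast$ with $k\ge n$ yields $(r',q')\le(r,q)$ forcing $k\in\dot A\cap\dot B$ — a contradiction. This is exactly the step where \eqref{eq_minus} is indispensable: it makes $D^-$ an actual ultrafilter of the common inner model $W$ over which $W_1$ and $W_2$ are mutually generic.

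It remains to check the clauses of Definition \ref{dfn_UF_iteration} for length $\gamma+1$. Ultrafilter-hood and non-principality hold by construction, the generated filter already containing the non-principal $\dot D^h_\gamma$. Monotonicity is immediate from $\dot D^h_\xi\subseteq\dot D^h_\gamma\subseteq\dot D^h_{\gamma+1}$ for $\xi\le\gamma$, and the clause on $(\dot D^h_\xi)^-$ holds for $\xi<\gamma$ by inheritance and for $\xi=\gamma$ by \eqref{eq_minus}. For the constant-sequence clause \eqref{eq_constant} at $\xi=\gamma$ we have, by $D^-$-lim-linkedness, $\lim^{D^-}\bar q\Vdash_{\qd_\gamma}\{m:\dot q_m\in\dot H_\gamma\}\in\dot D'\subseteq\dot D^h_{\gamma+1}$, exactly as required (for $\xi<\gamma$ it is inherited). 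Finally, any strictly increasing $\omega$-sequence of stages in $\gamma+1$ has all its terms $<\gamma$, since an $\omega$-sequence does not attain its supremum; so it is a sequence in $\gamma$, and \eqref{eq_increasing} follows from the corresponding clause for $\p_\gamma$ together with $\dot D^h_\gamma\subseteq\dot D^h_{\gamma+1}$, using that a stage-$<\gamma$ condition lies in $\dot G_{\gamma+1}$ iff it lies in $\dot G_\gamma$. The one genuinely nontrivial point is the finite-intersection argument of the previous paragraph; everything else is bookkeeping.
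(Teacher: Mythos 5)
The paper itself gives no proof of this lemma (it is among the statements whose proofs are omitted with a pointer to \cite{Yam24} and \cite{Mej_Two_FAM}), so there is nothing in-text to compare against line by line; your argument is correct and is precisely the standard one from those sources. The two key points you isolate --- that since $\qd_\gamma$ is a $\p^-_\gamma$-name, $V^{\p_{\gamma+1}}$ is the $(\p_\gamma/\p^-_\gamma)\times\qd_\gamma$-extension of $W\coloneq V^{\p^-_\gamma}$, and that both $\dot{D}^h_\gamma$ and $\dot{D}'$ extend the common ultrafilter $(\dot{D}^h_\gamma)^-\in W$, so that mutual genericity together with the possible-value sets $A^*,B^*$ gives the finite intersection property with infinite intersections --- are exactly what hypothesis \eqref{eq_minus} and the requirement \eqref{item_Q_is_P_minus_name} of Definition \ref{dfn_Gamma_iteration} that $\qd_\xi$ be a $\p^-_\xi$-name are designed to enable, and your verification of the remaining clauses (inheritance below $\gamma$, the constant case via $\dot{D}'\subseteq\dot{D}^h_{\gamma+1}$, and the observation that a strictly increasing $\omega$-sequence in $\gamma+1$ lies in $\gamma$) is the intended bookkeeping. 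One minor imprecision: $(\dot{D}^h_\gamma)^-$, and hence the associated $\lim^{D^-}$ and $\dot{D}'$, are $\p_\gamma$-names rather than fixed objects of $W$ (which ultrafilter of $W$ they denote depends on the $\p_\gamma$-generic); this is harmless --- one fixes a $\p_\gamma$-generic first, after which $D^-$ and $\dot{D}'$ are genuine elements of $W$ and your product argument over $W$ (and the persistence of the forced property of $\dot{D}'$ from $W$ to $V^{\p_\gamma}$, which holds since any $\qd_\gamma$-generic over $V^{\p_\gamma}$ is generic over $W$ and the relevant statements are absolute) goes through verbatim.
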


The limit step of the construction of ultrafilters is realized if we resort to centeredness.

\begin{lem}
	\label{lem_uf_const_all}
	Let $\gamma$ be limit and 
	$\p_\gamma$ be a $\kappa$-$\left(\Lambda(\mathrm{centered})\cap\Gamma_\mathrm{uf}\right) $-iteration.
	If $\langle \dot{D}^h_\xi:\xi<\gamma, h\in H\rangle$ witnesses that for any $\xi<\gamma$, $\p_\xi=\p_\gamma\on\xi$ has $\Gamma_\mathrm{uf}$-limits on $H$,  
	then we can find $\langle\dot{D}^h_\gamma:h\in H\rangle$ such that $\langle \dot{D}^h_\xi:\xi\leq\gamma, h\in H\rangle$ witnesses $\p_\gamma$ has $\Gamma_\mathrm{uf}$-limits on $H$.
\end{lem}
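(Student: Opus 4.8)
The plan is, for each $h\in H$, to build the limit ultrafilter $\dot D^h_\gamma$ as an extension of the increasing union $\dot D^h_{<\gamma}\coloneq\bigcup_{\xi<\gamma}\dot D^h_\xi$ and then to check the clauses of Definition \ref{dfn_UF_iteration} for $\p_\gamma$. Every clause that only mentions stages strictly below $\gamma$ is inherited verbatim from the hypothesis: clauses (1)--(4) restricted to $\xi<\gamma$, and the constant-sequence clause (5a), which refers only to $\dot D^h_{\xi+1}$ with $\xi+1<\gamma$ because $\gamma$ is limit. Thus the only genuinely new points are that $\dot D^h_\gamma$ is a non-principal ultrafilter name containing every $\dot D^h_\xi$ (clause (2) at $\gamma$ and clause (3) at $\eta=\gamma$), and the increasing-sequence clause (5b) for sequences $\langle\xi_m\rangle\subseteq\gamma$.

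First I would split on $\cf(\gamma)$. If $\cf(\gamma)>\omega$, then since $\p_\gamma$ is a ccc finite support iteration every subset of $\omega$ in $V^{\p_\gamma}$ already lies in some $V^{\p_\xi}$ with $\xi<\gamma$; hence $\dot D^h_{<\gamma}$ decides each of them and is itself a non-principal ultrafilter, and I set $\dot D^h_\gamma\coloneq\dot D^h_{<\gamma}$. In this case every increasing $\langle\xi_m\rangle\subseteq\gamma$ is bounded, say below $\eta<\gamma$, so each $\dot q_m$ is a one-step condition below $\eta$ and $\{m:\dot q_m\in\dot G_\gamma\}=\{m:\dot q_m\in\dot G_\eta\}\in\dot D^h_\eta\subseteq\dot D^h_\gamma$ by the hypothesis for $\p_\eta$ and clause (3); so (5b) holds and no centeredness is needed.

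The substantial case is $\cf(\gamma)=\omega$, which is where I expect the real work: new subsets of $\omega$ may be added cofinally (so $\dot D^h_{<\gamma}$ need not be an ultrafilter) and genuinely cofinal increasing sequences occur. Here I would let $\dot{\mathcal F}$ be the family generated by $\dot D^h_{<\gamma}$ together with all trace sets $\{m:\dot q_m\in\dot G_\gamma\}$ coming from increasing $\langle\xi_m\rangle\subseteq\gamma$ with $\Vdash_{\p^-_{\xi_m}}\dot q_m\in\dot Q_{\xi_m,h(\xi_m)}$, and take $\dot D^h_\gamma$ to be, via the maximal principle, a name for an ultrafilter extending $\dot{\mathcal F}$. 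As $\dot{\mathcal F}\supseteq\dot D^h_{<\gamma}$ contains all cofinite sets it is non-principal, and clauses (3) and (5b) then hold by construction, so everything reduces to forcing that $\dot{\mathcal F}$ has the finite intersection property.

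The crux, and the one place the centeredness of $\dot Q_{\xi,h(\xi)}$ enters, is the density statement: given $A\in\dot D^h_{\xi_0}$ with $\xi_0<\gamma$ (one set suffices, since $\dot D^h_{<\gamma}$ is a filter), finitely many increasing sequences $\langle\xi^j_m\rangle$ for $j<k$ with witnesses $\dot q^j_m$, an arbitrary $p\in\p_\gamma$ and $N<\omega$, I must produce $p'\leq p$ and $m>N$ with $p'\Vdash m\in A$ and $p'\Vdash\dot q^j_m\in\dot G_\gamma$ for all $j<k$. Putting $\beta\coloneq\max(\supp(p)\cup\{\xi_0\})<\gamma$, cofinality of the sequences makes $E\coloneq\{m:\forall j<k\ \xi^j_m>\beta\}$ cofinite, so from $\Vdash_{\p_{\xi_0}}$``$A$ is infinite'' I can extend $p\on\xi_0$ to a condition forcing some $m\in A\cap E$ with $m>N$. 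For this fixed $m$, at each coordinate of the form $\xi=\xi^j_m$ the finitely many names $\{\dot q^{j'}_m:\xi^{j'}_m=\xi\}$ all lie in the centered set $\dot Q_{\xi,h(\xi)}$ and hence have a common lower bound $\dot r_\xi$; since each such $\xi>\beta\geq\max\supp(p)$, grafting $p'(\xi)\coloneq\dot r_\xi$ onto $p$ creates no conflict, and the resulting $p'\leq p$ satisfies $p'(\xi^j_m)\leq\dot q^j_m$, forcing every $\dot q^j_m$ into $\dot G_\gamma$ while still forcing $m\in A$ through its unchanged part below $\xi_0$. This establishes the finite intersection property, and carrying out the same construction for each $h\in H$ yields the required witnessing sequence.
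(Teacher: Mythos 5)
Your treatment of the substantial case --- $\cf(\gamma)=\omega$ with sequences converging to $\gamma$ --- is essentially the paper's own proof. The paper likewise reduces the entire lemma to forcing that $\bigcup_{\xi<\gamma}\dot{D}^h_\xi\cup\{\dot{A}(\bar{q}):\bar{q}\in S\}$ has the (strong) finite intersection property, and its key step is exactly yours: choose $m$ so large that every coordinate $\xi^j_m$ lies above the support of the given condition and above the stage $\xi_0$ carrying the filter set, use the stage-$\xi_0$ ultrafilter to find a condition in $\p_{\xi_0}$ forcing $m\in A$, amalgamate the finitely many names $\dot{q}^j_m$ landing at one and the same coordinate by centeredness of the component $\dot{Q}_{\xi,h(\xi)}$, and attach the resulting names at those coordinates. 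That the paper phrases this as a contradiction while you phrase it as a density verification is cosmetic.

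There is, however, a genuine error in your $\cf(\gamma)>\omega$ case. You set $\dot{D}^h_\gamma\coloneq\bigcup_{\xi<\gamma}\dot{D}^h_\xi$ and justify that this is an ultrafilter name by the claim that ``$\p_\gamma$ is a ccc finite support iteration,'' so every subset of $\omega$ in $V^{\p_\gamma}$ lies in some $V^{\p_\xi}$. But the lemma is about arbitrary $\kappa$-$\left(\Lambda(\mathrm{centered})\cap\Gamma_\mathrm{uf}\right)$-iterations: the iterands are only unions of $\theta_\xi<\kappa$ many centered pieces with $\theta_\xi$ possibly uncountable, so $\p_\gamma$ is only $\kappa$-cc (as remarked after Theorem \ref{thm_uf_limit_keeps_b_small}), not ccc. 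The bounded-stage capture of reals needs antichains of size $<\cf(\gamma)$, hence only works when $\cf(\gamma)\geq\kappa$; in the range $\omega<\cf(\gamma)<\kappa$ it can fail outright. (Since singletons are c-uf-lim-linked, any poset of size $<\kappa$ is an admissible iterand; an iterand collapsing $\omega_1=\cf(\gamma)$ makes $\cf(\gamma)=\omega$ in the extension, after which the finite support limit adds a real lying in no $V^{\p_\xi}$.) So $\bigcup_{\xi<\gamma}\dot{D}^h_\xi$ need not be forced to be an ultrafilter, and clause (2) fails for your choice. The repair is to run your $\cf(\gamma)=\omega$ construction uniformly in all cases, which is what the paper does: always extend $\bigcup_{\xi<\gamma}\dot{D}^h_\xi\cup\{\dot{A}(\bar{q})\}$ to an ultrafilter name by the maximal principle, noting that whenever an increasing sequence is bounded by some $\eta<\gamma$ its trace set already lies in $\dot{D}^h_\eta$ by clause (5b) for $\p_\eta$ --- your own observation --- so the SFIP check is trivial for such sequences. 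The same absorption remark is also needed inside your $\cf(\gamma)=\omega$ case, since you put the trace sets of \emph{all} increasing sequences into $\dot{\mathcal{F}}$ but your density argument (the step where $E$ is cofinite) presumes the sequences converge to $\gamma$; the paper glosses this identical point with ``by excluding triviality,'' so that part is an imprecision rather than a gap.
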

We give a proof since this lemma plays a crucial role in Section \ref{sec_E}.

\begin{proof}
	Let $h\in H$ be arbitrary and $S$ be the collection of $\bar{q}=\langle \dot{q}_m:m<\omega\rangle$ such that for some increasing $\langle \xi_m<\gamma:m<\omega \rangle$, $\Vdash_{\p^-_{\xi_m}}\dot{q}_m\in\dot{Q}_{\xi_m,h(\xi_m)}$ holds for each $m<\omega$.
	For $\bar{q}\in S$, let $\dot{A}(\bar{q})\coloneq\{m<\omega:\dot{q}_m\in\dot{G}_\gamma\}$.
	By excluding triviality, we may assume that we are in the case $\cf(\gamma)=\omega$ (hence $\xi_m\to\gamma$) and all we have to show is the following:
	\begin{equation}
		\Vdash_{\p_\gamma}\text{``}\bigcup_{\xi<\gamma}\dot{D}^h_\xi\cup\{\dot{A}(\bar{q}):\bar{q}\in S\}\text{ has SFIP}\text{''}.
	\end{equation} 
	If not, there exist $p\in\p_\gamma$, $\xi<\gamma$, $\p_\xi$-name $\dot{A}$ of an element of $\dot{D}^h_\xi$, $\{\bar{q}^i=\langle \dot{q}^i_m:m<\omega\rangle:i<n\}\in[S]^{<\omega}$ and increasing ordinals $\langle \xi_m^i<\gamma:m<\omega \rangle$ for $i<n$ such that $\Vdash_{\p^-_{\xi^i_m}}\dot{q}_m^i\in\dot{Q}_{\xi^i_m,h(\xi^i_m)}$ holds for $m<\omega$ and $i<n$ and the following holds:
	\begin{equation}
		\label{eq_limit_SFIP_contra}
		p\Vdash_{\p_\gamma}\dot{A}\cap \bigcap_{i<n} \dot{A}(\bar{q}^i)=\emptyset.
	\end{equation}
	We may assume that $p\in\p_\xi$.
	Since all $\langle \xi^i_m<\gamma:m<\omega \rangle$ are increasing and converge to $\gamma$, there is $m_0<\omega$ such that $\xi_m^i>\xi$ for any $m>m_0$ and $i<n$.
	By Induction Hypothesis, $p\Vdash_{\p_\xi}\text{``}\dot{D}^h_\xi$ is  an ultrafilter'' and hence we can pick $q\leq_{\p_\xi} p$ and $m>m_0$ 
	such that $q\Vdash_{\p_\xi}m\in\dot{A}$.
	Let us reorder $\{\xi_m^i:i<n\}=\{\xi^0<\cdots<\xi^{l-1}\}$.
	Inducting on $j<l$, we construct $q_j\in\p_{\xi_j}$. 
	Let $q_{-1}\coloneq q$ and $j<l$ and assume we have constructed $q_{j-1}$.
	Let $I_j\coloneq\{i<n:\xi_m^i=\xi^j\}$.
	Since $\p_{\xi^j}$ forces that all $\dot{q}^i_m$ for $i\in I_j$ are in the same centered component $\dot{Q}_{\xi^j,h(\xi^j)}$, we can pick $p_j\leq q_{j-1}$ in $\p_{\xi^j}$ and 
	a $\p_{\xi^j}$-name $\dot{q}_j$ of a condition in $\qd_{\xi^j}$ such that for each $i\in I_j$, 
	$p_j\Vdash_{\p_{\xi^j}}\dot{q}_j\leq\dot{q}^i_m$.
	Let $q_j\coloneq p_j^\frown\dot{q}_j$.
	By construction, $q^\prime\coloneq q_{l-1}$ satisfies $q^\prime\leq q\leq p$ and $q^\prime\on\xi_m^i\Vdash_{\p_{\xi_m}} q^\prime(\xi_m^i)\leq \dot{q}^i_m$ for all $i<n$, so in particular,  $q^\prime\Vdash_{\p_\gamma}m\in\dot{A}\cap \bigcap_{i<n} \dot{A}(\bar{q}^i)$, which contradicts \eqref{eq_limit_SFIP_contra}.
\end{proof}

Ultrafilter-limits keep $\bb$ small and closed-ultrafilter-limits keep $\eeb$ small:

\begin{thm}(\cite[Main Lemma 4.6]{GMS16}, \cite[Lemma 1.31.]{GKS})
	\label{thm_uf_limit_keeps_b_small}
	Consider the case $\Gamma_\mathrm{uf}=\Lambda^\mathrm{lim}_\mathrm{uf}$.
	
	Assume:
	\begin{itemize}
		\item $\kappa$ is uncountable regular and $\kappa<\gamma$.
		\item $H$ is complete and has size $<\kappa$.
	\end{itemize} 
	Then, $\p_\gamma$ forces $C_{[\gamma]^{<\kappa}}\lq \mathbf{D}$, in particular, $\bb\leq\kappa$. (Note that since a set of conditions following a common guardrail is centered, $\p_\gamma$ is $\kappa$-cc and hence preserves all cardinals $\geq\kappa$.) 
\end{thm}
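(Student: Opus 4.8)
The plan is to force the ``inside'' relation $C_{[\gamma]^{<\kappa}}\lq\mathbf{D}$; the ``in particular'' clause then follows from Fact~\ref{Tukey order and b and d}(2), since $\bb(C_{[\gamma]^{<\kappa}})=\non([\gamma]^{<\kappa})=\kappa$ (using $\kappa\leq|\gamma|$). By Fact~\ref{fac_Tukey_order_equivalence_condition}(\ref{item_small_equiv}), with index set $\gamma$ and $\theta=\kappa$, it suffices to produce $\p_\gamma$-names $\langle\dot{x}_\alpha:\alpha<\gamma\rangle$ for elements of $\oo$ such that $\p_\gamma$ forces every $y\in\oo$ to satisfy $\dot{x}_\alpha\leq^*y$ for fewer than $\kappa$ many $\alpha$. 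I would let $\dot{x}_\alpha$ be the canonical unbounded real added by the $\alpha$-th iterand at each non-trivial stage $\alpha\in S^+$ (so that $\dot{x}_\alpha$ is $\leq^*$-unbounded over $V^{\p^-_\alpha}$); assuming, as in the applications, that $S^+$ is cofinal with $|S^+|=|\gamma|$, the trivial stages are absorbed by a fixed injection $S^-\hookrightarrow S^+$ and may be ignored.

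Fix a $\p_\gamma$-name $\dot{y}$ for a real. As $\p_\gamma$ is $\kappa$-cc and $\kappa$ is regular uncountable, $\dot{y}$ is (equivalent to) a name depending only on a set $S_{\dot y}\subseteq\gamma$ of coordinates with $|S_{\dot y}|<\kappa$. If $\sup S_{\dot y}<\gamma$, then $\dot{y}$ is a $\p_\beta$-name for some $\beta<\gamma$, and for every non-trivial $\alpha\geq\beta$ the unboundedness of $\dot{x}_\alpha$ over $V^{\p_\beta}$ gives $\dot{x}_\alpha\not\leq^*\dot{y}$; hence $\{\alpha:\dot{x}_\alpha\leq^*\dot{y}\}$ is contained in $\beta$ and has size $<\kappa$. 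The real work is the case $\sup S_{\dot y}=\gamma$, which (exactly as in the proof of Lemma~\ref{lem_uf_const_all}) I reduce to $\cf(\gamma)=\omega$: here a single $\dot{y}$ of cofinal support could a priori react to each late $\dot{x}_\alpha$, and this is precisely what the ultrafilter-limits must rule out.

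So suppose towards a contradiction that some $p\in\p_\gamma$ forces $|\{\alpha:\dot{x}_\alpha\leq^*\dot{y}\}|\geq\kappa$. First I thin out: every condition extends, by completeness of $H$ applied to its finite guardrail constraint, to one following some $h\in H$; since $\kappa$ is regular and $|H|<\kappa$, I fix a single $h\in H$, a single witness $k<\omega$, and a set $B$ of size $\kappa$ with conditions $p_\alpha\leq p$ in $\p^h_\gamma$ ($\alpha\in B$, $\alpha\in\supp(p_\alpha)$) such that $p_\alpha\Vdash\forall n\geq k\,(\dot{x}_\alpha(n)\leq\dot{y}(n))$. A $\Delta$-system refinement of the finite supports (using the $\kappa$-cc to keep the common root part below a fixed condition) lets me assume the tails of the $p_\alpha$ occupy strictly increasing stages. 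Choosing an increasing $\langle\alpha_m:m<\omega\rangle\subseteq B$ cofinal in $\gamma$ and setting $\dot{q}_m\coloneq p_{\alpha_m}(\alpha_m)\in\dot{Q}_{\alpha_m,h(\alpha_m)}$, the increasing-limit clause \eqref{eq_increasing} forces $\{m:\dot{q}_m\in\dot{G}_\gamma\}\in\dot{D}^h_\gamma$, in particular infinite.

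The step I expect to be the main obstacle is converting ``infinitely many tails realized'' into an outright contradiction with ``$\dot{x}_{\alpha_m}\leq^*\dot{y}$''. The mechanism I would pursue is a reflection to the intermediate models: for each $m$ with $\dot{q}_m$ in the generic, the stage-$\alpha_m$ generic realizes $p_{\alpha_m}(\alpha_m)$, while clause \eqref{item_D^-} (that $(\dot{D}^h_{\alpha_m})^-\in V^{\p^-_{\alpha_m}}$), the coherence $\dot{D}^h_\xi\subseteq\dot{D}^h_\eta$, and the constant-limit clause \eqref{eq_constant} should let me reflect the constraint $\dot{x}_{\alpha_m}\leq^*\dot{y}$ down to $V^{\p^-_{\alpha_m}}$, producing there a real $\leq^*$-above $\dot{x}_{\alpha_m}$ and so contradicting the unboundedness of $\dot{x}_{\alpha_m}$ over $V^{\p^-_{\alpha_m}}$. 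Making this reflection uniform across the infinitely many $m$, so that one generic branch simultaneously witnesses the domination by $\dot{y}$ and the forced largeness of $\dot{x}_{\alpha_m}$, is the delicate heart of the argument; it is exactly here that the strong-finite-intersection-property bookkeeping from the proof of Lemma~\ref{lem_uf_const_all}, which guarantees that the limit ultrafilters $\dot{D}^h_\xi$ are genuinely ultrafilters, is indispensable.
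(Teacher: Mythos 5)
Your overall frame --- realizing $C_{[\gamma]^{<\kappa}}\lq\mathbf{D}$ via Fact~\ref{fac_Tukey_order_equivalence_condition}(\ref{item_small_equiv}) by exhibiting $\gamma$-many challenges that no single real dominates $\kappa$-often --- is the right one (note the paper itself gives no proof of this theorem, deferring to \cite{GMS16} and \cite{GKS}, so I am comparing against those arguments). But two steps are genuinely broken. First, your Case 1 contains a non sequitur, and it is not a side case. From ``$\{\alpha:\dot x_\alpha\leq^*\dot y\}$ is contained in $\beta$'' you infer ``has size $<\kappa$''; since $\kappa<\gamma$, the ordinal $\beta$ may well have cardinality $\geq\kappa$ (e.g.\ $\beta=\kappa+1$), so the containment gives nothing. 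Worse, in the paper's intended application $\gamma=\lambda_7$ is regular with $\cf(\gamma)>\kappa$, so by $\kappa$-cc \emph{every} $\p_\gamma$-name of a real has support bounded below $\gamma$: your ``real work'' case $\sup S_{\dot y}=\gamma$ with $\cf(\gamma)=\omega$ never occurs, and your entire proof degenerates to the broken Case 1. The actual content of the theorem is exactly the situation you discard: a real added at a late stage $\beta$ must not $\leq^*$-dominate $\kappa$-many of the challenges indexed \emph{below} $\beta$, and unboundedness of the later $\dot x_\alpha$ over $V^{\p_\beta}$ is irrelevant to that.

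Second, the conversion step you yourself flag as ``the main obstacle'' is indeed missing, and the mechanism you sketch cannot close it. Clause \eqref{eq_increasing} applied to the single coordinates $\dot q_m=p_{\alpha_m}(\alpha_m)$ only forces ultrafilter-many of these \emph{one-coordinate} conditions into $\dot G_\gamma$; the statements ``$\forall n\geq k\ (\dot x_{\alpha_m}(n)\leq\dot y(n))$'' are forced by the \emph{whole} conditions $p_{\alpha_m}$, so nothing about $\dot y$ follows. Your proposed repair --- reflecting ``$\dot x_{\alpha_m}\leq^*\dot y$'' into $V^{\p^-_{\alpha_m}}$ and contradicting unboundedness there --- is not meaningful: $\dot y$ is not a $\p^-_{\alpha_m}$-name (it need not even be a $\p_{\alpha_m}$-name), and unboundedness of $\dot x_{\alpha_m}$ over $V^{\p^-_{\alpha_m}}$ only speaks about reals lying in that model. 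What the cited proofs actually do is build, from the $\Delta$-system root $\nabla$, a single limit condition $q^*$ with $q^*(\xi)=\lim^{(\dot D^h_\xi)^-}\langle p_{\alpha_m}(\xi):m<\omega\rangle$ for $\xi\in\nabla$ (the constant clause \eqref{eq_constant}), combined with the increasing clause for the supports above the root; this $q^*$ forces $\{m:p_{\alpha_m}\in\dot G_\gamma\}\in\dot D^h_\gamma$, hence infinite, for the \emph{full} conditions. The final contradiction then requires the challenges to be chosen so that no real can dominate infinitely many of them with one common modulus $k$ --- Cohen reals added by the iteration (or a ground-model family with this uniform unboundedness property) provide this by a density argument. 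Your challenges do not: they need not even exist, since in this paper's own construction the $S_6$-iterand $\widetilde{\mathbb E}$ is a subforcing of random forcing by Fact~\ref{fac_E}\eqref{item_fac_E_e}, hence $\oo$-bounding, and adds no unbounded real at all.
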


\begin{thm}(\cite[Main Lemma 3.2.4]{Yam24})
	\label{thm_Main_Lemma}
	Consider the case $\Gamma_\mathrm{uf}=\Lambda^\mathrm{lim}_\mathrm{cuf}$.
	
	Assume:
	\begin{itemize}
		\item $\kappa$ is uncountable regular and $\kappa<\gamma$.
		\item $H$ is complete and has size $<\kappa$.
	\end{itemize} 
	Then, $\p_\gamma$ forces $C_{[\gamma]^{<\kappa}}\lq \mathbf{BPR}$, in particular, $\ee\leq\eeb\leq\kappa$.
\end{thm}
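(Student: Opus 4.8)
The plan is to run the argument of Theorem~\ref{thm_uf_limit_keeps_b_small} essentially verbatim, replacing the relational system $\mathbf{D}$ by $\mathbf{BPR}=\langle\oo,Pred,\bpd\rangle$ and replacing plain uf-limits by closed ones; I will indicate where the closedness is indispensable and treat that point as the real content. By Fact~\ref{fac_Tukey_order_equivalence_condition}(\ref{item_small_equiv}) it suffices to produce a sequence $\langle\dot c_\xi:\xi\in S^+\rangle$ of $\p_\gamma$-names for reals in $\oo$ — namely the (evading) generic reals added at the nontrivial stages, so that, $|S^+|=\gamma$ being the case in hand, this yields $C_{[\gamma]^{<\kappa}}\lq\mathbf{BPR}$ — such that $\p_\gamma$ forces every predictor to bounding-predict fewer than $\kappa$ many of them. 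As in Theorem~\ref{thm_uf_limit_keeps_b_small}, since a set of conditions following a common guardrail is centered and $|H|<\kappa$, the poset $\p_\gamma$ is $\kappa$-cc, so it preserves cardinals $\ge\kappa$ and the assertion is meaningful.

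Toward a contradiction I would assume that some condition $p$ forces a name $\dot\pi$ for a predictor to bounding-predict $\kappa$ many $\dot c_\xi$. Using that $\p_\gamma$ is $\kappa$-cc and that $\kappa$ is regular, I extract a set $A\in[S^+]^\kappa$ together with conditions $p_\xi\le p$ such that $p_\xi\Vdash$ ``$\dot\pi$ bounding-predicts $\dot c_\xi$'' for each $\xi\in A$. Since the iteration has finite support, each $p_\xi$ has finite support and hence, by completeness of $H$, follows some guardrail in $H$; as $|H|<\kappa$ and $\kappa$ is regular, a pigeonhole thins $A$ to $A'\in[A]^\kappa$ all of whose conditions follow a single guardrail $h\in H$. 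This reduction is exactly the one in the $\bb$ case.

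The heart of the proof is the use of the closed-$\dot D^h_\gamma$-limit to manufacture a single condition that refutes the forced bounding-predictions. I fix the ultrafilter name $\dot D^h_\gamma$ furnished by the hypothesis that $\p_\gamma$ has $\Lambda^\mathrm{lim}_\mathrm{cuf}$-limits on $H$ (built through the successor and limit steps, Lemmas~\ref{lem_uf_const_succ} and~\ref{lem_uf_const_all}), choose an increasing sequence $\langle\xi_m:m<\omega\rangle\subseteq A'$, and — after localizing below each $\xi_m$ the part of the name $\dot\pi$ relevant to a chosen coordinate, as in the GMS16 argument — strengthen the $\xi_m$-th coordinate of $p_{\xi_m}$ to a $\p^-_{\xi_m}$-name $\dot q_m$ lying in the same closed-uf-lim-linked piece $\dot Q_{\xi_m,h(\xi_m)}$ that forces the generic $\dot c_{\xi_m}$ to exceed $\dot\pi$ at a coordinate in the predictor's domain. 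Applying the increasing-case clause~\eqref{eq_increasing} to $\bar q=\langle\dot q_m:m<\omega\rangle$, the limit forces $\{m:\dot q_m\in\dot G_\gamma\}\in\dot D^h_\gamma$; this set is infinite, so infinitely many of the chosen exceedances are realized at once, contradicting that $\dot\pi$ bounding-predicts each $\dot c_{\xi_m}$.

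The single place where closedness is used — and the step I expect to be the main obstacle — is guaranteeing that the limit produced by~\eqref{eq_increasing} still forces its generics to be large: because $\ran(\lim)\subseteq Q$, the limit remains inside the linked piece $\dot Q_{\xi_m,h(\xi_m)}$ and therefore retains the promise that makes $\dot c_{\xi_m}$ evade $\dot\pi$, whereas a plain uf-limit (which suffices for $\mathbf{D}$, unboundedness being a merely cofinal demand) may drop out of the piece and lose this promise. This is exactly why the $\bb$ argument of Theorem~\ref{thm_uf_limit_keeps_b_small} needs only uf-limits while the present statement needs closed ones, and verifying it — together with the bookkeeping that localizes $\dot\pi$ coordinatewise — is where the real work lies; I would import both from \cite{Yam24}. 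The displayed conclusion then follows formally: since $\bb(C_{[\gamma]^{<\kappa}})=\kappa$, Fact~\ref{Tukey order and b and d}(2) gives $\eeb=\bb(\mathbf{BPR})\le\kappa$, and the identity maps witness $\mathbf{BPR}\lq\mathbf{PR}$ (as $f\sqsubset^{\mathrm p}\pi$ implies $f\bpd\pi$), whence $\ee=\bb(\mathbf{PR})\le\bb(\mathbf{BPR})=\eeb$ by the same fact, yielding $\ee\le\eeb\le\kappa$.
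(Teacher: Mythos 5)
Your proposal is being compared against the argument of \cite[Main Lemma 3.2.4]{Yam24}, since this note states the theorem with a citation and omits the proof. Your reduction steps (Fact \ref{fac_Tukey_order_equivalence_condition}\eqref{item_small_equiv}, $\kappa$-cc, pigeonhole over the $<\kappa$ guardrails, $\Delta$-system thinning) are the standard ones and are fine, as is the final formal deduction of $\ee\le\eeb\le\kappa$. The gap is in the core step, and it is twofold. First, the condition $\dot q_m$ you describe cannot exist: $p_{\xi_m}$ forces that $\dot c_{\xi_m}(n)\le\dot\pi_n(\dot c_{\xi_m}\on n)$ for \emph{every} $n\in \dot D_{\dot\pi}$ past $n^*$, so no extension of $p_{\xi_m}$ can force an exceedance ``at a coordinate in the predictor's domain''. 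The only thing one can force, as in \cite{GMS16}, is a \emph{conditional} statement: after arranging a common committed history $t$ (so that $\dot\pi_\ell(t)$ is a single name --- without this, the values $\dot\pi_\ell(\dot c_{\xi_m}\on\ell)$ are unrelated names for different $m$, and ``infinitely many exceedances for infinitely many different reals'' is no contradiction at all, since bounding-prediction tolerates finitely many errors per real), each $q_m$ forces ``$\ell\in\dot D_{\dot\pi}\Rightarrow\dot\pi_\ell(t)\ge m$''. Second, even done this way, a \emph{single} application of \eqref{eq_increasing} yields no contradiction: the limit merely forces $\dot\pi_\ell(t)$ to be infinite \emph{if} $\ell\in\dot D_{\dot\pi}$, hence forces $\ell\notin\dot D_{\dot\pi}$. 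Since $\dot D_{\dot\pi}$ is only required to be infinite, forcing pre-chosen positions (even infinitely many of them) out of it is perfectly consistent; the predictor simply escapes by shrinking its domain.

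This is exactly why the actual proof in \cite{Yam24} is genuinely harder than Theorem \ref{thm_uf_limit_keeps_b_small}, and why your account of where closedness enters is off. The real role of closedness is not that the limit ``retains the promise that makes $\dot c_{\xi_m}$ evade'' (the promise is carried by the $p_{\xi_m}$, not by membership in the component); it is that $\ran(\lim^D)\subseteq Q$ makes the limit condition again follow the guardrail, so that limits can be \emph{iterated}: one builds an $\omega\times\omega$ array of conditions, takes limits along each row (each row-limit forcing one more position out of $\dot D_{\dot\pi}$), and then --- precisely because the row-limits still lie in the lim-linked components --- takes a limit of the row-limits. Arranged so that the row-limits are decreasing, this master condition forces a whole tail of positions out of $\dot D_{\dot\pi}$, i.e., forces $\dot D_{\dot\pi}$ to be finite, contradicting that $\dot\pi$ is a predictor. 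With plain uf-limits the row-limits fall out of the components and the second round cannot be launched; this iteration of limits is the missing idea in your sketch, and deferring it to \cite{Yam24} leaves the proof's actual content unproved.
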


We introduce a sufficient condition for ``$Q\subseteq\p$ is (c-)uf-lim-linked''. 
The condition is described in the context of pure combinatorial properties of posets and we do not have to think about forcings. 

\begin{dfn}
	\begin{itemize}
		\label{dfn_suff}
		\item Let $D$ be an ultrafilter on $\omega$, $Q\subseteq\p$ and  $\lim^D\colon Q^\omega\to\p$.
		For $n<\omega$, let $(\star)_n$ stand for:
		\begin{align*}
			(\star)_n:& \text{``Given }\bar{q}^j=\langle q_m^j:m<\omega\rangle\in Q^\omega\text{ for }j<n\text{ and }r\leq\textstyle{\lim^D}\bar{q}^j\text{ for all }j<n,\\
			&\text{then }\{m<\omega:r \text{ and all }q_m^j \text{ for } j<n \text{ have a common extension}\}\in D\text{''}.
		\end{align*}

		$Q\subseteq\p$ is sufficiently-$D$-lim-linked (suff-$D$-lim-linked, for short, $Q\in\Lambda^\mathrm{suff}_D(\p)$) if there exists a function $\lim^D\colon Q^\omega\to\p$ satisfying $(\star)_n$ for all $n<\omega$.
		
		Moreover, $Q\subseteq\p$ is suff-c-$D$-lim-linked ($Q\in\Lambda^\mathrm{suff}_{\mathrm{c}D}(\p)$) if $\ran(\lim^D)\subseteq Q$.
		\item $Q\subseteq\p$ is suff-(c-)uf-lim-linked if $Q$ is $Q\subseteq\p$ is suff-(c-)$D$-lim-linked for any $D$.
		\item $\Lambda^\mathrm{suff}\coloneq\bigcap_D\Lambda^\mathrm{suff}_D$ and 
		$\Lambda^\mathrm{suff}_\mathrm{c}\coloneq\bigcap_D\Lambda^\mathrm{suff}_{\mathrm{c}D}$.
	\end{itemize}
\end{dfn}


\begin{lem}
	\label{lem_suff_has_uf}
	Let $D$ be an ultrafilter on $\omega$ and $Q\subseteq\p$ is suff-$D$-lim-linked witnessed by $\lim^D$.
	Then, $Q$ is uf-lim-linked with the same witness $\lim^D$.
	In particular, $\suff\subseteq\Lambda^\mathrm{lim}_\mathrm{uf}$ and $\suff_\mathrm{c}\subseteq\Lambda^\mathrm{lim}_\mathrm{cuf}$.
\end{lem}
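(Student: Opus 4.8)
The plan is to keep the given function $\lim^D$ unchanged and, for each generic $G$, to manufacture a suitable ultrafilter $D'=\dot D'[G]$ extending $D$ so that the defining clause of Definition~\ref{dfn_UF_linked} holds. For $\bar q=\langle q_m:m<\omega\rangle\in Q^\omega$ write $A(\bar q)\coloneq\{m<\omega:q_m\in\dot G\}$, a $\p$-name, and consider the name
\[
\dot{\mathcal F}\coloneq D\cup\{A(\bar q):\bar q\in Q^\omega,\ \textstyle{\lim^D}\bar q\in\dot G\}.
\]
Once I show that $\p$ forces $\dot{\mathcal F}$ to have the strong finite intersection property, I let $\dot D'$ be a name (invoking AC in the extension) for an ultrafilter extending the proper filter generated by $\dot{\mathcal F}$; since it contains $D$, such a $D'$ is automatically non-principal and extends $D$, and it contains $A(\bar q)$ whenever $\lim^D\bar q\in G$. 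As ``$\lim^D\bar q\in G$'' is precisely the assertion that $\lim^D\bar q\in\dot G$, this gives $\lim^D\bar q\Vdash A(\bar q)\in\dot D'$, i.e.\ the required forcing inequality. Hence $\lim^D$ together with $\dot D'$ witnesses that $Q$ is $D$-lim-linked.

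Thus everything reduces to the SFIP statement, which I would obtain from the following combinatorial claim proved in $V$: for every $d\in D$, every $n<\omega$, every $\bar q^0,\dots,\bar q^{n-1}\in Q^\omega$, and every condition $p$ with $p\le\lim^D\bar q^j$ for all $j<n$, there are $s\le p$ and $m\in d$ with $s\le q^j_m$ for all $j<n$. Granting this, suppose toward a contradiction that some $p_0$ forces a finite subfamily of $\dot{\mathcal F}$ to have empty intersection. After deciding the finitely many sets involved, this subfamily consists of some $d\in D$ (a ground-model set, since $D\in V$, obtained by intersecting the finitely many $D$-members) together with finitely many $A(\bar q^0),\dots,A(\bar q^{n-1})$ with $p_0\Vdash\lim^D\bar q^j\in\dot G$. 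Using the standard fact that $p_0\Vdash \check c\in\dot G$ yields a strengthening of $p_0$ below $c$, I refine $p_0$ finitely often to a condition $p\le p_0$ with $p\le\lim^D\bar q^j$ for all $j<n$. The claim then supplies $s\le p$ and $m\in d$ with $s\le q^j_m$; since $s\le q^j_m$ forces $q^j_m\in\dot G$, we get $s\Vdash m\in d\cap\bigcap_{j<n}A(\bar q^j)$, contradicting that $p_0$, and hence $s\le p_0$, forces this intersection empty.

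The combinatorial claim is exactly where $(\star)_n$ enters, and there it is immediate: taking $r\coloneq p$ (which satisfies $r\le\lim^D\bar q^j$ for all $j<n$), $(\star)_n$ yields that $B\coloneq\{m<\omega: p\text{ and all }q^j_m\ (j<n)\text{ have a common extension}\}\in D$; since $d\in D$ as well, $d\cap B\in D$ is non-empty, and choosing $m\in d\cap B$ together with a common extension $s$ of $p$ and the $q^j_m$ finishes it. I expect this to be the only real content: once the order-versus-$\dot G$ bookkeeping of the previous paragraph is arranged correctly, the SFIP falls out of a single application of $(\star)_n$. For the closed variant the same argument applies verbatim, and the extra hypothesis $\ran(\lim^D)\subseteq Q$ is simply carried along, so the same witness shows $Q\in\Lambda^\mathrm{lim}_{\mathrm{c}D}$; finally, intersecting over all ultrafilters $D$ gives $\suff\subseteq\Lambda^\mathrm{lim}_\mathrm{uf}$ and $\suff_\mathrm{c}\subseteq\Lambda^\mathrm{lim}_\mathrm{cuf}$.
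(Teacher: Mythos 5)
Your proof is correct and is essentially the standard argument that this paper omits and defers to \cite{Yam24}: force that $D\cup\{A(\bar q):\bar q\in Q^\omega,\ \textstyle{\lim^D}\bar q\in\dot G\}$ has the SFIP---which, after strengthening a purported counterexample condition below the finitely many limits $\lim^D\bar q^j$, is exactly one application of $(\star)_n$---and then extend it to a name $\dot D'$ for an ultrafilter. One remark: what this argument yields (and all the lemma is ever used for) is that $Q$ is $D$-lim-linked for the \emph{same} ultrafilter $D$, i.e.\ $\Lambda^\mathrm{suff}_D\subseteq\Lambda^\mathrm{lim}_D$ for each $D$; the same witness $\lim^D$ cannot in general work for a different ultrafilter $E$, so your closing step of intersecting over all $D$ to obtain $\suff\subseteq\Lambda^\mathrm{lim}_\mathrm{uf}$ and $\suff_\mathrm{c}\subseteq\Lambda^\mathrm{lim}_\mathrm{cuf}$ is precisely the correct reading of the lemma's ``uf-lim-linked with the same witness'' phrasing.
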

%

\begin{lem}(\cite{GMS16})\label{lem_Ed_is_cuf}
	Eventually different forcing $\mathbb{E}$ is
	
	$\sigma$-$\left(\Lambda(\mathrm{centered})\cap\Lambda^\mathrm{suff}_\mathrm{c}\right)$-linked and hence $\sigma$-$\left(\Lambda(\mathrm{centered})\cap\Lambda^\mathrm{lim}_\mathrm{cuf}\right)$-linked.
\end{lem}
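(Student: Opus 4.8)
The plan is to decompose $\mathbb{E}$ by stems, refined by a cardinality bound, and to exhibit for each ultrafilter $D$ an explicit \emph{closed} limit function on each piece. Recall the standard definition: a condition of $\mathbb{E}$ is a pair $(s,F)$ with $s\in\seq$ and $F\in[\oo]^{<\omega}$, ordered by $(t,G)\leq(s,F)$ iff $t\supseteq s$, $G\supseteq F$ and $t(k)\neq f(k)$ for all $k\in[|s|,|t|)$ and $f\in F$. For $(s,N)\in\seq\times\omega$ I would set $Q_{s,N}\coloneq\{(s,F):F\in[\oo]^{<\omega},\,|F|\leq N\}$; since every condition lies in some $Q_{s,N}$ and $\seq\times\omega$ is countable, this is a countable cover. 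Each $Q_{s,N}$ is centered, because finitely many conditions $(s,F_0),\dots,(s,F_{l})$ sharing the stem $s$ admit the common extension $(s,\bigcup_{i\leq l}F_i)\in\mathbb{E}$. By Lemma \ref{lem_suff_has_uf} it then suffices to show that each $Q_{s,N}$ is suff-c-$D$-lim-linked for every ultrafilter $D$, since this yields $Q_{s,N}\in\Lambda^\mathrm{suff}_\mathrm{c}\subseteq\Lambda^\mathrm{lim}_\mathrm{cuf}$ and delivers both assertions simultaneously.

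Fixing $D$ and a piece $Q_{s,N}$, I would define $\lim^D$ as follows. Given $\bar q=\langle q_m:m<\omega\rangle\in Q_{s,N}^\omega$ with $q_m=(s,F_m)$, write $F_m(k)\coloneq\{f(k):f\in F_m\}$ and set $V_k\coloneq\{v<\omega:\{m<\omega:v\in F_m(k)\}\in D\}$, the set of values appearing at coordinate $k$ for $D$-many $m$. The crucial point is that $|V_k|\leq N$: if $v_0,\dots,v_N$ were distinct members of $V_k$, then $\bigcap_{i\leq N}\{m:v_i\in F_m(k)\}\in D$ would be nonempty, producing an $m$ with $|F_m(k)|\geq N+1$ and hence $|F_m|\geq N+1$, contradicting $q_m\in Q_{s,N}$. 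Consequently I can pack the $V_k$ into $N$ functions: choose $g_0,\dots,g_{N-1}\in\oo$ with $\{g_i(k):i<N\}\supseteq V_k$ for every $k$, and put $\lim^D\bar q\coloneq(s,\{g_0,\dots,g_{N-1}\})\in Q_{s,N}$. This keeps the range of $\lim^D$ inside $Q_{s,N}$, which is exactly what the closed version demands.

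It then remains to verify $(\star)_n$. Suppose $\bar q^j=\langle q^j_m:m<\omega\rangle\in Q_{s,N}^\omega$ for $j<n$ with $q^j_m=(s,F^j_m)$, let $F^{*,j}$ and $V^j_k$ be the associated data, and let $r=(t,G)\leq\lim^D\bar q^j=(s,F^{*,j})$ for all $j<n$. From $r\leq(s,F^{*,j})$ one gets $t(k)\neq g^j_i(k)$ for all $k\in[|s|,|t|)$ and $i<N$, hence $t(k)\notin V^j_k$, that is $\{m<\omega:t(k)\notin F^j_m(k)\}\in D$ for each such $k$ and each $j<n$. Intersecting these finitely many $D$-sets over $j<n$ and $k\in[|s|,|t|)$ shows $A\coloneq\{m<\omega:\forall j<n~\forall k\in[|s|,|t|)~t(k)\notin F^j_m(k)\}\in D$, and for $m\in A$ the pair $(t,G\cup\bigcup_{j<n}F^j_m)$ is a common extension of $r$ and of all $q^j_m$, which is precisely $(\star)_n$. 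The main obstacle is exactly the finiteness $|V_k|\leq N$: over the whole forcing the sets $F_m$ have unbounded size and $V_k$ may be infinite, so a stem-only decomposition cannot support a \emph{closed} limit; refining by the bound $N$ is what forces $V_k$ to be coverable by $N$ functions and keeps $\lim^D\bar q$ inside the piece.
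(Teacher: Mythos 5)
Your proposal is correct and takes essentially the same route as the argument this lemma rests on: the paper omits the proof (deferring to \cite{GMS16} and \cite{Yam24}), and there the linked pieces are exactly the stem-plus-bound components $Q_{s,N}$, the limit is the condition whose finitely many functions cover the $D$-frequent values $V_k$ (with the same $|V_k|\leq N$ counting argument), and $(\star)_n$ is verified just as you do; your construction also runs parallel to the paper's own proof of Lemma \ref{lem_E_cuf} for $\widetilde{\mathbb{E}}$, where $D$-frequent membership likewise defines a closed limit. No gaps.
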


\begin{dfn}
	
	For $g\in\left(\omega+1\setminus2\right)^\omega$,
	let $g$-prediction forcing $\pr_g$ the poset which generically adds a $g$-predictor and hence increase $\ee_g$, defined as follows:
	
	$\pr_g$ consists of tuples $(d,\pi,F)$ satisfying:
	\begin{enumerate}
		\item $d\in\sq$.
		\item $\pi=\langle\pi_n:n\in d^{-1}(\{1\})\rangle$.
		\item for each $n\in d^{-1}(\{1\})$, $\pi_n$ is a finite partial function of $\prod_{k<n}g(k)\to g(n)$.
		\item $F\in[\prod_{n<\omega}g(n)]^{<\omega}$
		\item for each $f,f^\prime\in F, f\on|d|=f^\prime\on|d|$ implies $f=f^\prime$.
	\end{enumerate}
	$(d^\prime,\pi^\prime,F^\prime)\leq(d,\pi,F)$ if:
	\begin{enumerate}[(i)]

		\item $d^\prime\supseteq d$.
		\item $\forall n\in d^{-1}(\{1\}), \pi_n^\prime\supseteq\pi_n$.
		\item $F^\prime\supseteq F$.
		\item \label{item_PR_order_long}
		$\forall n\in(d^\prime)^{-1}(\{1\})\setminus d^{-1}(\{1\}),\forall f\in F, f\on n\in\dom(\pi^\prime_n)$ and $\pi^\prime_n(f\on n)=f(n)$.
		
	\end{enumerate}
	When $g(n)=\omega$ for all $n<\omega$, we write $\pr$ instead of $\pr_g$ and just call it ``prediction forcing''.
\end{dfn}

\begin{lem}
	(\cite{BS_E_and_P_2})\label{lem_PR_uf}
	For $g\in\left(\omega+1\setminus2\right)^\omega$, $\pr_g$ is $\sigma$-$\left(\Lambda(\mathrm{centered})\cap\Lambda^\mathrm{suff}\right)$-linked and hence $\sigma$-$\left(\Lambda(\mathrm{centered})\cap\Lambda_\mathrm{uf}^\mathrm{lim}\right)$-linked. Moreover, $\pr_g$ is $\sigma$-$\left(\Lambda(\mathrm{centered})\cap\Lambda_\mathrm{c}^\mathrm{suff}\right)$-linked and hence
	$\sigma$-$\left(\Lambda(\mathrm{centered})\cap\Lambda_\mathrm{cuf}^\mathrm{lim}\right)$-linked whenever $g\in(\omega\setminus2)^\omega$.
\end{lem}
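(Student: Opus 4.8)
The plan is to write $\pr_g$ as a countable union of pieces lying in $\Lambda(\mathrm{centered})\cap\Lambda^\mathrm{suff}$ (respectively $\Lambda(\mathrm{centered})\cap\Lambda^\mathrm{suff}_\mathrm{c}$ when $g\in(\omega\setminus2)^\omega$), and then to quote Lemma~\ref{lem_suff_has_uf} for the two ``hence'' clauses. The decomposition is by the finite core of a condition: for $d\in\sq$, a tuple $\pi=\langle\pi_n:n\in d^{-1}(\{1\})\rangle$ of finite partial functions $\prod_{k<n}g(k)\to g(n)$, and $k<\omega$, let $Q_{d,\pi,k}$ collect all $(d,\pi,F)\in\pr_g$ with $|F|=k$. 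As $d$ ranges over the countable set $\sq$ and each $\pi_n$ is a finite partial function between countable sets, there are only countably many such pieces. Centeredness is direct: given finitely many $(d,\pi,F_0),\dots,(d,\pi,F_{l-1})$ in one piece, the tuple $(d',\pi,\bigcup_iF_i)$ is a common extension, where $d'$ pads $d$ with $0$'s past the length at which all distinct functions in $\bigcup_iF_i$ have diverged; this adds no prediction point, so clause~\eqref{item_PR_order_long} is vacuous and condition~5 holds by the choice of $|d'|$.

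For the limit, fix $D$ and $\bar q=\langle(d,\pi,F_m):m<\omega\rangle$. Inside each $F_m$ condition~5 lets me list the functions $f^m_0,\dots,f^m_{k-1}$ in increasing order of their traces $f^m_i\on|d|$, and I would define $\lim^D\bar q$ coordinatewise by the $D$-limits of $\langle f^m_i(n):m<\omega\rangle$. When $g\in(\omega\setminus2)^\omega$ every such $D$-limit exists by pigeonhole; the limiting functions have pairwise distinct traces (two distinct traces cannot converge to a single value along $D$ without violating condition~5), so $\lim^D\bar q\in Q_{d,\pi,k}$ and the witness is \emph{closed}. When some $g(n)=\omega$ a coordinatewise $D$-limit can fail to exist; at such points I would pad $d$ with $0$'s up to and past the \emph{least} escaping coordinate, keep the genuine $D$-limits below it, and assign default values above. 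Padding past the first escaping coordinate guarantees that for every $r\le\lim^D\bar q$ each \emph{new} prediction point of $r$ lies beyond an escaping coordinate, so there the arguments $f\on n$ $D$-escape every finite set, in particular $\dom((\pi_r)_n)$.

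The heart is $(\star)_n$. Given $\bar q^0,\dots,\bar q^{n-1}\in Q_{d,\pi,k}^\omega$ and $r\le\lim^D\bar q^j$ for every $j<n$, clause~\eqref{item_PR_order_long} applied to each inequality pins the finite predictor $\pi_r$, at each new prediction point, to the recorded $D$-limits; since $\pi_r$ is a function this simultaneously forces agreement \emph{across} the indices $j$ wherever the limit functions coincide. In the closed case I would then choose, using that a finite intersection of $D$-sets is in $D$, the $D$-many $m$ for which every $f^j_{m,i}$ equals its $D$-limit up to $|d_r|$; for such $m$ the predictor $\pi_r$ already predicts these functions correctly, and a common extension $(e,\rho,G)$ is assembled with $e$ the zero-padding of $d_r$, $G=\bigcup_jF^j_m$, and $\rho\supseteq\pi_r$ defined on the remaining arguments by the (now consistent) values. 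Lemma~\ref{lem_suff_has_uf} then upgrades suff-$D$-lim-linkedness for all $D$ to uf-lim-linkedness, and its closed form to cuf-lim-linkedness, giving both assertions.

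I expect the genuine obstacle to be the coordinates with $g(n)=\omega$. There the chosen $m$ agree with the $D$-limits only \emph{below} the first escaping coordinate, so at a new prediction point inside the escaped region the predictor $\rho$ must be forced to commit at once to the finitely many (genuinely different) functions in $\bigcup_jF^j_m$, and one must rule out --- for $D$-many $m$ --- the situation where two of these agree just below that point but differ at it, which would make $\rho$ ill-defined. Controlling this accidental agreement via $D$ is the delicate combinatorial step, and is exactly why the closed (hence cuf) conclusion is claimed only for $g\in(\omega\setminus2)^\omega$; this is where I expect the real work of \cite{BS_E_and_P_2} to be concentrated.
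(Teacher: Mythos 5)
Your treatment of the bounded case $g\in(\omega\setminus2)^\omega$ (which is what yields the ``Moreover'' clause) is essentially the standard argument and is correct: when every $g(n)$ is finite, the coordinatewise $D$-limits exist by pigeonhole, condition (5) passes to the limit exactly as you argue (if two limit traces on $|d|$ coincided, then intersecting the finitely many $D$-large agreement sets would produce an $m$ whose two functions have equal traces, contradicting (5)), so $\lim^D\bar q\in Q_{d,\pi,k}$, and $(\star)_n$ follows by restricting to the $D$-large set of $m$ on which all the finitely many functions agree with their limits up to $|d_r|$. (Small slip: the common extension must also contain $F_r$, so take $G=F_r\cup\bigcup_jF^j_m$.) Note the paper itself gives no proof of this lemma --- it cites \cite{BS_E_and_P_2} and defers details to \cite{Yam24} --- so there is no ``paper proof'' to compare against; your bounded-case argument matches the standard one.

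The unbounded case ($g(n)=\omega$ allowed, i.e.\ the \emph{first} claim of the lemma, which includes $\pr$ itself) has a genuine gap, and it is exactly the step you flag at the end --- except that for your construction the problem is fatal, not merely delicate. Your limit (genuine $D$-limits below the least escaping coordinate, zero-padding of $d$ past it, default values above) provably fails $(\star)_2$: work in $Q_{\langle\rangle,\langle\rangle,1}$ and take $f_m\equiv m$ (the constant function with value $m$) and $g_m(c)=m$ for $c\neq5$, $g_m(5)=m+1$. Both sequences have least escaping coordinate $0$, so your construction assigns both the \emph{same} limit, say $\bigl(\langle0\rangle,\langle\rangle,\{\mathbf{0}\}\bigr)$ with default function $\mathbf{0}$. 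Then $r=\bigl(\langle0,0,0,0,0,1\rangle,\langle\rho_5\rangle,\{\mathbf{0}\}\bigr)$ with $\rho_5((0,0,0,0,0))=0$ extends both limits; but for every $m$, a common extension of $r$, $q^0_m$, $q^1_m$ would need its predictor at the new prediction point $5$ to send the common trace $(m,m,m,m,m)$ simultaneously to $f_m(5)=m$ and to $g_m(5)=m+1$, so the set of good $m$ is empty and $(\star)_2$ fails. What your padding argument controls is only conflicts between $r$'s finite predictor domains and a \emph{single} sequence (there escape of traces does help); it cannot control collisions between \emph{two different} sequences whose traces escape in lockstep but whose values split at a coordinate above the least escape point, and neither finite padding nor any canonical choice of default values can block all such coordinates (the collision coordinate can be pushed beyond any padding the construction assigns, since the padding must be read off from each sequence separately). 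Handling precisely this cross-sequence phenomenon is the real content of the unbounded case and requires a genuinely different construction of the limits; contrary to your closing remark, it is not special to the closed/cuf clause --- it already obstructs plain suff-linkedness of $\pr$.
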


	\subsection{FAM-limit}
	\newcommand{\ind}{i}
\newcommand{\pst}{\mathbb{P}}
We describe the general theory of FAM-limits following \cite{Uri}.
Also, the original ideas are already in \cite{She00} and \cite{KST}. Thus, we omit proofs in this subsection (See \cite{Uri} for details). 
\begin{dfn}
	\begin{itemize}
		\item For a set $A$ and a finite non-empty set $B$,
		\[\dns_B(A)\coloneq\frac{|A\cap B|}{|B|}.\]
		\item For a poset $\p$ and a countable sequence $\bar{p}=\langle p_l:l<\omega\rangle\in\p^\omega$ of conditions, let $\Vdash_\p\dot{W}(\bar{p})\coloneq\{l<\omega:p_l\in\dot{G}\}$.
		\item A fam on $\omega$ is a finitely additive measure $\Xi\colon\mathcal{P}(\omega)\to[0,1]$ such that $\Xi(\omega)=1$ and $\Xi(\{n\})=0$ for all $n<\omega$.
		\item $\mathbb{I}_\infty$ is defined by $\langle I_k\rangle_{k<\omega}\in\mathbb{I}_\infty:\Leftrightarrow\langle I_k\rangle_{k<\omega}$ is an interval partition of $\omega$ such that all $I_k$ are non-empty and $|I_k|$ goes to infinity.
		\item $(0,1)_\mathbb{Q}$ denotes the set of all rational numbers $q$ with $0<q<1$.
	\end{itemize}
\end{dfn}

\begin{dfn}
	Let $\Xi$ be a fam, $\bari\in\mathbb{I}_\infty$, $0<\varepsilon<1$ and $\p$ be a poset.
	A set $Q\subseteq\p$ is $(\Xi,\bar{I},\varepsilon)$-linked if there is some map $\lim^\Xi:Q^\omega\to \p$ and some $\p$-name $\dot{\Xi}^\prime$ of a fam extending $\Xi$ such that for any $\bar{q}\in Q^\omega$,
	\begin{equation*}
		\textstyle{\lim^\Xi}\displaystyle{\bar{q}\Vdash\int_\omega \dns_{I_k}(\dot{W}(\bar{q}))~d\dot{\Xi}^\prime\geq1-\varepsilon}.
	\end{equation*} 
\end{dfn}

\begin{dfn}
	Let $\p$ be a poset, $n<\omega$ and $\bar{q}\in\p^n$.
	We define \[i^\p_*(\bar{q})\coloneq\max\{|F|:F\subseteq n, \{q_i:i\in F\}\text{ has a common lower bound}\}.\]
	For $Q\subseteq\p$, define
	\[\intt(Q)\coloneq\inf_{0<n<\omega}\left\{\frac{i^\p_*(\bar{q})}{n}:\bar{q}\in Q^n\right\}.\]
\end{dfn}

\begin{dfn}
	A poset $\p$ is $\mu$-FAM-linked\footnote{This linkedness notion is not in the framework of Definition \ref{dfn_linked_class} in that we use an additional parameter $\varepsilon\in(0,1)_\mathbb{Q}$, but the formalization is almost the same as ultrafilter-limits.} if there is a sequence $\langle Q_{\a,\varepsilon} :\a<\mu, \varepsilon\in(0,1)_\mathbb{Q}\rangle$ of subsets of $\p$ such that\footnote{Our definition is a bit different from that in \cite{Uri}: In Definition 4.2.8 in \cite{Uri} he required a stronger property for $\mu$-FAM-linkedness, especially for $(\Xi,\bar{I},\varepsilon)$-linkedness in Definition 4.2.2, and dealt with our definition as a derived property in Theorem 4.2.5.}:
	\begin{itemize}
		\item Each $Q_{\a,\varepsilon}$ is $(\Xi,\bar{I},\varepsilon)$-linked for any fam $\Xi$ and $\bar{I}\in \mathbb{I}_\infty$.
		\item For any $\varepsilon\in(0,1)_\mathbb{Q}$, $\bigcup_{\a<\mu} Q_{\a,\varepsilon}$ is dense in $\p$.
		\item For any $\a<\mu$ and $\varepsilon\in(0,1)_\mathbb{Q}$, $\intt(Q_{\a,\varepsilon})\geq1-\varepsilon$.
	\end{itemize}
\end{dfn}

\begin{dfn}
	\label{dfn_FAM_iteration}
	\begin{itemize}
		\item A $\kappa$-FAM-iteration is a fsi =$\langle\p_\xi, \qd_\xi : \xi<\gamma\rangle$ with witnesses $\langle\p^-_\xi:\xi<\gamma\rangle$ and $\bar{Q}=\langle\dot{Q}^\xi_{\zeta,\varepsilon} : \xi<\gamma,\zeta<\theta_\xi, \varepsilon\in(0,1)_\mathbb{Q}\rangle$ satisfying for all $\xi<\gamma$:
		\begin{enumerate}
			\item $\p^-_\xi\lessdot\p_\xi$.
			\item $\theta_\xi<\kappa$.
			\item $\p^-_\xi$ forces that $\qd_\xi $ is $\theta_\xi$-FAM-linked witnessed by
			
			$\bar{Q}^\xi\coloneq\langle\dot{Q}^\xi_{\zeta,\varepsilon} : \zeta<\theta_\xi, \varepsilon\in(0,1)_\mathbb{Q}\rangle$.
		\end{enumerate}
		\item  $\xi<\gamma$ is a trivial stage if $\Vdash_{\p^-_\xi}|\dot{Q}^\xi_{\zeta,\varepsilon}|=1$ for all $\zeta<\theta_\xi$ and $\varepsilon\in(0,1)_\mathbb{Q}$. $S^-$ is the set of all trivial stages and $S^+\coloneq\gamma\setminus S^-$.
		\item A guardrail is a function $h\in \prod_{\xi<\gamma}(\theta_\xi\times(0,1)_\mathbb{Q})$.
		For a guardrail $h$, we write $h(\xi)=(h_L(\xi),h_R(\xi))$.
		\item $\p^h_\eta$ is the set of conditions $p\in\p_\eta$ following $h$, i.e., for $\xi\in\dom(p)$,
		$p(\xi)$ is a $\p^-_\xi$-name and $\Vdash_{\p^-_\xi} p(\xi)\in \dot{Q}^\xi_{h(\xi)}$.
		\item $\bar{q}=\langle \dot{q}_m:m<\omega\rangle$  sequentially follows $h$ with constant $\varepsilon\in(0,1)_\mathbb{Q}$ if there are $\langle\xi_m:m<\omega\rangle\in\gamma^\omega$ such that both $h_R(\xi_m)=\varepsilon$ and 
		$\Vdash_{\p^-_{\xi_m}}\dot{q}_m\in \dot{Q}^{\xi_m}_{h(\xi_m)}$ hold for all $m<\omega$.

		\item A set of guardrail $H$ is complete if for any guardrail $h$ and any countable partial function $h_0\subseteq h$, there exists $h^\prime\in H$ extending $h_0$.
	\end{itemize}
	
\end{dfn}

\begin{dfn}
	A $\kappa$-FAM-iteration has FAM-limits on $H$ with witness $\langle\Xi_\xi^{h,\bar{I}}:h\in H, \bar{I}\in\mathbb{I}_\infty,\xi\leq\gamma\rangle$ if
	\begin{enumerate}
		\item $H\subseteq\prod_{\xi<\gamma}(\theta_\xi\times(0,1)_\mathbb{Q})$, a set of guardrails.
		\item For $h\in H$, $\bar{I}\in\mathbb{I}_\infty$ and $\xi\leq\gamma$, $\dot{\Xi}_\xi^{h,\bar{I}}$ is a $\p_\xi$-name of a fam.
		\item If $\xi<\eta\leq\gamma$, then $\Vdash_{\p_\eta}\dot{\Xi}_\xi^{h,\bar{I}}\subseteq\dot{\Xi}_\eta^{h,\bar{I}}$.
		\item $\Vdash_{\p_\xi}(\dot{\Xi}_\xi^{h,\bar{I}})^-\in V^{\p^-_\xi}$, where $(\dot{\Xi}_\xi^{h,\bar{I}})^-\coloneq\dot{\Xi}_\xi^{h,\bar{I}}\cap V^{\p^-_\xi}$ if $\xi\in S^+$, otherwise let $(\dot{\Xi}_\xi^{h,\bar{I}})^-$ be an arbitrary fam in $V^{\p^-_\xi}$ (hence this item is trivially satisfied in this case).
		\item Whenever $\bar{q}=\langle \dot{q}_m:m<\omega\rangle$ sequentially follows $h$ with constant $\varepsilon\in(0,1)_\mathbb{Q}$ with witnesses $\langle\xi_m:m<\omega\rangle\in\gamma^\omega$, 
		\begin{enumerate}
			\item If $\langle\xi_m:m<\omega\rangle$ is constant with value $\xi$, then
			\begin{equation}
				\Vdash_{\p_\xi}\textstyle{\lim^{\dot{\Xi}_\xi^{h,\bar{I}}}\bar{q}}~\displaystyle{\Vdash_{\qd_\xi}\int\dns_{I_k}(\dot{W}(\bar{q}))d\dot{\Xi}_{\xi+1}^{h,\bar{I}}\geq1-\varepsilon}.
			\end{equation}
			\item If $\langle\xi_m:m<\omega\rangle$ is increasing, then for any $\varepsilon^\prime>0$,
			\begin{equation}
				\Vdash_{\p_\gamma}\dot{\Xi}_\gamma^{h,\bar{I}}\left(\{k<\omega:\dns_{I_k}(W(\bar{q}))\geq(1-\varepsilon)\cdot(1-\varepsilon^\prime)\}\right)=1.
			\end{equation}
		\end{enumerate}
		
	\end{enumerate}
\end{dfn}

\begin{lem}(\cite[Theorem 4.3.16, 4.3.18.]{Uri})
	\label{lem_fam_const_all}
	Let $\gamma$ be an ordinal and 
	$\p_\gamma$ be a $\kappa$-FAM-iteration.
	If $\langle \dot{\Xi}^{h,\bar{I}}_\xi:\xi<\gamma,~h\in H, \bar{I}\in\mathbb{I}_\infty\rangle$ witnesses that for any $\xi<\gamma$, $\p_\xi=\p_\gamma\on\xi$ has FAM-limits on $H$,  
	then we can find $\langle \dot{\Xi}^{h,\bar{I}}_\gamma:h\in H, \bar{I}\in\mathbb{I}_\infty\rangle$ such that $\langle \dot{\Xi}^{h,\bar{I}}_\xi:\xi\leq\gamma,~h\in H, \bar{I}\in\mathbb{I}_\infty\rangle$  witnesses $\p_\gamma$ has FAM-limits on $H$.
\end{lem}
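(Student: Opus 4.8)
The plan is to follow the structure of the proof of Lemma \ref{lem_uf_const_all}, replacing the strong finite intersection property by a weak*-compactness argument for finitely additive measures. Fix $h\in H$ and $\bar{I}=\langle I_k\rangle_{k<\omega}\in\mathbb{I}_\infty$; since the measures $\dot{\Xi}^{h,\bar{I}}_\gamma$ for distinct pairs $(h,\bar{I})$ are built independently, it suffices to treat one pair. Working in $V^{\p_\gamma}$, we must produce a fam $\Xi_\gamma$ on $\omega$ extending $\Xi_0\coloneq\bigcup_{\xi<\gamma}\Xi^{h,\bar{I}}_\xi$ (coherent by clause (3) of the induction hypothesis, hence a fam on the subalgebra $\mathcal{B}_0\coloneq\bigcup_{\xi<\gamma}(\mathcal{P}(\omega)\cap V^{\p_\xi})$) and satisfying clause (5)(b); clause (5)(a) refers only to stages below $\gamma$ and clause (4) is required only for $\xi<\gamma$, so neither concerns the limit step. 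The constraints of (5)(b) split by the witness $\langle\xi_m\rangle$: if $\langle\xi_m\rangle$ is bounded, with $\xi^*\coloneq\sup_m\xi_m<\gamma$, then $W(\bar{q})$ and the set $S_{\bar{q},\varepsilon'}\coloneq\{k:\dns_{I_k}(W(\bar{q}))\geq(1-\varepsilon)(1-\varepsilon')\}$ already lie in $V^{\p_{\xi^*+1}}$, and the induction hypothesis (clause (5)(b) for $\p_{\xi^*+1}$) forces $\Xi_{\xi^*+1}(S_{\bar{q},\varepsilon'})=1$, a value inherited by any fam extending $\Xi_0$. Consequently, if $\cf(\gamma)>\omega$ then no witness is unbounded and we may take $\Xi_\gamma$ to be any fam extension of $\Xi_0$, so we assume $\cf(\gamma)=\omega$ and take the new constraints to be indexed by the $\bar{q}$ that sequentially follow $h$ with some constant $\varepsilon$ along an increasing witness $\langle\xi_m\rangle\to\gamma$, together with $\varepsilon'\in(0,1)_\mathbb{Q}$.

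The set of fams extending $\Xi_0$ is a non-empty weak*-compact convex subset of $(\ell^\infty)^*$, and each requirement ``$\Xi(S_{\bar{q},\varepsilon'})=1$'' cuts out a weak*-closed subset (the functional $\Xi\mapsto\Xi(S_{\bar{q},\varepsilon'})$ is weak*-continuous). So by the finite intersection property in a compact space it suffices to prove finite consistency: for every finite family $\{(\bar{q}^i,\varepsilon'_i):i<n\}$ there is a fam extending $\Xi_0$ giving measure $1$ to $B\coloneq\bigcap_{i<n}S_{\bar{q}^i,\varepsilon'_i}$. By the standard outer-measure criterion for concentrating a finitely additive measure, such an extension exists iff $B$ has full $\Xi_0$-outer measure, i.e.\ iff every $D\in\mathcal{B}_0$ with $\Xi_0(D)>0$ meets $B$. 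This last statement is the exact counterpart of the SFIP clause in Lemma \ref{lem_uf_const_all}, and establishing it is the main obstacle.

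To prove it I would argue by density over $\p_\gamma$, mirroring the construction of $q'$ in Lemma \ref{lem_uf_const_all}. Supposing $p\Vdash\dot{D}\cap\dot{B}=\emptyset$ with $\dot{D}$ a $\p_\xi$-name ($\xi<\gamma$) forced to have positive $\Xi_\xi$-measure, I would find $q\leq p$ and $k\in\dot{D}$, with $k$ chosen large enough that all coordinates $\{\xi^i_l:l\in I_k,\,i<n\}$ exceed $\xi$ and $\supp(p)$, such that $q$ forces a $(1-\varepsilon_i)(1-\varepsilon'_i)$-fraction of $\{q^i_l:l\in I_k\}$ into $\dot{G}_\gamma$ for each $i<n$, contradicting $p\Vdash k\notin\dot{B}$. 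Conditions at distinct coordinates are combined freely in the finite support, while conditions colliding at a common coordinate $\eta$ all lie in the single FAM-linked piece $\dot{Q}^\eta_{h(\eta)}$, whose $\intt\geq1-h_R(\eta)$ yields a common lower bound for a $(1-h_R(\eta))$-fraction of them; the factor $(1-\varepsilon'_i)$ is the budget for the residual loss.

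The genuinely delicate point—and the reason the FAM-limit apparatus is heavier than the ultrafilter one—is precisely this quantitative bookkeeping: one must show that the accumulated per-coordinate $\varepsilon$-losses across the whole interval $I_k$ stay below the target density $(1-\varepsilon)(1-\varepsilon')$ for $\Xi_\gamma$-almost every $k$, which requires a concentration (law-of-large-numbers) estimate as $|I_k|\to\infty$ drawing on the density of the linked pieces and the successor clause (5)(a) at the stages below $\gamma$. This is the content of the cited theorems of Uribe-Zapata, and is where I expect the real work to lie; unlike the ultrafilter case, where a single common lower bound at each coordinate settles a single index, here the target is a density over a growing interval and the losses must be aggregated carefully. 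Once this is in hand, verifying that $\Xi_\gamma$ is a genuine $\p_\gamma$-name of a fam extending each $\dot{\Xi}^{h,\bar{I}}_\xi$ (clause (3)) and realizing (5)(b) is routine.
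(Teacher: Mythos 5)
Your outer frame is the right one (and, to be clear, the paper itself gives no proof of this lemma --- it cites \cite[Theorems 4.3.16, 4.3.18]{Uri} --- so the comparison is with that argument): fixing $(h,\bar{I})$, disposing of bounded witnesses and of $\cf(\gamma)>\omega$ by the induction hypothesis, and producing $\dot{\Xi}_\gamma$ from weak*-compactness plus the outer-measure extension criterion is exactly how the limit step is organized. The genuine gap is the step you explicitly defer, and the mechanism you sketch for it would not work. At a collision coordinate $\eta$, the property $\intt(\dot{Q}^\eta_{h(\eta)})\geq 1-h_R(\eta)$ only says that \emph{some} $(1-h_R(\eta))$-proportion of the colliding conditions admits a common lower bound; you have no control over which, and nothing prevents the excluded condition from belonging to the same $\bar{q}^{i}$ at every coordinate. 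Already for $n=2$ and $\varepsilon=1/2$ an $\intt\geq 1/2$ piece may contain incompatible pairs, so coordinate-wise appeals to $\intt$ are compatible with one $\bar{q}^{i}$ being starved at every coordinate, i.e.\ with its forced density being $0$ instead of $\geq(1-\varepsilon)(1-\varepsilon')$; hence ``accumulated per-coordinate losses'' cannot be bounded per $i$ this way. The cited proof replaces selection by measure: Kelley's theorem turns $\intt(Q)\geq1-\varepsilon$ into a finitely additive measure on the completion of the iterand giving every element of $Q$ measure $\geq1-\varepsilon$; these measures are integrated Fubini-style across the (finitely many, pairwise distinct) coordinates hit by $I_k$, yielding a fam under which each event $[\dot{q}^{i}_l\in\dot{G}]$ has conditional measure $\geq1-\varepsilon_i$ given the earlier coordinates; a Chebyshev/Azuma-type concentration estimate --- this is the only place where $|I_k|\to\infty$ enters --- makes each event $[\dns_{I_k}(\dot{W}(\bar{q}^{i}))\geq(1-\varepsilon_i)(1-\varepsilon'_i)]$ have measure $>1-\tfrac{1}{2n}$ for large $k$, and a union bound over $i<n$ together with the positivity of $\Xi_\xi(\dot{D})$ produces a nonzero Boolean value below $p$, i.e.\ the desired $q$ and $k$. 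So what is missing is not bookkeeping but a different idea (intersection numbers $\to$ fams on the completion $\to$ a product-type fam $\to$ a law of large numbers), which is the actual content of the theorems you cite.

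There is also a scope gap: your trichotomy (bounded witnesses, $\cf(\gamma)>\omega$, $\cf(\gamma)=\omega$) silently assumes $\gamma$ is a limit ordinal, whereas the lemma says ``let $\gamma$ be an ordinal'' and the paper applies it at \emph{successor} steps of Construction \ref{con_P7} (``we can also extend the fams $\bar{\Xi}$''). For $\gamma=\xi+1$ your assertion that clause (5)(a) concerns only stages below $\gamma$ is false: with constant witness value $\xi$ it is a constraint on the new measure, namely that below the condition $\lim^{\dot{\Xi}^{h,\bar I}_{\xi}}\bar{q}$ one forces $\int\dns_{I_k}(\dot{W}(\bar{q}))\,d\dot{\Xi}^{h,\bar I}_{\gamma}\geq1-\varepsilon$. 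Meeting it is precisely where the $(\Xi,\bar{I},\varepsilon)$-linkedness apparatus (the map $\lim^\Xi$ and the name $\dot{\Xi}'$ extending $(\dot{\Xi}^{h,\bar I}_\xi)^-$) and clause (4) (which puts $(\dot{\Xi}^{h,\bar I}_\xi)^-$ in $V^{\p^-_\xi}$, where that linkedness is available) are used, and where $\dot{\Xi}'$ must be amalgamated with all of $\dot{\Xi}^{h,\bar I}_\xi$; this is \cite[Theorem 4.3.16]{Uri}, the FAM analogue of Lemma \ref{lem_uf_const_succ}, and it is entirely absent from your proof.
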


\begin{thm}(\cite{Car23RIMS}, detailed description of Theorem \ref{thm_FAM_nonE}.)
	\label{thm_FAM_keeps_nonE}
	Let $H$ be a set of guardrails for a $\kappa$-FAM-iteration and $\p_\gamma$ be a $\kappa$-FAM-iteration with FAM-limits on $H$.
	Assume:
	\begin{itemize}
		\item $\kappa$ is uncountable regular and $\kappa<\gamma$.
		\item $H$ is complete and has size $<\kappa$.
	\end{itemize} 
	Then, $\p_\gamma$ forces\footnote{Similarly to the case of ultrafilter-limits, $\p_\gamma$ is $\kappa$-cc and hence preserves all cardinals $\geq\kappa$ (see e.g. \cite{Uri}).} $C_{[\gamma]^{<\kappa}}\lq C_\mathcal{E}$, in particular, $\non(\mathcal{E})\leq\kappa$.
\end{thm}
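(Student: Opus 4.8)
The plan is to establish the Tukey connection $C_{[\gamma]^{<\kappa}}\lq C_\mathcal{E}$ directly and then read off the conclusion via Fact~\ref{Tukey order and b and d}(2): since $\bb(C_\mathcal{E})=\non(\mathcal{E})$ and $\bb(C_{[\gamma]^{<\kappa}})=\kappa$, the connection forces $\non(\mathcal{E})=\bb(C_\mathcal{E})\leq\bb(C_{[\gamma]^{<\kappa}})=\kappa$. By the characterization of the ``inside'' relation in Fact~\ref{fac_Tukey_order_equivalence_condition} (with index set $\gamma$ and $\theta=\kappa$), it suffices to produce in $V^{\p_\gamma}$ a family $\langle \dot{x}_\alpha:\alpha<\gamma\rangle$ of reals in $2^\omega$ such that every $E\in\mathcal{E}$ contains fewer than $\kappa$ of the $x_\alpha$. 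As $\mathcal{E}$ is the $\sigma$-ideal generated by closed null sets and $\kappa$ is uncountable regular, every member of $\mathcal{E}$ is covered by countably many closed null sets, and a countable union of sets each capturing $<\kappa$ of the $x_\alpha$ again captures $<\kappa$ of them; hence it is enough to treat closed null sets $C$. This mirrors the proof of Theorem~\ref{thm_uf_limit_keeps_b_small} for $\mathbf{D}$, with $C_\mathcal{E}$ replacing $\mathbf{D}$.

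Next I would fix the reals. At each non-trivial stage $\xi\in S^+$ I let $\dot{x}_\xi\in2^\omega$ be read off from the $\qd_\xi$-generic object, arranged so that a condition $q\in\dot{Q}^\xi_{\zeta,\varepsilon}$ lying in the generic pins $x_\xi$ down on a prescribed block of coordinates; trivial stages play no role, so one works with the index set $S^+$ and uses $C_{[S^+]^{<\kappa}}\cong_T C_{[\gamma]^{<\kappa}}$ when $|S^+|=\gamma$, as is the case in the intended applications. The essential point is that membership in a closed null set is a measure-theoretic, ``all coordinates'' event, so no single condition forces escape from $C$; this is precisely why the argument must use the full strength of the fam and not merely the infinitude supplied by an ultrafilter-limit.

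For the main estimate I argue by contradiction. Suppose some $p\in\p_\gamma$ forces that a closed null set $\dot C$ contains $x_\alpha$ for all $\alpha$ in some $\dot A$ with $|\dot A|=\kappa$. First I normalise the code of $\dot C$: by grouping its defining interval partition (legitimate since the product of the local densities tends to $0$) I may assume $\dot C=\{z:\forall j\ z\on I'_j\in J_j\}$ with every local density $|J_j|/2^{|I'_j|}$ below a prescribed small $\varepsilon$. Using $|H|<\kappa\leq|\dot A|$, the regularity of $\kappa$, and a $\Delta$-system/fusion preparation exactly as in the proof of Lemma~\ref{lem_uf_const_all}, I extract a common guardrail $h\in H$ (with a common value $\varepsilon'=h_R$), an increasing sequence $\langle\xi_m:m<\omega\rangle\subseteq S^+\cap\dot A$, and conditions $q_m\leq p$ with $q_m\on\xi_m\Vdash q_m(\xi_m)\in\dot{Q}^{\xi_m}_{h(\xi_m)}$ forcing $x_{\xi_m}\in\dot C$, so that $\bar q=\langle q_m(\xi_m):m<\omega\rangle$ sequentially follows $h$ with constant $\varepsilon'$. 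Applying the increasing case of the FAM-limit property to $\bar q$ against a partition $\bar I\in\mathbb{I}_\infty$ chosen to match $\dot C$ gives, for the witnessing fam $\dot{\Xi}_\gamma^{h,\bar I}$, that $\dot{\Xi}_\gamma^{h,\bar I}\big(\{k:\dns_{I_k}(W(\bar q))\geq(1-\varepsilon')(1-\varepsilon'')\}\big)=1$ for every $\varepsilon''>0$.

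The hard part is the final measure computation, which converts this fam-density lower bound into a contradiction with ``$C$ is null''. On a fam-positive (hence infinite) set of index-intervals $I_k$, a $(1-\varepsilon')(1-\varepsilon'')$-fraction of the indices $m\in I_k$ have $q_m(\xi_m)$ in the generic; for those $m$ the real $x_{\xi_m}$ is pinned on its designated block while simultaneously $x_{\xi_m}\in\dot C$, so the block values are trapped in the corresponding small set $J_j$. Integrating the local densities $\dns_{I_k}$ against $\dot{\Xi}_\gamma^{h,\bar I}$ and comparing with the Lebesgue bound $|J_j|/2^{|I'_j|}\leq\varepsilon$ — an averaging/Fubini argument across the block and the fam — yields a numerical inequality such as $(1-\varepsilon')(1-\varepsilon'')\leq\varepsilon$, which fails once $\varepsilon,\varepsilon',\varepsilon''$ are chosen small. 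This contradiction shows each closed null set captures $<\kappa$ of the $x_\alpha$, completing the Tukey connection; the $\kappa$-cc of $\p_\gamma$ (conditions following a common guardrail are linked and $|H|<\kappa$) supplies the cardinal preservation needed to read $\non(\mathcal{E})\leq\kappa$ off the connection. I expect this measure-conversion step, together with the precise alignment of the two interval partitions (the one indexing $\bar q$ and the one coding $C$), to be where essentially all the FAM-specific work lies.
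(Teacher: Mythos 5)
You should first note that the paper itself contains no proof of Theorem \ref{thm_FAM_keeps_nonE}: it is quoted from \cite{Car23RIMS}, and the paper explicitly omits all proofs in the FAM subsection (referring to \cite{Uri}). So the comparison below is against the known argument from those sources, whose skeleton (a strongly unbounded family of reals, a $\Delta$-system plus guardrail extraction, the increasing case of the FAM-limit property, and a density count against the code of a closed null set) your proposal does correctly identify. However, two of the three places where the actual content lies are not supplied, and as written those steps would fail. First, your witnessing reals are taken from the iterand generics at non-trivial stages with a posited ``pinning'' property; in the abstract setting of a $\kappa$-FAM-iteration the iterands are arbitrary $\theta_\xi$-FAM-linked posets and provide no such reals. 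The known proof uses the Cohen reals added by the finite support iteration itself; in this framework these live at trivial stages, where the linked components are singletons, so that the guardrail (via completeness of $H$) can prescribe the exact Cohen condition -- this is what makes pinning legitimate. Second, and more seriously, the increasing case of the FAM-limit property (Definition \ref{dfn_FAM_iteration}) applies to sequences of \emph{single-coordinate} conditions and only gives fam-density of $\{m:\dot q_m\in\dot G\}$ for those coordinates; but ``$x_{\xi_m}\in\dot C$'' and ``the pinned value lies in $\dot J_{j}$'' are forced by the \emph{full} conditions $q_m$, whose root and petal coordinates must also enter the generic. You never construct the master condition (coordinatewise constant-case limits over the $\Delta$-system root, together with increasing-case applications at each petal position, intersected using finite additivity) below which fam-many full conditions lie in $\dot G_\gamma$; without it the contradiction cannot even be localized below a single condition, and your key sentence ``for those $m$ the real $x_{\xi_m}$ is pinned \dots while simultaneously $x_{\xi_m}\in\dot C$'' is unjustified.

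The final counting step also does not work as described. Your target inequality $(1-\varepsilon')(1-\varepsilon'')\leq\varepsilon$ is the right one, but ``the block values are trapped in $J_j$'' plus ``Fubini against the fam'' cannot produce it: being trapped in $J_j$ is precisely what the conditions force, and this is not contradictory by itself, because $J_j$ has small \emph{relative} density but is absolutely large, while each $I_k$ is a fixed finite set; moreover $\bar I$ cannot be ``chosen to match $\dot C$,'' since $\dot C$ is a $\p_\gamma$-name while $\bar I\in\mathbb{I}_\infty$ must be a ground-model partition. The missing idea is that \emph{you} choose the pinned values adversarially and align $\bar I$ with your own pins: after thinning so that all Cohen coordinates decide the same stem $s^*$, set $|I_k|=2^{N_k-|s^*|}$ and let $\{t_m:m\in I_k\}$ enumerate \emph{all} length-$N_k$ extensions of $s^*$. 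Then, below the master condition, for fam-almost every $k$ a $(1-c\varepsilon)$-fraction of $m\in I_k$ have $q_m\in\dot G$, and for any block $I'_j\subseteq[|s^*|,N_k)$ of the materialized code the corresponding $t_m$'s project onto at least a $(1-c\varepsilon)$-fraction of $2^{I'_j}$, forcing $|J_j|\geq(1-c\varepsilon)2^{|I'_j|}$ and contradicting $|J_j|\leq 2^{-j}2^{|I'_j|}$; one finds such a $k$ because fam-measure-one sets are infinite, so some good $k$ has $N_k$ beyond the finitely many blocks of the code that lie too low or straddle $|s^*|$. In short: the skeleton is right, but the equidistributed pinning, the matching of $|I_k|$ to $2^{N_k}$, and the root/petal master-condition construction -- which together constitute the FAM-specific work -- are absent, and the averaging mechanism you propose in their place does not yield a contradiction.
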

\subsection{The forcing poset $\widetilde{\mathbb{E}}$}

In \cite{KST}, they also introduced the forcing notion  $\widetilde{\mathbb{E}}$ instead of $\mathbb{E}$, which increases $\nonm$ as $\mathbb{E}$ does, but has FAM-limits,  which $\mathbb{E}$ does not have. (Cardona, Mej\'{\i}a and Uribe-Zapata \cite{Car23RIMS} proved that $\mathbb{E}$ increases $\non(\mathcal{E})$ and hence does not have FAM-limits.)
We give a brief explanation of $\widetilde{\mathbb{E}}$.
(See \cite{KST} for details) 
\begin{dfn}
	Let $T\subseteq\seq$ be a tree.
	\begin{enumerate}
		\item $\stem(T)$ denotes the stem of $T$, i.e., the smallest splitting node.
		\item For $s,t\in T$, $s\triangleleft t$ denotes that $t$ is an immediate successor, i.e., $t=s^\frown l$ for some $l<\omega$. Define $\Omega_s=\Omega_s(T)\coloneq\{s^\prime\in T:s^\prime\triangleright s\}$, the set of all immediate successors of $s$ in $T$. 
	\end{enumerate}
\end{dfn}

\begin{dfn}
	\begin{enumerate}
		\item We define the compact homogeneous tree $T^*\subseteq\seq$ with the empty stem $\stem(T^*)=\langle\rangle$ by induction on the height $h$.
		For $h<\omega$, let $\rho(h)\coloneq\max\{|T^*\cap\omega^h|,h+2\}$, $\pi(h)\coloneq((h+1)^2\rho(h)^{h+1})^{\rho(h)^h}$, $a(h)\coloneq\pi(h)^{h+2}$, $M(h)\coloneq a(h)^2$ and $\mu_h(n)\coloneq\log_{a(h)}\left(\displaystyle{\frac{M(h)}{M(h)-n}}\right)$ for $n\leq M(h)$ ($\mu_h(M(h))=\infty)$.
		
		For $s\in T^*\cap\omega^h$, let $\Omega_s=\Omega_s(T^*)=\{s^\frown l:l<M(h)\}$ and now $T^*$ is inductively defined. 
		\item For $s\in T^*$ and $A\subseteq \Omega_s$, define $\mu_s(A)\coloneq\mu_{|s|}(|A|)$. For a subtree $p\subseteq T^*$,
		define $\mu_s(p)=\mu_s(\{s^\prime\in p:s^\prime\triangleright s\})$ for $s\in p$.
		\item 
		The poset $\widetilde{\mathbb{E}}$ consists of all subtrees $p\subseteq T^*$ such that $\mu_s(p)\geq1+\frac{1}{|\stem(p)|}$ for all $s\in p$ with $s\supseteq\stem(p)$. $p\leq q:\Leftrightarrow p\subseteq q$.
		
	\end{enumerate}
\end{dfn}

\begin{fac}
	\label{fac_E}
	\begin{enumerate}
		\item\label{item_fac_E_a} (\cite[Lemma 1.19.(a)]{KST})  Let $h<\omega$, $s\in\omega^h$ and $\{p_i:i<\pi(h)\}\subseteq\widetilde{\mathbb{E}}$.
		If all $p_i$ share $s$ above their stems, then they have a common lower bound.

		\item \label{item_fac_E_b}(\cite[Lemma 1.19.(b)]{KST}) $\widetilde{\mathbb{E}}$ is $(\rho,\pi)$-linked,
		
		i.e.,  $\widetilde{\mathbb{E}}=\bigcup_{m<\omega}\bigcap_{i>m}\bigcup_{j<\rho(i)}Q^i_j$ where each $Q^i_j$ is $\pi(i)$-linked.

		\item \label{item_fac_E_c}(\cite[Lemma 1.19.(c)]{KST}) $\widetilde{\mathbb{E}}$ adds an eventually different real and hence increases $\nonm$.
		\item\label{item_fac_E_e} (\cite[Lemma 1.19.(e)]{KST}) $\widetilde{\mathbb{E}}$ is (forcing equivalent to) a subforcing of random forcing.
	\end{enumerate}
\end{fac}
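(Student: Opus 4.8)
The plan is to treat part~\ref{item_fac_E_a} as the combinatorial heart and to read off the remaining parts from it. The guiding observation is that $\mu_h$ is a \emph{logarithmic} measure: if at a node $t$ of height $h'$ a subtree keeps all but $r$ of the $M(h')$ immediate successors, then $\mu_t=\log_{a(h')}\bigl(M(h')/r\bigr)$, so a lower bound $\mu_t\ge c$ is equivalent to the deletion bound $r\le a(h')^{2-c}$. The decisive feature is subadditivity of deletions under intersection: intersecting $k$ subtrees deletes at each node at most the sum of the individual deletions, hence costs at most $\log_{a(h')}k$ in the value of $\mu_t$. Throughout I would keep all parameter identities in logarithmic form, since that is where the choices $M(h)=a(h)^2$ and $a(h)=\pi(h)^{h+2}$ do their work.

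For part~\ref{item_fac_E_a}, set $q\coloneq\bigcap_{i<\pi(h)}p_i$. At every node $t\supseteq s$ of height $h'\ge h$ each $p_i$ satisfies $\mu_t(p_i)\ge 1+1/|\stem(p_i)|\ge 1+1/h$, because $\stem(p_i)\subseteq s$; by subadditivity $\mu_t(q)\ge(1+1/h)-\log_{a(h')}\pi(h)$, and the identity $a(h')=\pi(h')^{h'+2}$ together with monotonicity of $\pi$ gives $\log_{a(h')}\pi(h)\le 1/(h'+2)$. This is \emph{not} enough to keep the stem at $s$, and the key trick is to lengthen the stem: pick any branch of $q$, let $s^*\supseteq s$ be the node it meets at height $2h$, and form $q^*$ by following this single branch from $s$ to $s^*$ and then taking all of $q$ above $s^*$. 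Then $|\stem(q^*)|=2h$, and for every $t\supseteq s^*$ of height $h'\ge 2h$ one checks $\mu_t(q^*)=\mu_t(q)\ge 1+1/h-1/(h'+2)\ge 1+1/(2h)=1+1/|\stem(q^*)|$, so $q^*\in\widetilde{\mathbb{E}}$ is a common lower bound. Part~\ref{item_fac_E_b} is then organization: let $Q^i_j$ collect those conditions whose stem has length $\le i$ and which pass through the $j$-th node of $T^*$ at height $i$ (there are $|T^*\cap\omega^i|\le\rho(i)$ such nodes); part~\ref{item_fac_E_a} makes each $Q^i_j$ $\pi(i)$-linked, and every $p$ lies in $\bigcup_{j<\rho(i)}Q^i_j$ for all $i>|\stem(p)|$, which is exactly the displayed decomposition.

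For part~\ref{item_fac_E_c}, the generic real $e$ is the branch determined by the stems in the generic filter. Given $p$ and $f\in\oo$ in the ground model, with $m\coloneq|\stem(p)|$, I would first lengthen the stem of $p$ to a large height $H$ (free, as above) and then delete from each node $t\supseteq\stem$ the unique successor $t^\frown f(|t|)$. Each such deletion removes a single successor and so costs at most $\log_{a(h')}2$ in $\mu_t$, which for $h'\ge H$ is far below the available slack $1/m-1/H$; the resulting $q\le p$ forces $e(n)\neq f(n)$ for all $n\ge H$, so by density $e$ is eventually different from every ground-model real, which is the standard route to increasing $\nonm$. For part~\ref{item_fac_E_e}, equip the branch space $[T^*]$ with the measure $\nu$ splitting each node uniformly among its $M(h)$ successors; the measure condition bounds the deleted fraction at a node by $a(h)^{-1-1/|\stem|}$, which is summable enough that $\nu([p])>0$ for every $p\in\widetilde{\mathbb{E}}$. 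The assignment $p\mapsto[p]$ then embeds $\widetilde{\mathbb{E}}$ densely into the measure algebra of $([T^*],\nu)$, exhibiting it as a subforcing of random forcing.

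The main obstacle is concentrated entirely in part~\ref{item_fac_E_a}: the plain intersection of $\pi(h)$ conditions really does fail the measure requirement if one insists on keeping the stem $s$, and the sole purpose of the parameter choices is to let the stem-lengthening trick close this gap with room to spare. Getting the deletion bookkeeping and the comparison $\log_{a(h')}\pi(h)\le 1/(h'+2)$ exactly right—so that the post-intersection measure clears the halved threshold $1+1/(2h)$ at \emph{every} node above the new stem—is the delicate step on which parts~\ref{item_fac_E_b}--\ref{item_fac_E_e} all rest.
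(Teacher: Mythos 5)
Two remarks before the substance: the paper itself contains \emph{no} proof of this statement --- it is quoted as a Fact from \cite[Lemma 1.19]{KST} --- so your attempt can only be measured against the argument in that reference. Your parts \eqref{item_fac_E_a}--\eqref{item_fac_E_c} are essentially correct and are indeed the intended argument: the logarithmic deletion bookkeeping, the subadditivity bound $\mu_t\bigl(\bigcap_{i<k}p_i\bigr)\geq(1+1/h)-\log_{a(h')}k$, the comparison $\log_{a(h')}\pi(h)\leq\log_{a(h')}\pi(h')=1/(h'+2)$ via $a(h')=\pi(h')^{h'+2}$ and monotonicity of $\pi$, and the stem-lengthening to height $2h$ (which works because $1/h-1/(h'+2)\geq1/(2h)$ exactly when $h'\geq2h-2$) are all sound, and \eqref{item_fac_E_b} and \eqref{item_fac_E_c} do reduce to \eqref{item_fac_E_a} in the way you describe.

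The genuine gap is in part \eqref{item_fac_E_e}. The map $p\mapsto[p]$ does \emph{not} embed $\widetilde{\mathbb{E}}$ densely into the measure algebra of $([T^*],\nu)$: a dense embedding would make $\widetilde{\mathbb{E}}$ forcing equivalent to random forcing itself, not merely to a subforcing, and it is outright false. For a counterexample, let $T\subseteq T^*$ delete exactly $\lceil a(h)^{3/2}\rceil$ of the $M(h)=a(h)^2$ successors at every node of height $h$. Then $\nu([T])\geq\prod_{h}\bigl(1-2a(h)^{-1/2}\bigr)>0$ since $a(h)$ grows so fast that $\sum_h a(h)^{-1/2}<\infty$, yet any subtree $r\subseteq T$ satisfies $\mu_u(r)\leq\log_{a(h)}\bigl(a(h)^2/a(h)^{3/2}\bigr)=1/2$ at every node $u$ of height $h$, so $[T]$ contains no $[p]$ with $p\in\widetilde{\mathbb{E}}$. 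Moreover, density is not what \eqref{item_fac_E_e} requires; what is required --- and what your proposal never verifies --- is that the map preserves \emph{incompatibility}, i.e.\ $p\perp q$ in $\widetilde{\mathbb{E}}$ implies $[p]\cap[q]$ is null. Without this, ``$p\mapsto[p]$'' need not respect the forcing structure at all, and the way the paper later uses \eqref{item_fac_E_e} (Kamburelis' $\mathbf{Lc}^*$-goodness via a strictly positive measure, in Corollary \ref{cor_smallness_for_addn_and_covn_and_nonm}) hinges precisely on this coherence between compatibility and measure.

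Fortunately the missing step follows from the same arithmetic as your part \eqref{item_fac_E_a}, applied to two trees instead of $\pi(h)$ many. Suppose $x\in[p]\cap[q]$, let $k=\max(|\stem(p)|,|\stem(q)|)$ and $u_0=x\on 2k$; then $u_0\in p\cap q$ lies above both stems, and every node $u\supseteq u_0$ of $p\cap q$ satisfies $\mu_u(p\cap q)\geq1+1/k-\log_{a(|u|)}2\geq1+1/(2k)$, since $a(|u|)\geq4^{k}$. Hence the tree consisting of the initial segments of $u_0$ together with $p\cap q$ above $u_0$ is a condition below both $p$ and $q$. Contrapositively, $p\perp q$ forces $[p]\cap[q]=\emptyset$, so $p\mapsto[p]$ is an order- and (in)compatibility-preserving isomorphism onto a subposet of the random algebra, which is exactly the assertion of \eqref{item_fac_E_e}. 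With this paragraph substituted for your ``dense embedding'' sentence, the reconstruction of the Fact is complete.
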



\begin{lem}(\cite[Lemma 1.20]{KST},\cite[Theorem 4.2.19]{Uri})\label{lem_E_FAM}
	
	For $t\in T^*$ and $\varepsilon\in(0,1)_\mathbb{Q}$, let $Q_{t,\varepsilon}$ denote the set of all $p\in\widetilde{\mathbb{E}}$ such that some natural number $m\geq2$ satisfies:
	\begin{itemize}
		\item $\stem(p)=t$ has length $>3m$.
		\item $\mu_s(p)\geq1+\frac{1}{m}$ for all $s\in p$ with $s\supseteq t$.
		\item $\frac{1}{m}\leq\varepsilon$.
	\end{itemize} 
	Then, $\langle Q_{t,\varepsilon}:t\in T^*,\varepsilon\in(0,1)_\mathbb{Q}\rangle$ witnesses
	$\widetilde{\mathbb{E}}$ is $\sigma$-FAM-linked.
\end{lem}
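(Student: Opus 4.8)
The plan is to verify directly the three clauses in the definition of $\mu$-FAM-linkedness for the family $\langle Q_{t,\varepsilon}:t\in T^*,\varepsilon\in(0,1)_\mathbb{Q}\rangle$. Since $T^*\subseteq\seq$ and $(0,1)_\mathbb{Q}$ are countable, the index set is countable, so verifying the clauses yields $\sigma$-FAM-linkedness. Throughout I write $h=|t|$ and $a=a(h)$, $M=M(h)=a^2$; the one computational reformulation I would extract from the definition of $\mu$ is that $\mu_s(p)\geq1+\tfrac1m$ is equivalent to $\Omega_s(p)$ omitting at most a fraction $a(|s|)^{-(1+1/m)}$ of the $M(|s|)$ available successors, so that the logarithmic shape of $\mu$ turns intersections into (near-)additive losses in the omitted measure.

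Density is the soft clause. Given $p\in\widetilde{\mathbb{E}}$ and $\varepsilon$, I would fix $m\geq\max\{2,\lceil1/\varepsilon\rceil,|\stem(p)|\}$, choose any branch of $p$, let $t$ be its initial segment of length $3m+1$, and let $q$ consist of the single path from $\stem(p)$ up to $t$ together with all of $p$ above $t$. Then $\stem(q)=t$ has length $>3m$ (the node $t$ still splits, since $\mu_t(p)\geq1$ forces at least two successors), and for every $s\supseteq t$ in $q$ we have $\Omega_s(q)=\Omega_s(p)$, hence $\mu_s(q)=\mu_s(p)\geq1+\tfrac{1}{|\stem(p)|}\geq1+\tfrac1m$. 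Thus $q\leq p$ and $q\in Q_{t,\varepsilon}$ (with witness $m$), so $\bigcup_{t\in T^*}Q_{t,\varepsilon}$ is dense.

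For the interior estimate $\intt(Q_{t,\varepsilon})\geq1-\varepsilon$, fix $\bar q=\langle q_0,\dots,q_{n-1}\rangle\in Q_{t,\varepsilon}^n$; all share the stem $t$ and each satisfies $\mu_s(q_i)\geq1+\tfrac1{m_i}$ with $\tfrac1{m_i}\leq\varepsilon$ and $3m_i<|t|$. I would produce $F\subseteq n$ with $|F|\geq(1-\varepsilon)n$ admitting a common lower bound via the following scheme: retain, at each node above $t$, the successors common to $\{q_i:i\in F\}$, and take a refinement $r\subseteq\bigcap_{i\in F}q_i$ with a sufficiently long stem $s^*\supseteq t$. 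By the additivity noted above, at a level-$h'$ node the omitted measure of $\bigcap_{i\in F}q_i$ is at most $\sum_{i\in F}a(h')^{-(1+1/m_i)}$, and the slack $3m_i<|t|$ (giving $\tfrac1{m_i}-\tfrac1{|t|}>\tfrac2{3m_i}$) together with the fast growth of $a(h')$ guarantees that, after deleting an $\varepsilon$-fraction of the indices, the remaining intersection stays fat enough to contain a valid condition with a long enough stem. The key counting input is that the omitted sets cannot be too spread out: an averaging over the $M$ successors of $t$ shows that no more than an $\varepsilon$-proportion of the $q_i$ can be forced apart, which is where the bound $1-\varepsilon$ (rather than a mere positive constant) comes from; propagating this down the levels is the technical core of this clause.

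The main work, and the main obstacle, is clause (i): each $Q_{t,\varepsilon}$ is $(\Xi,\bar I,\varepsilon)$-linked for every fam $\Xi$ and every $\bar I\in\mathbb{I}_\infty$. Here I would define $\lim^\Xi\colon Q_{t,\varepsilon}^\omega\to\widetilde{\mathbb{E}}$ by letting $\lim^\Xi\bar q$ be the tree that, above the common stem, retains at each node exactly the successors belonging to $\Xi$-positively-many of the $q_l$; the common-lower-bound property of Fact~\ref{fac_E} (that $\pi(h)$ conditions sharing a node above their stems are compatible) is what makes this tree a genuine condition lying below appropriately many $q_l$. I would then extend $\Xi$ to a $\widetilde{\mathbb{E}}$-name $\dot\Xi'$ of a fam that measures the set of indices whose condition contains the generic branch, and verify $\lim^\Xi\bar q\Vdash\int_\omega\dns_{I_k}(\dot W(\bar q))\,d\dot\Xi'\geq1-\varepsilon$. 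The crux is this forced integral inequality: one must show that, interval by interval, the generic branch lies in $q_l$ for a $(1-\varepsilon)$-density set of $l\in I_k$ on $\dot\Xi'$-average, which couples the interior estimate of clause (iii) with the finitely-additive-measure bookkeeping and the hypothesis $|I_k|\to\infty$. I expect essentially all the difficulty to sit in constructing $\dot\Xi'$ and controlling this integral; the combinatorics of $\widetilde{\mathbb{E}}$ enters only through the uniform measure lower bound and the compatibility statement of Fact~\ref{fac_E}.
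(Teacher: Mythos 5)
Your proposal has a genuine gap, and it sits exactly where you yourself locate ``all the difficulty'': clause (i). (Note that the paper contains no proof of this lemma to compare against --- it imports it from \cite{KST} and \cite{Uri} and explicitly omits proofs in that subsection --- so the proposal must stand on its own.) Your definition of $\lim^\Xi\bar q$ --- keep, at each node, the successors lying in $\Xi$-\emph{positively}-many $q_l$ --- is just the ultrafilter-limit construction of Lemma \ref{lem_E_cuf} with ``$X_t\in D$'' replaced by ``$\Xi(X_t)>0$'', and it does not produce FAM-limits: positivity gives no control over the interval densities $\dns_{I_k}$, which is what the linkedness requirement is about. Concretely, fix $\varepsilon<1/2$, fix $t$ and $m$ as in the definition of $Q_{t,\varepsilon}$, two successors $t'_0,t'_1$ of $t$, and any $\bar I\in\mathbb{I}_\infty$. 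Let $B\coloneq\bigcup\{I_k:k\text{ even}\}$; for $l\in B$ let $q_l$ be the tree obtained from $T^*$ (above $t$) by removing the cone above $t'_1$, and for $l\notin B$ remove instead the cone above $t'_0$. Each $q_l\in Q_{t,\varepsilon}$, since only one successor at one level is dropped, so $\mu_t(q_l)=\log_{a(|t|)}M(|t|)=2$. Let $\Xi$ be the $\{0,1\}$-valued fam induced by a non-principal ultrafilter containing the infinite set of odd numbers belonging to $B$; then $\Xi(B)=1$ while $\Xi(\{k:k\text{ even}\})=0$. For every node $s\supseteq t'_0$ we have $\{l:s\in q_l\}=B$, a $\Xi$-positive set, so the condition $r$ consisting of the full cone above $t'_0$ satisfies $r\leq\lim^\Xi\bar q$ for your limit. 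But $r$ is incompatible with $q_l$ for every $l\notin B$, so $r\Vdash\dot W(\bar q)\subseteq B$, hence $r$ forces $\dns_{I_k}(\dot W(\bar q))=0$ for every odd $k$, and therefore forces $\int_\omega\dns_{I_k}(\dot W(\bar q))\,d\dot\Xi'\leq\dot\Xi'(\{k:k\text{ even}\})=\Xi(\{k:k\text{ even}\})=0$ for \emph{any} name $\dot\Xi'$ of a fam extending $\Xi$. So no choice of $\dot\Xi'$ can witness $(\Xi,\bar I,\varepsilon)$-linkedness for your limit map; the map itself is wrong, not merely unanalyzed. A correct limit must be density-aware --- a successor survives only if the $\Xi$-integrated $I_k$-density of the set of indices containing it stays above a threshold, with the total loss $\varepsilon$ budgeted across levels (this is where $|t|>3m$ and the growth of $a(h)$ enter) --- and $\dot\Xi'$ is then built by a compactness/finite-consistency argument whose combinatorial core is that any $r$ below finitely many limits extends to a condition below $q_l$ for $(1-\varepsilon)|I_k|$-many $l\in I_k$ for suitable $k$. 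Since this is precisely the part you defer, the lemma is not proved.

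There is also a quantifier-order problem in your clause (iii) sketch: you fix $F\subseteq n$ of size $(1-\varepsilon)n$ and then intersect $\bigcap_{i\in F}q_i$ level by level from the stem. When $n$ is much larger than $M(|t|)$, conditions in $Q_{t,\varepsilon}$ can jointly omit every successor of $t$ (each may omit up to $a(|t|)^{1-1/m}$ of them), so for a badly chosen $F$ the intersection is empty just above the stem and no refinement exists. The argument must run the other way: by averaging, find a single node $s^*$ at a level $h'$ deep enough that $\pi(h')\geq n$ and $\sum_{h''\geq|t|}a(h'')^{-(1+1/m)}\leq\varepsilon$, such that $s^*\in q_i$ for at least $(1-\varepsilon)n$-many $i$; this determines $F$, and then Fact \ref{fac_E} \eqref{item_fac_E_a} (which you invoke only in clause (i)) provides the common lower bound. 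Your density argument for clause (ii) is correct.
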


Actually, $\widetilde{\mathbb{E}}$ has also closed ultrafilter-limits and this is why we say we are close to achieving an iteration with all the three limit notions. 
\begin{lem}(\cite{GKMS_CM_and_evasion})
	\label{lem_E_cuf}
	$Q=Q_{t,\varepsilon}\subseteq\widetilde{\mathbb{E}}$ is closed-ultrafilter-limit-linked for any $t\in T^*$ and $\varepsilon\in(0,1)_\mathbb{Q}$.
	Thus, $\widetilde{\mathbb{E}}$ is $\sigma$-$\Lambda_\mathrm{cuf}^\mathrm{lim}$-linked.
\end{lem}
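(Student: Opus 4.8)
The plan is to fix an arbitrary ultrafilter $D$ on $\omega$ and, by Lemma \ref{lem_suff_has_uf}, reduce the claim to showing that $Q=Q_{t,\varepsilon}$ is suff-c-$D$-lim-linked; that is, to exhibit a single map $\lim^D\colon Q^\omega\to Q$ with range \emph{inside} $Q$ (the ``closed'' requirement) satisfying $(\star)_n$ for every $n<\omega$. The natural candidate is the pointwise $D$-limit of trees: given $\bar q=\langle q_m:m<\omega\rangle\in Q^\omega$, let $\lim^D\bar q$ consist of all $s\subseteq t$ together with all $s\supseteq t$ for which $\{m<\omega:s\in q_m\}\in D$. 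Upward closure of $D$ makes this set downward closed, so it is a subtree of $T^*$, and since every $q_m$ has stem $t$, the limit has stem exactly $t$.

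The first real step is to check $\lim^D\bar q\in Q$. Fix a node $s\supseteq t$, write $E\coloneq\Omega_s(T^*)$ and $A_m\coloneq\Omega_s(q_m)\subseteq E$, and let $B\coloneq\{u\in E:\{m:u\in q_m\}\in D\}$, so that $\Omega_s(\lim^D\bar q)=B$. The key combinatorial fact is $|E\setminus B|\le\max_m|E\setminus A_m|$: if $u_0,\dots,u_k$ were distinct elements of $E\setminus B$ with $k=\max_m|E\setminus A_m|$, then, $D$ being an ultrafilter, each $\{m:u_i\notin A_m\}\in D$, and any $m$ in their nonempty intersection would satisfy $|E\setminus A_m|\ge k+1$, a contradiction. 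Hence $|B|\ge\min_m|A_m|$, and since $\mu_{|s|}$ is increasing, $\mu_s(\lim^D\bar q)\ge\min_m\mu_s(q_m)\ge 1+\tfrac{1}{m^*}$, where $m^*$ is the largest parameter admissible for the pair $(t,\varepsilon)$ (so that every member of $Q_{t,\varepsilon}$ obeys this uniform bound). Thus $\lim^D\bar q\in Q_{t,\varepsilon}$, establishing closedness.

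Next I verify $(\star)_n$. Suppose $\bar q^0,\dots,\bar q^{n-1}\in Q^\omega$ and $r\le\lim^D\bar q^j$ for all $j<n$; then $\stem(r)\supseteq t$. Since $\pi(h)\to\infty$, choose a level $h\ge|\stem(r)|$ with $\pi(h)\ge n+1$ and a node $w\in r$ at level $h$ (available because $r$ is an infinite tree), so $w$ lies above $\stem(r)$ and above each $\stem(q^j_m)=t$. As $w\in r\subseteq\lim^D\bar q^j$, the definition of the limit gives $\{m:w\in q^j_m\}\in D$ for every $j$, whence $A_w\coloneq\bigcap_{j<n}\{m:w\in q^j_m\}\in D$. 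For each $m\in A_w$ the $n+1$ conditions $r,q^0_m,\dots,q^{n-1}_m$ all contain $w$ above their stems and number at most $\pi(h)$, so by Fact \ref{fac_E}\ref{item_fac_E_a} they have a common lower bound. Therefore $A_w$ witnesses $(\star)_n$, and by Lemma \ref{lem_suff_has_uf} the map $\lim^D$ makes $Q_{t,\varepsilon}$ c-$D$-lim-linked; as $D$ was arbitrary, $Q_{t,\varepsilon}$ is c-uf-lim-linked.

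Finally, the family $\{Q_{t,\varepsilon}:t\in T^*,\varepsilon\in(0,1)_\mathbb{Q}\}$ is countable and, by Lemma \ref{lem_E_FAM}, its union is dense in $\widetilde{\mathbb{E}}$; passing to this dense (forcing-equivalent) subposet yields that $\widetilde{\mathbb{E}}$ is $\sigma$-$\Lambda^\mathrm{lim}_\mathrm{cuf}$-linked. I expect the main obstacle to be the closedness check in the second paragraph: unlike the ordinary ultrafilter-limit argument, here one must certify that the $D$-limit tree is \emph{itself} a legitimate element of $Q_{t,\varepsilon}$, which is precisely what the measure-preservation estimate $\mu_s(\lim^D\bar q)\ge\min_m\mu_s(q_m)$ secures, and this is the one point where the ultrafilter property of $D$ is genuinely used.
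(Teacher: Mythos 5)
Your proposal is correct and follows essentially the same route as the paper's proof: the same pointwise $D$-limit $\lim^D\bar q=\{s\in T^*:\{m<\omega:s\in q_m\}\in D\}$, the same verification of $(\star)_n$ of Definition \ref{dfn_suff} (find a common node $w$ of $r$ at a level $h$ above the stems with $\pi(h)\geq n+1$, observe that $\{m:w\in q^j_m\text{ for all }j<n\}\in D$, and invoke Fact \ref{fac_E}~\eqref{item_fac_E_a}), and the same reduction to uf-lim-linkedness via Lemma \ref{lem_suff_has_uf}. The one point where you genuinely diverge is the closedness check: the paper proves by induction on levels $h$ that $Y_h=\{m:q_m\cap\omega^h=\lim^D\bar q\cap\omega^h\}\in D$, so that at each node the limit's set of immediate successors \emph{coincides} with that of $D$-many $q_m$, from which the stem, membership in $\widetilde{\mathbb{E}}$, and membership in $Q_{t,\varepsilon}$ follow at once; you instead run a per-node pigeonhole estimate giving the inequality $|\Omega_s(\lim^D\bar q)|\geq\min_m|\Omega_s(q_m)|$, which suffices. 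Your uniformization of the witness parameter $m^*$ (the largest $m\geq2$ with $3m<|t|$ and $1/m\leq\varepsilon$) is exactly the right device to get the limit into $Q_{t,\varepsilon}$ itself rather than merely into $\widetilde{\mathbb{E}}$, and your final remark about passing to the dense union $\bigcup_{t,\varepsilon}Q_{t,\varepsilon}$ is if anything more careful than the paper, which leaves that step implicit.

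One small repair is needed in your counting argument: a node $s\in\lim^D\bar q$ strictly above $t$ need not belong to \emph{every} $q_m$, so as written $\max_m|E\setminus A_m|$ can degenerate to $|E|$ (when $s\notin q_m$ one has $A_m=\emptyset$) and $\min_m\mu_s(q_m)$ is undefined, making the estimate vacuous. The fix is immediate and stays within your own argument: restrict all maxima and minima to $m\in X_s\coloneq\{m:s\in q_m\}$, which lies in $D$ precisely because $s\in\lim^D\bar q$, and intersect the finitely many $D$-sets $\{m:u_i\notin A_m\}$ additionally with $X_s$; the same contradiction then yields the useful bound $\mu_s(\lim^D\bar q)\geq\min_{m\in X_s}\mu_s(q_m)\geq 1+1/m^*$. (The paper's level-by-level induction avoids this issue automatically, since it only ever compares the limit with those $q_m$ that agree with it up to level $|s|$.)
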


\begin{proof}
	Let $D$ be any ultrafilter on $\omega$.
	We define $\lim^D\colon Q^\omega\to Q$ as follows:
	For $\bar{q}=\langle q_m:m<\omega\rangle\in Q^\omega$, define $q^\infty=\lim^D\bar{q}\coloneq\{t\in T^*:X_t\in D\}$ where $X_t\coloneq\{m<\omega:t\in q_m\}$.
	Since $s\subseteq t$ implies $X_s\supseteq X_t$,
	$q^\infty$ is a subtree of $T^*$.
	Inducting on $h<\omega$, we show:
	\begin{equation}
		\label{eq_Etilde_lim_lem}
		Y_h\coloneq\{m<\omega:q^m\cap\omega^h=q^\infty\cap\omega^h\}\in D \text{ for any }h<\omega.
	\end{equation}
	$Y_0=\omega\in D$ trivially holds. Let $h<\omega$ and assume $Y_h\in D$.
	Fix $s\in q^\infty\cap\omega^h$.
	For $A\subseteq\Omega_s$,
	define $Z_A\coloneq\{m<\omega:q_m\cap\Omega_s=A\}$.
	Since $\bigcup\{Z_A:A\subseteq\Omega_s\}$ is a finite partition of $\omega$, there uniquely exists $A\subseteq\Omega_s$ such that $Z_A\in D$.
	We show $A=q^\infty\cap\Omega_s$.
	On the one hand, if $t\in A$, then $t\in q_m$ holds for any $m\in Z_A$,
	so $Z_A\subseteq X_t$ and hence $t\in q^\infty$.
	On the other hand, if $t\in q^\infty\cap\Omega_s$,
	then $X_t\in D$ and we can pick some $m\in X_t\cap Z_A$.
	$q_m$ witnesses $t\in A$.
	Thus, $A=q^\infty\cap\Omega_s$ holds.
	Let $W_s$ be the uniquely determined set $Z_A$ for $s\in q^\infty\cap\omega^h$.
	Then, $Y_h\cap\bigcap\{W_s:s\in q^\infty\cap\omega^h\}\in D$ is a subset of $Y_{h+1}$ and hence the induction is done.
	By \eqref{eq_Etilde_lim_lem}, we obtain $\stem(q^\infty)=t_0$,
	$q^\infty\in\widetilde{\mathbb{E}}$ and moreover $q^\infty\in Q$ (hence the closedness of $\lim^D$ is shown).
	To show $(\star)_n$ in Definition \ref{dfn_suff} for $n<\omega$, let $\bar{q}^j=\langle q_m^j:m<\omega\rangle\in Q^\omega$ for $j<n$ and $r\leq\lim^D\bar{q}^j$ for all $j<n$.
	Take $h<\omega$ satisfying $h>n,|\stem(r)|$ and $t\in r\cap\omega^h$.
	Since $t$ is also in each $\lim^D\bar{q}^j$, there exists $X\in D$ such that for all $m\in X$, $t\in q_m^j$ for all $j<n$.
	Thus, $\leq h(\leq\pi(h))$-many conditions $r$ and $\{q_m^j:j<n\}$ share the common node $t$ above their stems, by Fact \ref{fac_E} \eqref{item_fac_E_a} they have a common extension.
	
	Therefore, 	$\lim^D$ witnesses $Q\subseteq\widetilde{\mathbb{E}}$ is closed-$D$-limit-linked.

\end{proof}

	\section{Iteration}\label{sec_E}
	
Table \ref{table} illustrates the relationship among the limit notions, the forcing notions and the cardinal invariants.
The reason why each line in the table holds is as follows:
\begin{enumerate}
	\item By Lemma \ref{lem_PR_uf} and the fact that $\pr$ increases $\ee$, which is kept small by closed-ultrafilter-limits and FAM-limits.
	\item By Lemma \ref{lem_PR_uf} and the fact that $\pr_g$ increases $\ee_g$, which is kept small by FAM-limits.
	\item By Lemma \ref{lem_E_cuf} and \ref{lem_E_FAM}.
	\item By Theorem \ref{thm_uf_limit_keeps_b_small}, \ref{thm_Main_Lemma} and \ref{thm_FAM_keeps_nonE}.
\end{enumerate}
Thus, we naturally expect that by iterating (subforcings of) each poset we can obtain the left side of the separation constellation illustrated in Figure \ref{fig_p7}, since the smallness of $\bb$, $\eeb$ and $\non(\mathcal{E})$ is guaranteed by each limit notion.

Let us get ready for the iteration. The construction follows \cite[Section 4]{Yam24}.

\begin{table}[b]
	\caption{The kinds of the limits each forcing notion has.}\label{table} 
	\centering
	\begin{tabular}{ccccc}
		\hline
		forcing notion&  ultrafilter  & closed-ultrafilter & FAM \\
		
		\hline\hline
		
		$\mathbb{PR}$   & $	\textcolor{black}{\checkmark}$  & $\times$& $\times$ \\
		$\mathbb{PR}_g$    & $	\textcolor{black}{\checkmark}$& $	\textcolor{black}{\checkmark}$& $\times$ \\
		$\widetilde{\mathbb{E}}$    &  $	\textcolor{black}{\checkmark}$ & $	\textcolor{black}{\checkmark}$& $	\textcolor{black}{\checkmark}$\\
		
		\hline 
		keep small & $\bb$ & $\eeb$&$\non(\mathcal{E})$\\
		\hline
	\end{tabular}
	
\end{table}

\begin{dfn}
	\begin{itemize}
		\item $\R_1\coloneq\mathbf{Lc}^*$ and $\br_1\coloneq\mathbb{A}$.
		\item $\R_2\coloneq\mathbf{Cn}$ and $\br_2\coloneq\mathbb{B}$.
		\item $\R_3\coloneq\mathbf{D}$ and $\br_3\coloneq\mathbb{D}$.
		\item $\R_4\coloneq\mathbf{PR}$, $\R_4^*\coloneq\mathbf{BPR}$ and $\br_4\coloneq\mathbb{PR}$.
		\item $\R_5\coloneq C_\mathcal{E}$, $\R_5^g\coloneq \mathbf{PR}_g$ , and $\br_5^g\coloneq\pr_g$ for $g\in(\omega\setminus2)^\omega$.
		\item $\R_6\coloneq\mathbf{Mg}$ and $\br_6\coloneq\widetilde{\mathbb{E}}$.
	\end{itemize}
	Let $I\coloneq\{1,\ldots,6\}$ be the index set.
\end{dfn}

$\br_{\ind}$ is the poset which increases $\bb(\R_\ind)$ for each $\ind\in I$.
\begin{ass}
	\label{ass_card_arith_E}
	
	\begin{enumerate}
		\item $\lambda_1<\cdots<\lambda_7$ are regular uncountable cardinals.
		\item $\lambda_3=\mu_3^+$, $\lambda_4=\mu_4^+$ and $\lambda_5=\mu_5^+$ are successor cardinals and $\mu_3$ is regular.
		\item \label{item_aleph1_inacc}$\kappa<\lambda_\ind$ implies $\kappa^{\aleph_0}<\lambda_\ind$ for all $\ind\in I$.
		\item \label{item_ca_3}
		$\lambda_7^{<\lambda_6}=\lambda_7$, hence $\lambda_7^{<\lambda_\ind}=\lambda_7$ for all $\ind\in I$.

	\end{enumerate}
\end{ass}

\begin{dfn}
	Put $\gamma\coloneq\lambda_7$, the length of the iteration we shall perform. Fix $S_1\cup\cdots\cup S_6=\gamma$, a cofinal partition of $\lambda_7$ and for $\xi<\gamma$, let $\ind(\xi)$ denote the unique $\ind\in I$ such that $\xi\in S_\ind$.
\end{dfn}

\begin{ass}
	\label{ass_for_complete_guardrail_E}
	$\lambda_7\leq2^{\mu_3}$.
\end{ass}

\begin{lem}
	There exist complete sets $H$, $H^\prime$ and $H^{\prime\prime}$ of guardrails of length $\gamma=\lambda_7$ for $\lambda_3$-$\Lambda^\mathrm{lim}_\mathrm{uf}$-iteration of size $<\lambda_3$, $\lambda_4$-$\Lambda^\mathrm{lim}_\mathrm{cuf}$-iteration of size $<\lambda_4$ and $\lambda_5$-FAM-iteration of size $<\lambda_5$, respectively.  
\end{lem}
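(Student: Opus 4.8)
The plan is to reduce everything to three applications of Lemma \ref{cor_complete}, one per iteration type, taking $\mu$ to be $\mu_3,\mu_4,\mu_5$ and the associated $\kappa$ to be $\lambda_3=\mu_3^+$, $\lambda_4=\mu_4^+$, $\lambda_5=\mu_5^+$ respectively. Since Lemma \ref{cor_complete} already delivers a complete set of guardrails of length $\gamma$ and size $\leq\mu^{\aleph_0}$, the whole argument amounts to verifying its hypotheses in each case and then converting the bound $\mu_i^{\aleph_0}$ into the required bound $<\lambda_i$.

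First I would fix $i\in\{3,4,5\}$, put $\mu=\mu_i$ and $\kappa=\lambda_i$, and check the chain $\aleph_1\leq\mu_i\leq|\gamma|\leq2^{\mu_i}$ together with $\mu_i^+=\lambda_i$. The last equality is Assumption \ref{ass_card_arith_E}(2). For $\aleph_1\leq\mu_i$: as $\lambda_1<\lambda_2<\lambda_3\leq\lambda_i$ are regular uncountable we get $\lambda_i\geq\aleph_3$, hence $\mu_i\geq\aleph_2$. For $\mu_i\leq|\gamma|$: we have $|\gamma|=\lambda_7>\lambda_i>\mu_i$. For $|\gamma|\leq2^{\mu_i}$: Assumption \ref{ass_for_complete_guardrail_E} gives $\lambda_7\leq2^{\mu_3}$, and since $\lambda_3\leq\lambda_i$ forces $\mu_3\leq\mu_i$, monotonicity of cardinal exponentiation yields $\lambda_7\leq2^{\mu_3}\leq2^{\mu_i}$. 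With the hypotheses in place, Lemma \ref{cor_complete} produces a complete set of size $\leq\mu_i^{\aleph_0}$, and Assumption \ref{ass_card_arith_E}(3), applied to the cardinal $\mu_i<\lambda_i$, gives $\mu_i^{\aleph_0}<\lambda_i$. Taking $H$, $H^\prime$, $H^{\prime\prime}$ to be the sets obtained for $i=3,4,5$ then completes the proof.

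The one place that calls for a remark—and the nearest thing to an obstacle—is the FAM case $i=5$, where a guardrail lives in $\prod_{\xi<\gamma}(\theta_\xi\times(0,1)_\mathbb{Q})$ and completeness is phrased as in Definition \ref{dfn_FAM_iteration} rather than for plain functions in $\prod_{\xi<\gamma}\theta_\xi$. This causes no real difficulty: since $|(0,1)_\mathbb{Q}|=\aleph_0<\lambda_5$ and each $\theta_\xi<\lambda_5=\kappa$, every fiber $\theta_\xi\times(0,1)_\mathbb{Q}$ still has cardinality $<\kappa$, so the purely combinatorial content of Lemma \ref{cor_complete} applies verbatim after replacing the fiber $\theta_\xi$ by $\theta_\xi\times(0,1)_\mathbb{Q}$, and the resulting cardinal bound $\mu_5^{\aleph_0}$ is unaffected. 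Beyond this bookkeeping I expect no genuine obstruction.
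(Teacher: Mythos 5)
Your proposal is correct and is essentially the paper's (implicit) argument: the paper states this lemma without proof, clearly intending exactly the reduction you give, namely applying Lemma \ref{cor_complete} with $\mu=\mu_i$, $\kappa=\lambda_i=\mu_i^+$ for $i=3,4,5$, using $\lambda_7\leq2^{\mu_3}\leq2^{\mu_i}$ from Assumption \ref{ass_for_complete_guardrail_E} and then $\mu_i^{\aleph_0}<\lambda_i$ from Assumption \ref{ass_card_arith_E}(3). Your closing remark on the FAM case (replacing the fibers $\theta_\xi$ by $\theta_\xi\times(0,1)_\mathbb{Q}$, which are still of size $<\lambda_5$, without affecting the $\mu_5^{\aleph_0}$ bound) is exactly the right bookkeeping and the only point where the reduction is not literally verbatim.
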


Now we are ready to describe the iteration construction, which is the main object to analyze in this article.
\begin{con}
	\label{con_P7}
	We shall construct a ccc finite support iteration $\pst^7_\mathrm{pre}$ satisfying the following items:
	\begin{enumerate}
		\item \label{item_P7_1}
		$\pst^7_\mathrm{pre}$ is a $\lambda_3$-$\Lambda^\mathrm{lim}_\mathrm{uf}$-iteration of length $\gamma\coloneq\lambda_7$ and has $\Lambda^\mathrm{lim}_\mathrm{uf}$-limits on $H$ 
		with the following witnesses:
		\begin{itemize}
			\item $\langle\p_\xi^-:\xi<\gamma\rangle$, the complete subposets witnessing $\Lambda^\mathrm{lim}_\mathrm{uf}$-linkedness.
			\item $\bar{Q}=\langle\dot{Q}_{\xi,\zeta}:\zeta<\theta_\xi,\xi<\gamma\rangle$, the $\Lambda^\mathrm{lim}_\mathrm{uf}$-linked components.
			\item $\bar{D}=\langle \dot{D}^h_\xi:\xi\leq\gamma,~h\in H \rangle$, the ultrafilters.
			\item $S^-\coloneq S_1\cup S_2\cup S_3$, the trivial stages and $S^+\coloneq S_4\cup S_5\cup S_6$, the non trivial stages.
		\end{itemize}
		\item \label{item_P7_2}
		$\pst^7_\mathrm{pre}$ is a $\lambda_4$-$\Lambda^\mathrm{lim}_\mathrm{cuf}$-iteration and has $\Lambda^\mathrm{lim}_\mathrm{cuf}$-limits on $H^\prime$ 
		with the following witnesses:
		\begin{itemize}
			\item $\langle\p_\xi^-:\xi<\gamma\rangle$, the complete subposets witnessing $\Lambda^\mathrm{lim}_\mathrm{cuf}$-linkedness.
			\item $\bar{R}=\langle\dot{R}_{\xi,\zeta}:\zeta<\theta_\xi,\xi<\gamma\rangle$, the $\Lambda^\mathrm{lim}_\mathrm{cuf}$-linked components.
			\item $\bar{E}=\langle \dot{E}^{h^\prime}_\xi:\xi\leq\gamma,~h^\prime\in H^\prime\rangle$, the ultrafilters.
			\item $T^-\coloneq S_1\cup S_2\cup S_3\cup S_4$, the trivial stages and $T^+\coloneq S_5\cup S_6$, the non-trivial stages.
		\end{itemize}
		\item \label{item_P7_3}
		$\pst^7_\mathrm{pre}$ is a $\lambda_5$-FAM-iteration and has FAM-limits on $H^{\prime\prime}$
		with the following witnesses:
		\begin{itemize}
			\item $\langle\p_\xi^-:\xi<\gamma\rangle$, the complete subposets witnessing FAM-linkedness.
			\item $\bar{S}=\langle\dot{S}^\varepsilon_{\xi,\zeta}:\varepsilon\in(0,1)_\mathbb{Q},\zeta<\theta_\xi,\xi<\gamma\rangle$, the FAM-linked components.
			\item $\bar{\Xi}=\langle \dot{\Xi}^{h^{\prime\prime},\bar{I}}_\xi:\xi\leq\gamma,~h^{\prime\prime}\in H^{\prime\prime}, \bar{I}\in\mathbb{I}_\infty\rangle$, the finitely additive measures.
			\item $U^-\coloneq S_1\cup S_2\cup S_3\cup S_4\cup S_5$, the trivial stages and $U^+\coloneq S_6$, the non-trivial stages.
		\end{itemize}
		\item 
		\label{item_N_7}
		For each $\xi<\gamma$, $N_\xi\preccurlyeq H_\Theta$ is a submodel where $\Theta$ is a sufficiently large regular cardinal satisfying:
		\begin{enumerate}
			\item $|N_\xi|<\lambda_{\ind(\xi)}$.
			\item \label{item_sigma_closed_E}
			$N_\xi$ is $\sigma$-closed, i.e., $(N_\xi)^\omega\subseteq N_\xi$.
			
			\item 
			\label{item_N_e_E}For any $\ind\in I$, $\eta<\gamma$ and set of (nice names of) reals $A$ in $V^{\p_\eta}$ of size $<\lambda_\ind$,
			there is some $\xi\in S_\ind$ (above $\eta$) such that $A\subseteq N_\xi$.
			\item If $\ind(\xi)=4$, then $\{\dot{D}^h_\xi:h\in H \}\subseteq N_\xi$.
			\item If $\ind(\xi)=5$, then $\{\dot{D}^h_\xi:h\in H \},\{\dot{E}^{h^\prime}_\xi:h^\prime\in H^\prime \}\subseteq N_\xi$.
			\item If $\ind(\xi)=6$, then $\{\dot{D}^h_\xi:h\in H \},\{\dot{E}^{h^\prime}_\xi:h^\prime\in H^\prime \}, \{ \dot{\Xi}^{h^{\prime\prime},\bar{I}}_\xi:h^{\prime\prime}\in H^{\prime\prime}, \bar{I}\in\mathbb{I}_\infty\}\subseteq N_\xi$.
			
		\end{enumerate}
		
		\item $\p_\xi^-\coloneq\p_\xi\cap N_\xi$. 
		
		\item For each $\xi<\gamma$,
		$\p^-_\xi\Vdash\qd_\xi\coloneq\br_{\ind(\xi)}$. 
		
		($\br_5$ denotes $\br_5^g$ for some $g\in(\omega\setminus2)^\omega$ and $g$ runs through all $g\in(\omega\setminus 2)^\omega$ by bookkeeping.)
		\item $\bar{Q},\bar{R}$ and $\bar{S}$ are determined in the canonical way: at trivial stages, they consist of singletons and at non-trivial stages, they consist of $\omega$-many $\Lambda^\mathrm{lim}_\mathrm{uf}/\Lambda^\mathrm{lim}_\mathrm{cuf}/$FAM-linked components, respectively.


	\end{enumerate}

\end{con}
To begin with the conclusion, \textit{we have no idea whether Construction \ref{con_P7} is really possible}.
We first describe what separation constellation we can obtain \textit{if the construction is possible}.
Then, we analyze the construction and summarize what we can do and what we may not be able to do.
\begin{thm}
	If Construction \ref{con_P7} is possible, then $\pst^7_\mathrm{pre}$ will force for each $\ind\in I$,
	$\R_\ind\cong_T C_{[\lambda_7]^{<\lambda_\ind}}\cong_T[\lambda_7]^{<\lambda_\ind}$, in particular, $\bb(\R_\ind)=\lambda_\ind$ and $\dd(\R_\ind)=2^{\aleph_0}=\lambda_7$ (the same things also hold for $\R_4^*$ and $\R_5^g$ for $g\in(\omega\setminus2)^\omega)$.
	Moreover, by recovering GCH and applying the submodel method, we can obtain $\pst^7_\mathrm{fin}$ which gives the constellation illustrated in Figure \ref{fig_p7}, where
	$\aleph_1\leq\theta_1\leq\cdots\leq\theta_{12}$ are regular and $\theta_\cc$ is an infinite cardinal such that $\theta_\cc\geq\theta_{12}$ and $\theta_\cc^{\aleph_0}=\theta_\cc$.
\end{thm}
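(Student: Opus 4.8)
The plan is to derive, for every $\ind\in I$, the Tukey equivalence $\R_\ind\cong_T C_{[\lambda_7]^{<\lambda_\ind}}$ by proving the two halves separately: the \emph{outside} direction $\R_\ind\lq C_{[\lambda_7]^{<\lambda_\ind}}$ and the \emph{inside} direction $C_{[\lambda_7]^{<\lambda_\ind}}\lq\R_\ind$. Once both hold, Fact \ref{Tukey order and b and d} yields $\bb(\R_\ind)=\bb(C_{[\lambda_7]^{<\lambda_\ind}})=\lambda_\ind$ and $\dd(\R_\ind)=\dd(C_{[\lambda_7]^{<\lambda_\ind}})=\lambda_7$, while $C_{[\lambda_7]^{<\lambda_\ind}}\cong_T[\lambda_7]^{<\lambda_\ind}$ follows from Fact \ref{fac_suff_eq_CI_and_I}, using that each $\lambda_\ind$ is regular and $\lambda_7^{<\lambda_\ind}=\lambda_7$ by Assumption \ref{ass_card_arith_E}\eqref{item_ca_3}. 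The systems $\R_4^*$ and $\R_5^g$ are treated exactly like $\R_4$ and $\R_5$, invoking in addition the trivial Tukey map $\mathbf{BPR}\lq\mathbf{PR}$ (prediction implies bounding-prediction).

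For the inside directions I would simply feed the limit structure of Construction \ref{con_P7} into the three keeping-small theorems: the $\Lambda^\mathrm{lim}_\mathrm{uf}$-limits on $H$ give $C_{[\gamma]^{<\lambda_3}}\lq\mathbf{D}$ by Theorem \ref{thm_uf_limit_keeps_b_small}; the $\Lambda^\mathrm{lim}_\mathrm{cuf}$-limits on $H^\prime$ give $C_{[\gamma]^{<\lambda_4}}\lq\mathbf{BPR}\lq\mathbf{PR}$ by Theorem \ref{thm_Main_Lemma}; and the FAM-limits on $H^{\prime\prime}$ give $C_{[\gamma]^{<\lambda_5}}\lq C_\mathcal{E}$ by Theorem \ref{thm_FAM_keeps_nonE} (and the analogous bound for $\mathbf{PR}_g$). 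For $\R_1=\mathbf{Lc}^*$, $\R_2=\mathbf{Cn}$ and $\R_6=\mathbf{Mg}$ I would apply Corollary \ref{cor_smallness_for_addn_and_covn_and_nonm}, after checking the goodness of every iterand: since $\qd_\xi=\br_{\ind(\xi)}$ is forced over $\p^-_\xi=\p_\xi\cap N_\xi$ with $|N_\xi|<\lambda_{\ind(\xi)}$, Assumption \ref{ass_card_arith_E}\eqref{item_aleph1_inacc} makes it of size $<\lambda_{\ind(\xi)}$, while $\mathbb{B}$ and $\widetilde{\mathbb{E}}$ are subalgebras of random (Fact \ref{fac_E}\eqref{item_fac_E_e}) and $\mathbb{D},\mathbb{PR},\pr_g$ are $\sigma$-centered (Lemma \ref{lem_PR_uf}); for $\R_2$ one uses in addition that $\widetilde{\mathbb{E}}$ is $\mathbf{Cn}$-good, i.e.\ adds no random real.

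For the outside directions I would use Fact \ref{fac_Tukey_order_equivalence_condition}\eqref{item_small_equiv}, which reduces $\R_\ind\lq C_{[\lambda_7]^{<\lambda_\ind}}$ to showing that in $V^{\p_\gamma}$ every set of challenges of size $<\lambda_\ind$ is $\R_\ind$-bounded. Given such a set $F$, the $\lambda_7$-cc together with $\cf(\lambda_7)=\lambda_7>\lambda_\ind$ puts $F$ into some $V^{\p_\eta}$ with $\eta<\gamma$; by item \eqref{item_N_e_E} of Construction \ref{con_P7} there is $\xi\in S_\ind$ above $\eta$ with $F\subseteq N_\xi$, and the generic object that $\qd_\xi=\br_\ind$ adds over $\p^-_\xi$ responds every challenge of $F$ (this being the precise sense in which $\br_\ind$ increases $\bb(\R_\ind)$), the response surviving to the final extension. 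Hence $F$ is bounded. Combining the two directions gives the equivalences and thus $\bb(\R_\ind)=\lambda_\ind$, $\dd(\R_\ind)=\lambda_7$; moreover $2^{\aleph_0}=\lambda_7$, since the iteration adds $\lambda_7$ reals and a nice-name count against $\lambda_7^{<\lambda_6}=\lambda_7$ bounds it above.

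The ``moreover'' clause is obtained by passing $\pst^7_\mathrm{pre}$ through the submodel method of \cite{GKMS}: recovering GCH in the tail and separating the right (dual) side of the diagram point-symmetrically produces $\pst^7_\mathrm{fin}$ and the full constellation of Figure \ref{fig_p7}. The main obstacle is that everything here is conditional on Construction \ref{con_P7} actually being realizable. The genuinely delicate point --- and the one left open --- is that a \emph{single} finite support iteration must carry all three limit structures at once, together with submodels $N_\xi$ satisfying item \eqref{item_N_7}; keeping $\Lambda^\mathrm{lim}_\mathrm{uf}$-, $\Lambda^\mathrm{lim}_\mathrm{cuf}$- and FAM-limits simultaneously coherent (and, on the goodness side, reconciling the FAM-linked random subalgebra $\widetilde{\mathbb{E}}$ with the smallness of $\covn$) is exactly what the remainder of Section \ref{sec_E} must analyze.
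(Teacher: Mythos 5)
Your overall architecture is the same as the paper's. The paper proves the theorem as a delta over \cite[Construction 4.7]{Yam24}, and the items it verifies are exactly the ones you assemble from scratch: the inside directions from Theorems \ref{thm_uf_limit_keeps_b_small}, \ref{thm_Main_Lemma}, \ref{thm_FAM_keeps_nonE} together with goodness (Corollary \ref{cor_smallness_for_addn_and_covn_and_nonm}), the outside directions from Fact \ref{fac_Tukey_order_equivalence_condition} plus the bookkeeping/submodel item \eqref{item_N_e_E} of Construction \ref{con_P7} and the fact that each $\br_\ind$ generically responds all old challenges (for $\ind=6$ this is Fact \ref{fac_E}\eqref{item_fac_E_c}), the identification $C_{[\lambda_7]^{<\lambda_\ind}}\cong_T[\lambda_7]^{<\lambda_\ind}$ via Facts \ref{fac_suff_eq_CI_and_I} and \ref{fac_cap_V} with Assumption \ref{ass_card_arith_E}\eqref{item_ca_3}, and the submodel method of \cite{GKMS} for the ``moreover'' clause.

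There is, however, one genuine gap, at $\R_2=\mathbf{Cn}$. You correctly notice that Corollary \ref{cor_smallness_for_addn_and_covn_and_nonm}(2) does not cover the $\widetilde{\mathbb{E}}$-stages (those iterands have size up to $<\lambda_6>\lambda_2$ and are not $\sigma$-centered), but your patch --- ``$\widetilde{\mathbb{E}}$ is $\mathbf{Cn}$-good, i.e.\ adds no random real'' --- does not stand as written. First, $\mathbf{Cn}$-goodness is not equivalent to not adding a random real: goodness is the name-trapping property that plugs into the fsi preservation theorem and is what yields $C_{[\lambda_7]^{<\lambda_2}}\lq\mathbf{Cn}$; merely failing to add random reals is a single-step statement that cannot be fed into that machinery and gives no Tukey conclusion. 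Second, $\mathbf{Cn}$-goodness of $\widetilde{\mathbb{E}}$ is not established in this paper or in its cited literature, and it is not what is used. The available (and sufficient) fact, due to Kellner--Shelah--T\u{a}nasie \cite{KST}, is that there exists a Polish relational system $\R_2^\prime$ with $\R_2^\prime\lq\R_2$ such that every $(\rho,\pi)$-linked poset is $\R_2^\prime$-good; since $\widetilde{\mathbb{E}}$ is $(\rho,\pi)$-linked by Fact \ref{fac_E}\eqref{item_fac_E_b}, one gets $C_{[\lambda_7]^{<\lambda_2}}\lq\R_2^\prime\lq\R_2$, which is precisely the paper's argument. Replacing your claim by this one repairs the proof. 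A smaller gloss of the same kind: your ``analogous bound for $\mathbf{PR}_g$'' at level $\lambda_5$ also needs an explicit carrier, namely the ZFC Tukey connection $C_\mathcal{E}\lq\mathbf{PR}_g$ for $g\in(\omega\setminus2)^\omega$ (the set of functions predicted by a fixed predictor is a countable union of closed null sets in $\prod_{n<\omega}g(n)$), so that Theorem \ref{thm_FAM_keeps_nonE} transfers to $\R_5^g$; this fact is available, but it should be stated rather than waved at.
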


\begin{proof}
	Compared with $\p_\mathrm{pre}$ in \cite[Construction 4.7]{Yam24}, what we have to additionally deal with are the following:
	\begin{enumerate}
		\item Why $C_{[\lambda_7]^{<\lambda_1}}\lq\R_1$ holds: By Fact \ref{fac_E}\eqref{item_fac_E_e} and Corollary \ref{cor_smallness_for_addn_and_covn_and_nonm}.
		\item Why $C_{[\lambda_7]^{<\lambda_2}}\lq\R_2$ holds: It is known that there exists a Prs $\R_2^\prime$ such that $\R_2^\prime\lq\R_2$ and $(\rho,\pi)$-linked forcings are $\R^\prime_2$-good (\cite[Definition 2.3., Lemma 2.5.3., Lemma 2.8.3.]{KST}). Use Fact \ref{fac_E} \eqref{item_fac_E_b}. 
		\item Why $C_{[\lambda_7]^{<\lambda_5}}\lq\R_5$ holds: By Theorem \ref{thm_FAM_keeps_nonE}.
		\item Why $\R_6\lq C_{[\lambda_7]^{<\lambda_6}}$ holds: By Fact \ref{fac_E} \eqref{item_fac_E_c}. 
	\end{enumerate}
	
\end{proof}

Let us analyze Construction \ref{con_P7}.
Actually we can realize the successor steps.

\textit{Successor step.} If $\ind(\xi)\leq5$, we are in a similar case to \cite[Construction 4.7]{Yam24} since $\xi$ is a trivial stage for the FAM-iteration and we do not have to think about fams.
Thus, we may assume that $\ind(\xi)=6$.
$\br_6=\widetilde{\mathbb{E}}$ is $\sigma$-$\Lambda_\mathrm{cuf}^\mathrm{lim}$-linked by Lemma \ref{lem_E_cuf}, so we can extend the ultrafilters $\bar{D}$ and $\bar{E}$.
By Lemma \ref{lem_E_FAM} and Lemma \ref{lem_fam_const_all}, we can also extend the fams $\bar{\Xi}$.
Also, we can take a submodel $N_\xi$ since $|H^{\prime\prime}\times\mathbb{I}_\infty|<\lambda_6$.

However, we have a problem at limit steps, not on fams, but on ultrafilters.

\textit{Limit step.}
For fams $\bar{\Xi}$, we can directly apply Lemma \ref{lem_fam_const_all}.
For ultrafilters $\bar{D}$ and $\bar{E}$, recall that when constructing the names of ultrafilters at limit steps in the proof of Lemma \ref{lem_uf_const_all}, we resorted to the \textit{centeredness}, which $\widetilde{\mathbb{E}}$ \textit{does not have}.
We have no idea on how to overcome this problem without resorting to centeredness.\footnote{Goldstern, Kellner, Mej\'{\i}a and Shelah \cite{GKMS_CM_and_evasion} first stated that they added the evasion number $\ee$ to Cicho\'n's maximum by performing an iteration with both ultrafilter-limits and FAM-limits, but later found a gap in their proof, precisely in this point.}.

\begin{figure}
	\centering
	\begin{tikzpicture}
		\tikzset{
			textnode/.style={text=black}, 
		}
		\tikzset{
			edge/.style={color=black, thin, opacity=0.4}, 
		}
		\newcommand{\w}{2.4}
		\newcommand{\h}{2.0}
		
		\node[textnode] (addN) at (0,  0) {$\addn$};
		\node (t1) [fill=gray!30, draw, text=black, circle,inner sep=1.0pt] at (-0.25*\w, 0.8*\h) {$\theta_1$};
		
		\node[textnode] (covN) at (0,  \h*3) {$\covn$};
		\node (t2) [fill=gray!30, draw, text=black, circle,inner sep=1.0pt] at (0.15*\w, 3.3*\h) {$\theta_2$};

		\node[textnode] (addM) at (\w,  0) {$\cdot$};
		\node[textnode] (b) at (\w,  1.3*\h) {$\bb$};
		\node (t3) [fill=gray!30, draw, text=black, circle,inner sep=1.0pt] at (0.68*\w, 1.2*\h) {$\theta_3$};
		
		\node[textnode] (nonM) at (\w,  \h*3) {$\nonm$};
		\node (t6) [fill=gray!30, draw, text=black, circle,inner sep=1.0pt] at (1.35*\w, 3.3*\h) {$\theta_6$};
		
		\node[textnode] (covM) at (\w*2,  0) {$\covm$};
		\node (t7) [fill=gray!30, draw, text=black, circle,inner sep=1.0pt] at (1.65*\w, -0.3*\h) {$\theta_7$};
		
		\node[textnode] (d) at (\w*2,  1.7*\h) {$\dd$};
		\node (t10) [fill=gray!30, draw, text=black, circle,inner sep=1.0pt] at (2.32*\w, 1.8*\h) {$\theta_{10}$};
		\node[textnode] (cofM) at (\w*2,  \h*3) {$\cdot$};

		\node[textnode] (nonN) at (\w*3,  0) {$\nonn$};
		\node (t11) [fill=gray!30, draw, text=black, circle,inner sep=1.0pt] at (2.85*\w, -0.3*\h) {$\theta_{11}$};
		
		\node[textnode] (cofN) at (\w*3,  \h*3) {$\cofn$};
		\node (t12) [fill=gray!30, draw, text=black, circle,inner sep=1.0pt] at (3.25*\w, 2.2*\h) {$\theta_{12}$};
		
		\node[textnode] (aleph1) at (-\w,  0) {$\aleph_1$};
		\node[textnode] (c) at (\w*4,  \h*3) {$2^{\aleph_0}$};
		\node (t10) [fill=gray!30, draw, text=black, circle,inner sep=1.0pt] at (3.67*\w, 3.4*\h) {$\theta_\cc$};
		
		\node[textnode] (e) at (0.5*\w,  1.7*\h) {$\mathfrak{e}$}; 
		\node[textnode] (estar) at (\w,  1.7*\h) {$\ee^*$};
		\node (t4) [fill=gray!30, draw, text=black, circle,inner sep=1.0pt] at (0.15*\w, 1.7*\h) {$\theta_4$};

		\node[textnode] (pr) at (2.5*\w,  1.3*\h) {$\mathfrak{pr}$}; 
		\node[textnode] (prstar) at (\w*2,  1.3*\h) {$\mathfrak{pr}^*$};
		\node (t9) [fill=gray!30, draw, text=black, circle,inner sep=1.0pt] at (2.85*\w, 1.3*\h) {$\theta_9$};
		
		\node[textnode] (eubd) at (\w*0.2,  \h*2.1) {$\ee_{ubd}$};
		\node[textnode] (nonE) at (\w*0.75,  \h*2.55) {$\non(\mathcal{E})$};
		\node (t5) [fill=gray!30, draw, text=black, circle,inner sep=1.0pt] at (0.5*\w, 2.1*\h) {$\theta_5$};
		
		\node[textnode] (prubd) at (\w*2.8,  \h*0.9) {$\mathfrak{pr}_{ubd}$};
		\node[textnode] (covE) at (\w*2.25,  \h*0.45) {$\cov(\mathcal{E})$};
		\node (t8) [fill=gray!30, draw, text=black, circle,inner sep=1.0pt] at (2.47*\w, 0.9*\h) {$\theta_8$};

		\draw[->, edge] (addN) to (covN);
		\draw[->, edge] (addN) to (addM);
		\draw[->, edge] (covN) to (nonM);	
		\draw[->, edge] (addM) to (b);
		\draw[->, edge] (addM) to (covM);
		\draw[->, edge] (nonM) to (cofM);
		\draw[->, edge] (d) to (cofM);
		\draw[->, edge] (b) to (prstar);
		\draw[->, edge] (covM) to (nonN);
		\draw[->, edge] (cofM) to (cofN);
		\draw[->, edge] (nonN) to (cofN);
		\draw[->, edge] (aleph1) to (addN);
		\draw[->, edge] (cofN) to (c);
		
		\draw[->, edge] (e) to (covM);
		\draw[->, edge] (addN) to (e);
		
		\draw[->, edge] (covM) to (prstar);
		\draw[->, edge] (nonM) to (pr);
		\draw[->, edge] (pr) to (cofN);
		
		\draw[->, edge] (e) to (estar);
		\draw[->, edge] (b) to (estar);C
		\draw[->, edge] (estar) to (nonM);
		\draw[->, edge] (estar) to (d);
		\draw[->, edge] (e) to (eubd);
		
		\draw[->, edge] (prstar) to (d);
		\draw[->, edge] (prstar) to (pr);
		
		\draw[->, edge] (prstar) to (pr);
		\draw[->, edge] (prubd) to (pr);
		\
		
		\draw[->, edge] (eubd) to (nonE);
		\draw[->, edge] (addM) to (nonE);
		\draw[->, edge] (nonE) to (nonM);
		
		\draw[->, edge] (covE) to (prubd);
		\draw[->, edge] (covE) to (cofM);
		\draw[->, edge] (covM) to (covE);
		
		\draw[black,thick] (-0.5*\w,1.5*\h)--(3.5*\w,1.5*\h);
		\draw[black,thick] (1.5*\w,-0.5*\h)--(1.5*\w,3.5*\h);
		
		\draw[black,thick] (-0.5*\w,-0.5*\h)--(-0.5*\w,3.5*\h);
		\draw[black,thick] (3.5*\w,-0.5*\h)--(3.5*\w,3.5*\h);
		
		\draw[black,thick] (0.5*\w,-0.5*\h)--(0.5*\w,1.5*\h);
		\draw[black,thick] (2.5*\w,1.5*\h)--(2.5*\w,3.5*\h);
		
		\draw[black,thick] (-0.1*\w,1.9*\h)--(1.5*\w,1.9*\h);
		\draw[black,thick] (-0.1*\w,2.7*\h)--(1.5*\w,2.7*\h);
		\draw[black,thick] (-0.1*\w,1.5*\h)--(-0.1*\w,2.7*\h);
		\draw[black,thick] (0.5*\w,2.7*\h)--(0.5*\w,3.5*\h);
		
		\draw[black,thick] (3.1*\w,1.1*\h)--(1.5*\w,1.1*\h);
		\draw[black,thick] (3.1*\w,0.3*\h)--(1.5*\w,0.3*\h);
		\draw[black,thick] (3.1*\w,1.5*\h)--(3.1*\w,0.3*\h);
		\draw[black,thick] (2.5*\w,0.3*\h)--(2.5*\w,-0.5*\h);

		\draw[->, edge] (nonE) to (nonN);
		\draw[->, edge] (covN) to (covE);

	\end{tikzpicture}
	\caption{Constellation of $\p^7_\mathrm{fin}$, if the construction is possible.}\label{fig_p7}
\end{figure}

\end{document}